\newcommand{\cA}{\mathcal{A}}
\newcommand{\bC}{\mathbf{C}}
\newcommand{\bD}{\mathbf{D}}
\newcommand{\rD}{\mathrm{D}}
\newcommand{\bG}{\mathbf{G}}
\newcommand{\bM}{\mathbf{M}}
\newcommand{\bO}{\mathbf{O}}
\newcommand{\bP}{\mathbf{P}}
\newcommand{\bQ}{\mathbf{Q}}
\newcommand{\cR}{\mathcal{R}}
\newcommand{\bS}{\mathbf{S}}
\newcommand{\cS}{\mathcal{S}}
\newcommand{\cU}{\mathcal{U}}
\newcommand{\rU}{\mathrm{U}}
\newcommand{\cV}{\mathcal{V}}
\newcommand{\bZ}{\mathbf{Z}}
\newcommand{\fa}{\mathfrak{a}}
\newcommand{\fp}{\mathfrak{p}}
\newcommand{\fq}{\mathfrak{q}}
\newcommand{\fs}{\mathfrak{s}}
\newcommand{\bw}{\mathbf{w}}
\renewcommand{\phi}{\varphi}
\renewcommand{\emptyset}{\varnothing}
\renewcommand{\tilde}[1]{\widetilde{#1}}
\newcommand{\ol}[1]{\overline{#1}}
\newcommand{\ul}[1]{\underline{#1}}
\newcommand{\arxiv}[1]{\href{http://arxiv.org/abs/#1}{{\tt arXiv:#1}}}
\def\Ddots{\mathinner{\mkern1mu\raise\p@
\vbox{\kern7\p@\hbox{.}}\mkern2mu
\raise4\p@\hbox{.}\mkern2mu\raise7\p@\hbox{.}\mkern1mu}}
\DeclareMathOperator{\rad}{rad} 
\DeclareMathOperator{\srad}{srad}
\DeclareMathOperator{\trace}{Tr}
\DeclareMathOperator{\diag}{diag}
\DeclareMathOperator{\rank}{rank}
\DeclareMathOperator{\Sym}{Sym}
\DeclareMathOperator{\Aut}{Aut}
\DeclareMathOperator{\Tor}{Tor}
\DeclareMathOperator{\Spec}{Spec}
\DeclareMathOperator{\sgn}{sgn}
\DeclareMathOperator{\len}{len}
\DeclareMathOperator{\Mod}{Mod}
\newcommand{\GL}{\mathbf{GL}}
\newcommand{\SL}{\mathbf{SL}}
\newcommand{\Sp}{\mathbf{Sp}}
\newcommand{\Gr}{\mathbf{Gr}}
\newcommand{\fgl}{\mathfrak{gl}}
\newcommand{\noarticle}{%
  \removelastskip%
  \vskip.6\baselineskip%
  \par}
\newcounter{article}
\newcommand{\article}[1][-]{%
  \setcounter{equation}{0}%
  \removelastskip%
  \vskip.6\baselineskip%
  \refstepcounter{article}%
  \noindent%
  {\bf (\thearticle)} %
  \ifx-#1{}\else{\it #1.}\fi \phantomsection}
\def\ifempty#1{\if&#1&}
\def\Thmtpl#1#2#3#4{%
  \removelastskip%
  \vskip.6\baselineskip%
  \refstepcounter{article}%
  \noindent%
  {\bf (\thearticle)} %
  \bgroup{}#2{}#1%
  \ifempty{#4}\else{ (#4)}\fi%
  .\egroup%
  #3%
  \ifempty{#4}\ \fi
\phantomsection }
\def\endthm{\par\vskip.4\baselineskip}
\def\thmtpl#1#2#3#4{%
  \removelastskip%
  \par\vskip.4\baselineskip%
  \addtocounter{article}{-1}
  \refstepcounter{article}
  \noindent%
  \bgroup#2#1%
  \ifempty{#4}\else{ (#4)}\fi%
  .\egroup%
  #3%
  \ifempty{#4}\ \fi
\phantomsection
}
\def\newtheoremenvironment#1#2#3#4#5{%
  \newenvironment{#1}[1][]{\Thmtpl{#3}{#4}{#5}{##1}}{\endthm}%
  \newenvironment{#2}[1][]{\thmtpl{#3}{#4}{#5}{##1}}{\endthm}%
  }
\renewenvironment{proof}[1][Proof]{
  \removelastskip%
  \par\vskip.4\baselineskip\noindent%
  \topsep=0pt%
  \partopsep=0pt%
  \labelsep=0pt%
  \begin{trivlist}%
  \item[\emph{#1.} ]%
  \pushQED{\qed} 
}{
\popQED%
  \end{trivlist}%
  \vskip.4\baselineskip%
}
\newcommand{\xsection}[1]{%
  \section{#1}
  \setcounter{article}{0}
  \renewcommand{\thearticle}{\thesection.\arabic{article}}%
  }
\renewcommand{\subsection}[1]{%
  \refstepcounter{subsection}%
  \addcontentsline{toc}{subsection}{\hskip 4.2ex\thesubsection{}. #1}%
  \removelastskip%
  \vskip.6\baselineskip%
  \noindent%
  \thesubsection{}. {\bf #1}%
  \leavevmode%
  \setcounter{article}{0}
  \renewcommand{\thearticle}{\thesubsection.\arabic{article}}%
  \nopagebreak%
  }
\let\mf\mathfrak
\let\mc\mathcal
\let\mb\mathbf
\let\mr\mathrm
\let\wt\widetilde
\let\ol\overline
\let\ul\underline
\let\lbb\llbracket
\let\rbb\rrbracket
\newcommand{\pref}[1]{{\bf (}\ref{#1}{\bf )}}
\newcommand{\lw}{{\textstyle \bigwedge}}
\newcommand{\Or}{\mathrm{Or}}
\newcommand{\op}{\mathrm{op}}
\newcommand{\gfin}{\mathrm{gf}}
\newcommand{\pol}{\mathrm{pol}}
\newcommand{\fin}{\mathrm{f}}
\renewcommand{\Vec}{\mathrm{Vec}}
\renewcommand{\fs}{\mathrm{(fs)}}
\newcommand{\uotimes}{\, \ul{\otimes} \,}
\newcommand{\db}{(\mathrm{db})}
\newcommand{\ub}{(\mathrm{ub})}
\newcommand{\tors}{\mathrm{tors}}
\renewcommand{\bw}[1]{{\textstyle \bigwedge}^{#1}}
\DeclareMathOperator{\Ind}{Ind}
\DeclareMathOperator{\Hom}{Hom}
\DeclareMathOperator{\Rep}{Rep}
\DeclareMathOperator{\Fun}{Fun}
\newcommand{\Br}{\mathrm{Br}}
\newcommand{\HS}{\mathrm{HS}}
\newcommand{\VS}{\mathrm{VS}}
\title{Introduction to twisted commutative algebras}
\author{Steven V Sam}
\address{Department of Mathematics, University of California, Berkeley, CA}
\email{svs@math.berkeley.edu}
\author{Andrew Snowden}
\address{Department of Mathematics, MIT, Cambridge, MA}
\email{asnowden@math.mit.edu}
\date{September 21, 2012}
\thanks{S.~Sam was supported by an NDSEG fellowship and a Miller research fellowship. A.~Snowden was partially supported by NSF fellowship DMS-0902661.}
\subjclass[2010]{%
05E10, 
13A50, 
18D10, 
20C30, 
20G05
}
\begin{document}

\maketitle

\begin{abstract}
This article is an expository account of the theory of twisted commutative algebras, which simply put, can be thought of as a theory for handling commutative algebras with large groups of linear symmetries.  Examples include the coordinate rings of determinantal varieties, Segre--Veronese embeddings, and Grassmannians. The article is meant to serve as a gentle introduction to the papers of the two authors on the subject, and also to point out some literature in which these algebras appear. The first part reviews the representation theory of the symmetric groups and general linear groups. The second part introduces a related category and develops its basic properties. The third part develops some basic properties of twisted commutative algebras from the perspective of classical commutative algebra and summarizes some of the results of the authors. We have tried to keep the prerequisites to this article at a minimum. The article is aimed at graduate students interested in commutative algebra, algebraic combinatorics, or representation theory, and the interactions between these subjects.
\end{abstract}

\setcounter{tocdepth}{2}
\tableofcontents

\section*{Introduction}
\setcounter{article}{0}
\renewcommand{\thearticle}{\arabic{article}}

\article
\label{i1}
There are many important examples in commutative algebra where the objects of interest (commutative rings, modules, free resolutions, etc.) form a sequence indexed by the positive integers and where the general linear group $\GL(n)$ acts on the $n$th object in the sequence.  Such examples often arise by applying a natural construction to a vector space of dimension $n$.  Some examples include the Grassmannian of $k$-planes in $n$-space (for $k$ fixed), the $k$th Veronese embedding of the projective space $\bP(\bC^n)$ (for $k$ fixed), the space symmetric $n \times n$ matrices of rank $\le k$ (for $k$-fixed) and the tangent and secant varieties to these examples.  In such situations, one is usually interested in understanding the behavior for all values of $n$ in a uniform manner.  The theory of twisted commutative algebras (tca's) offers a framework for doing exactly this.

\article
\label{i2}
There are several equivalent ways to define tca's.  We give two definitions now, and a third one later in the introduction.
\begin{itemize}
\item Definition 1:  A rule which associates to each vector space $V$ a commutative ring $A(V)$, and to each linear map of vector spaces $V \to V'$ a ring homomorphism $A(V) \to A(V')$.
\item Definition 2:  A commutative ring $A$ equipped with an action of the infinite general linear group $\GL(\infty)=\bigcup_{n \ge 1} \GL(n)$ by algebra homomorphisms.
\end{itemize}
In each definition there is a technical condition (polynomiality) which is required; this is discussed in the body of the paper.  The connection of Definition~1 to the discussion of \pref{i1} is clear.  For instance, letting $A(V)$ be the projective coordinate ring of the Grassmannian $\Gr(k, V)$ (for some fixed $k$) defines such a rule, and is thus an example of a tca.  The $k=1$ case is particularly simple:  in this case, $A(V)$ is just $\Sym(V)$, the symmetric algebra on $V$.  We denote this particular tca by $\Sym(\bC\langle 1 \rangle)$.  Getting to Definition~2 from Definition~1 is easy:  simply evaluate on the vector space $\bC^{\infty}=\bigcup_{n \ge 1} \bC^n$.  Thus, in Definition~2, the tca $\Sym(\bC\langle 1 \rangle)$ is $\Sym(\bC^{\infty})=\bC[x_1, x_2, \ldots]$.

\article
\label{i3}
There is a notion of ``module'' over a tca $A$.  There are two definitions, matching the two definitions for tca's:
\begin{itemize}
\item Definition 1:  A rule which associates to each vector space $V$ an $A(V)$-module $M(V)$, and to each linear map of vector spaces $V \to V'$ a map of $A(V)$-modules $M(V) \to M(V')$.
\item Definition 2: An $A$-module equipped with a compatible action of $\GL(\infty)$.
\end{itemize}
Again, there is a technical condition which is required.  Let us give a basic example of a module.  Let $A$ be the tca (in Definition~1) such that $A(V)=\Sym(\Sym^2(V))$ is the projective coordinate ring of $\bP(\Sym^2(V))$.  Inside of $\bP(\Sym^2(V))$ is the image of the second Veronese embedding of $\bP(V)$.  This is defined by a $\GL(V)$-stable ideal $I(V) \subset A(V)$, and thus $I$ is an $A$-module.  In Definition~2, we can identify $A$ with the polynomial ring $\bC[x_{ij}]$, with $1 \le i,j \le \infty$ and $x_{ij}=x_{ji}$, and $I$ is the ideal generated by $x_{ij}x_{k\ell}-x_{i\ell}x_{kj}$ for all $i$, $j$, $k$ and $\ell$.

\article
\label{i4}
From the point of view of commutative algebra, it is interesting to study projective resolutions of modules over tca's.  Let us now give an explicit example of a resolution.  Let $A$ be the tca $\Sym(\bC\langle 1 \rangle)$ and let $M$ be the $A$-module $\bC$, that is, $M(V)$ is the $A(V)$-module $\bC$ for all $V$.  Let $x_1, \dots, x_n$ be a basis for $V$. When $n=1$ the resolution of $M(V)$ over $A(V)$ is simply
\begin{displaymath}
0 \to A(V) \xrightarrow{\cdot x_1} A(V).
\end{displaymath}
The cases $n=2$ and $n=3$ become more interesting:
\[
0 \to A(V) \xrightarrow{\tiny \begin{pmatrix} -x_2 \\ x_1 \end{pmatrix}} A(V)^2 \xrightarrow{\tiny \begin{pmatrix} x_1 & x_2 \end{pmatrix}} A(V)
\]
\[
0 \to A(V) \xrightarrow{\tiny \begin{pmatrix} x_1 \\ x_2 \\ x_3 \end{pmatrix}} A(V)^3 \xrightarrow{\tiny \begin{pmatrix} 0 & x_3 & -x_2 \\ -x_3 & 0 & x_1 \\ x_2 & -x_1 & 0 \end{pmatrix}} A(V)^3 \xrightarrow{\tiny \begin{pmatrix} x_1 & x_2 & x_3 \end{pmatrix}} A(V).
\]
Writing the resolution explicitly for larger $n$ can be quite cumbersome. A basis-free way to write down the differentials makes use of the exterior powers $\lw^d{V}$, which we now explain. The space $\lw^d{V}$ spanned by symbols of the form $v_1 \wedge \cdots \wedge v_d$ for $v_i \in V$ subject to bilinearity and skew-symmetric relations. There is a natural map
\begin{align*}
\lw^d V &\to V \otimes \lw^{d-1} V\\
v_1 \wedge \cdots \wedge v_d &\mapsto \sum_{i=1}^d (-1)^i v_i \otimes v_1 \wedge \cdots \hat{v_i} \cdots \wedge v_d
\end{align*}
where $\hat{v_i}$ means that we have omitted $v_i$. One can check that it is well-defined and that it respects the natural action of $\GL(V)$ on the left and right hand sides, i.e., it is {\bf equivariant}. There is a natural way to extend this to a map 
\begin{align*}
\Sym(V) \otimes \lw^d V &\to \Sym(V) \otimes \lw^{d-1} V\\
p(v) \otimes v_1 \wedge \cdots \wedge v_d &\mapsto \sum_{i=1}^d (-1)^i v_ip(v) \otimes v_1 \wedge \cdots \hat{v_i} \cdots \wedge v_d.
\end{align*}
With suitable choices of bases for our matrices above, we can now rewrite the complexes for $n=1,2,3$ as 
\begin{align*}
0 \to A(V) \otimes V \to A(V),\\
0 \to A(V) \otimes \lw^2 V \to A(V) \otimes V \to A(V),\\
0 \to A(V) \otimes \lw^3 V \to A(V) \otimes \lw^2 V \to A(V) \otimes V \to A(V),
\end{align*}
where the differentials are the maps that we have just explained. Now note that $\lw^d V = 0$ if $d>n$.  It follows that we have a resolution of $\bC$ valid for any value of $n=\dim{V}$ as follows:
\[
\cdots \to A(V) \otimes \lw^d V \to A(V) \otimes \lw^{d-1} V \to \cdots \to A(V) \otimes \lw^2 V \to A(V) \otimes V \to A(V).
\]
This is the {\bf Koszul complex}.  In fact, it is a resolution of $\bC$ in the sense of tca's.  That is, if we let $F_i(V)=A(V) \otimes \lw^i{V}$ then $F_i$ is a module over the tca $A$, the differentials $F_i \to F_{i-1}$ appearing in the above complex are maps of $A$-modules and the complex $F_{\bullet}$ has homology only in degree 0, where it is $\bC$.

\article
There are two features of the above resolution that are worth pointing out:
\begin{itemize}
\item When the resolution $F_{\bullet}$ of the $A$-module $\bC$ is specialized to $V$ (i.e., when we evaluate on $V$), we obtain the resolution $F_{\bullet}(V)$ of the $A(V)$-module $\bC$.
\item Although $\bC$ admits a finite length resolution over $A(V)$ for each $V$, the resolution of $\bC$ over $A$ is infinite.
\end{itemize}
This behavior is typical, and holds in most circumstances.  The first point, though somewhat trivial, is very powerful, as it allows us to study the resolutions of all the modules $M(V)$ at once.  For instance, by studying the resolution of the module $I$ from \pref{i3}, one is actually studying all resolutions of second Veronese embeddings simultaneously.  The second point is the source of much difficulty in studying resolutions over tca's:  one can almost never see the full picture by specializing to finite dimensional vector spaces.

\article
The resolution of the residue field discussed in \pref{i4} is fairly trivial.  However, resolving other finite length modules over $\Sym(\bC\langle 1 \rangle)$ is not at all easy and has turned out to be very important:  the pure resolutions constructed in \cite{efw} are exactly of this kind.  Pure resolutions are important because their graded Betti tables are the extremal rays in the cone of all graded Betti tables, so they can be thought as fundamental building blocks. That paper did not use the language of tca's, but it very much used the same point of view in an essential manner.  We elaborate on this in \S\ref{ss:efw}.

\article
There is a notion for an $A$-module $M$ to be finitely generated.  To explain this, we take the view of Definition~2.  Given an element $m$ of $M$, there is a smallest submodule $\langle m \rangle \subset M$ containing $m$.  Here, we use the word ``submodule'' in the sense of tca's:  it is required to be stable under the action of $\GL(\infty)$.  Explicitly, $\langle m \rangle$ is obtained by first taking the $\GL(\infty)$-representation generated by $m$ and then the $A$-submodule of $M$ (in the usual sense) generated by this space.  The module $M$ is finitely generated if there are finitely many elements $m_1, \ldots, m_n$ such that $M=\sum_{i=1}^n \langle m_i \rangle$.  For example, the module $I$ constructed in \pref{i3} is finitely generated:  in fact, it is generated by any non-zero element of degree 2.  Note, however, that in the usual sense, i.e., without the $\GL(\infty)$ action, $I$ is not finitely generated!  That fact that very large objects can be regarded as ``finitely generated'' is one of the useful points of view that tca's offer.

\article
One of the first indications that the theory of tca's is interesting is that many tca's of interest are noetherian.  (If it were not for this, there would probably not be much to say about tca's!)  This can lead to interesting finiteness results ``for free'':  for instance, if $A$ is noetherian and $M$ is a finitely generated $A$-module then $\Tor^A_p(M, \bC)$ is a finite length representation of $\GL(\infty)$; this implies that there are finitely many ``universal'' $p$-syzygies which generate the module of $p$-syzygies of $M(V)$, for any $V$.  One of the most important open problems in the general theory of tca's is whether or not all finitely generated tca's are noetherian.  For instance, we have no idea if the tca $\Sym(\Sym^3(\bC^{\infty}))$ is noetherian or not.

\article
As promised, we now give a third equivalent definition of tca's:
\begin{itemize}
\item Definition 3:  An associative unital graded ring $A=\bigoplus_{n \ge 0} A_n$ equipped with an action of $S_n$ on $A_n$ which satisfies the following ``twisted'' version of the commutativity axiom:  for $x \in A_n$ and $y \in A_m$ we have $yx=\tau(xy)$, where $\tau \in S_{n+m}$ interchanges $1, \ldots, n$ and $n+1, \ldots, n+m$.
\end{itemize}
It is not at all obvious how this definition relates to the other two.  The link is through Schur--Weyl duality, which relates the representation theory of general linear and symmetric groups.

\article
A basic example of a tca from the point of view of Definition~3 is the tensor algebra on a vector space.  Precisely, let $U$ be a finite dimensional vector space and put $A_n=U^{\otimes n}$.  There is a multiplication map $A_n \otimes A_m \to A_{n+m}$ defined by concatenating tensors.  This gives $A=\bigoplus_{n \ge 0} A_n$ the structure of an associative unital algebra.  This algebra is highly non-commutative, but obviously satisfies the twisted commutativity axiom.  This point of view on the tensor algebra can be extremely useful, as it allows one to regard it as a commutative algebra (this perspective is used in \cite{gs}).  In Definition~2, the tca $A$ is given as $\Sym(U \otimes \bC^{\infty})$.

\article
To deal with the many points of view of tca's and their modules, we adopt the very convenient language of tensor categories.  A tensor category (for us) is an abelian category equipped with a (symmetric) tensor functor.  Each tensor category provides an entire world where all the basic definitions of commutative algebra (though not necessarily all of the results) hold.  The simplest tensor category is the category of vector spaces; algebras in this category are algebras in the usual sense, and so the theory of commutative algebra afforded by this category is the usual theory.  The category of graded vector spaces is a tensor category, and its algebras are graded algebras.  In this paper, we are primarily interested in the following three tensor categories:
\begin{itemize}
\item The category of polynomial functors of vector spaces.
\item The category of polynomial representations of $\GL(\infty)$.
\item The category of sequences $(A_n)_{n \ge 0}$ where $A_n$ is a representation of the symmetric group $S_n$.
\end{itemize}
The definitions 1--3 of tca's are actually just the definitions of algebras in the above three categories.  In fact, the above three categories are equivalent, and this is why the three definitions of tca's are equivalent.  However, the equivalence of the categories shows more:  it means that all the basic definitions one can make for tca's, such as ``module'' or ``ideal,'' can be canonically transferred between the three points of view.

\article
In many instances, we prefer not to think about which point of view we are taking towards tca's.  We therefore introduce an abstract tensor category, which we name $\cV$.  We regard the three categories mentioned above as ``models'' or ``incarnations'' of $\cV$, and move between them as convenient.  (There are a few other models we introduce as well.)  The different points of view can be extremely useful, as certain constructions or results are easy to see in one model but not another.  For instance, we can define a functor $\cV \to \cV$ by taking a sequence $(A_n)$ to the sequence $(A_n^*)$, where $A_n^*$ is the usual linear dual of $A_n$.  What does this operation correspond to in terms of representations of $\GL(\infty)$?  (Hint: it is not ``dual!'')

\article
We motivated tca's by observing that many examples in commutative algebra occur by applying a natural construction to a vector space.  However, there are many similar examples that come by applying a natural construction to several vector spaces.  Examples include determinantal varieties, Segre varieties and the secant and tangent varieties to these varieties.  The algebras and modules which arise are algebras are called {\bf multivariate tca's}, and can be viewed as algebras in tensor powers of $\cV$.  We do not discuss them much in the body of the paper, but they share many properties with tca's, so we will just list some pointers to the literature here:
\begin{itemize}
\item The minimal free resolutions of determinantal ideals were calculated by Lascoux \cite{lascoux}, and the description is given in terms of an analogue of Definition 1. The  calculations for symmetric and skew-symmetric matrices were obtained in \cite{jpw}. See also \cite[Chapter 6]{weyman} for these results. We point to \cite{ssw} for a generalization, which is discussed briefly in \S\ref{ss:infrank}.
\item The analogue of Definition 3 has been successfully used in the papers \cite{raicu} and \cite{oedingraicu} to calculate the ideal of definition of the secant and tangential varieties to arbitrary Segre--Veronese embeddings.
\item In a different direction, one can make precise the notion of letting the number of factors become infinite, i.e., working with infinite tensor products. This was used in \cite{draismakuttler} to show that for every $k$, there is a constant $d(k)$ so that the $k$th secant variety to a Segre embedding is defined (set-theoretically) by equations of degree at most $d(k)$ (independent of the number of factors in the Segre embedding).
\end{itemize}

\vskip.6\baselineskip\noindent
{\bf Outline.}\nopagebreak

\article
An analogy worth keeping in mind is that vector spaces are to commutative algebra what the representation theory of the general linear and symmetric groups are to twisted commutative algebra.  It is therefore essential that one have a basic understanding of this representation theory before studying tca's.  We give an overview of this theory in the first part of the paper.

\article
In the second part of the paper, we introduce and study the category $\cV$.  In \S\ref{sec:models} we introduce the various models for the category $\cV$ and discuss the equivalences between them.  In \S\ref{sec:cvprop} we discuss the extremely rich structure of $\cV$; it is much more than just a tensor category!   The final section of this part, \S\ref{s:transp}, is somewhat technical:  it recalls the exact definition of a symmetric tensor category, which is needed for precisely stating the categorical properties of the transpose operation.

\article
We finally get to tca's in the third part of the paper.  In \S\ref{sec:tca-gen} we discuss the general theory.  We begin by giving basic definitions (tca's, modules, ideals, finite generation, etc.).  We then give less basic definitions (nilradicals, prime ideals, etc.) and prove some basic results.  While we manage to prove some theorems and give some interesting examples, there are many basic questions which we do not answer.  For instance:  does a noetherian tca have finitely many minimal prime ideals?

\article
In \S\ref{sec:tca-bd} we discuss the class of bounded tca's.  These are much easier to deal with than general tca's and tend to be much better behaved.  For instance, the nilradical in a noetherian bounded tca is nilpotent, while this is not true in the unbounded case!  Fortunately, many tca's of interest are bounded.

\article
In \S\ref{sec:existingapps} we review four existing applications of the theory of tca's, either implicit or explicit:  (1) The construction of pure resolutions in \cite{efw}, mentioned above.  (2) The theory of FI-modules \cite{fimodules}.  In fact, FI-modules are simply modules over the tca $\Sym(\bC\langle 1 \rangle)$!  (3) The work of the second author on syzygies of Segre embeddings.  Here tca's are used to establish basic properties of more exotic algebraic structures, called $\Delta$-modules.  (4) Applications of tca's to certain problems in invariant theory.

\article
Finally, in \S\ref{sec:announce}, we announce some of our new results on tca's that have already appeared or are yet to appear.  We briefly mention a few of these results here:
\begin{itemize}
\item In \cite{symc1} we give a very thorough description of the category of $\Sym(\bC\langle 1 \rangle)$-modules.
\item In \cite{koszul}, we show that in many cases projective resolutions over tca's, while unbounded, have strong finiteness properties.
\item In \cite{reptheory}, we show that certain representation categories of infinite rank groups can be described as module categories over tca's.  For instance, $\Rep(\bO(\infty))$ is equivalent to the category of modules over the tca $\Sym(\Sym^2(\bC^{\infty}))$.  This point of view can be very useful as it allows one to transfer results and constructions from commutative algebra to representation theory.
\end{itemize}

\article
The first part of the article is entirely expository and the material is widely known.  The second part is mostly expository still, though the material is somewhat more obscure.  The third part of the article is semi-expository.  Some of the results and definitions have appeared in a few papers in the literature, while others are new.

\vskip.6\baselineskip\noindent
{\bf Purpose of this article.}
We had two main sources of motivation for writing this article.  First, as research articles are now appearing that use tca's, we thought it would be useful to have an account of the basic theory in the literature to serve as a reference.  And second, it seemed difficult to us to find a single source that covers all of the background material needed for the theory of tca's, so we have tried to collect most of it here. In the process of learning this material ourselves, we have found the references \cite{fulton, fultonharris, james, kraftprocesi, macdonald, stanley, weyman} useful. Our aim with this paper is not to give a complete self-contained account of the requisite background theory. Rather, we aim to give a working guide with pointers to the relevant literature as necessary.

\vskip.6\baselineskip\noindent
{\bf Reading plan.}
On a first reading, we suggest skipping several of the sections, and the suggested reading will depend on the background of the user:

\begin{itemize}
\item For those unfamiliar with representation theory, we have tried to give quick access to the relevant background in Part~\ref{part:reptheory}.
\item The definition of the category $\cV$ is essential to the rest of the theory, and for that, the reader should see \S\S\ref{sec:sequenceV}, \ref{sec:GLmodelV}, \ref{sec:schurV}. The proofs can be skipped without loss of continuity. The basic operations and structures on $\cV$ are described in \S\ref{sec:cvprop}, though only \S\ref{ss:cvbasic} is essential.
\item Twisted commutative algebras are the main object of study.  Their definition is given in \S\ref{ss:tca-defn}, some basic examples are given in \S \ref{ss:tca-ex} and their general properties are developed in the remainder of \S\ref{sec:tca-gen}.  In \S\ref{sec:tca-bd}, the important class of bounded tca's are studied.
\end{itemize}

The rest of the reading plan depends on the taste of the reader. We include some existing uses of twisted commutative algebras in \S\ref{sec:existingapps} and an announcement of our own new results in \S\ref{sec:announce}. 

\vskip.6\baselineskip\noindent
{\bf Conventions.}
Throughout, we work over the field of complex numbers $\bC$. However, everything works exactly the same over any field of characteristic 0, as all constructions are defined over the field of rational numbers $\bQ$.

\part{Background on symmetric and general linear groups} \label{part:reptheory}

\xsection{Partitions and Young diagrams}

\article[Partitions]
A {\bf partition} $\lambda$ is a weakly decreasing sequence $(\lambda_1, \lambda_2, \ldots)$ of non-negative integers.  We regard partitions as infinite sequences which are eventually 0, although we will often not write the trailing zeros.  We will often need to use partitions where one or several numbers are repeated many times.  We use a superscript to denote such repetition.  For example, $(5,3,1^3)$ denotes the partitions $(5,3,1,1,1)$.  The partition $\lambda=(0, 0, \ldots)$ is perfectly valid, and called the {\bf zero partition}. Sometimes it is denoted by the symbol $\emptyset$.

\article
Let $\lambda=(\lambda_1, \lambda_2, \ldots)$ be a partition.  We write $\vert \lambda \vert$ for the sum of the $\lambda_i$, and call this the {\bf size} of $\lambda$. If $\lambda$ has size $n$ we also say that $\lambda$ is a partition of $n$, and write $\lambda \vdash n$.  We write $\ell(\lambda)$ for the number of indices $i$ for which $\lambda_i$ is non-zero, and call this the {\bf length} of $\lambda$.  For example, the partition $(5,3,1)$ has size 9 and length 3.  The zero partition has size 0 and length 0.

\article[Young diagrams]
Partitions are represented visually using Young diagrams.  Such a diagram consists of a number of rows of boxes which are aligned on their left sides and whose lengths are weakly decreasing.  The second condition means no row extends to the right of any row above it.  The partition $\lambda=(\lambda_1, \lambda_2, \ldots)$ corresponds to the Young diagram whose $i$th row has length $\lambda_i$.  For example, the Young diagram
\begin{displaymath}
\ydiagram{5,3,2}
\end{displaymath}
corresponds to the partition $(5,3,2)$.  The total number of boxes in the Young diagram is equal to the size of the corresponding partition, while the number of rows is equal to the length of the corresponding partition.

\article
\label{young:diff}
Now that we have this graphical language, we say that $\lambda$ is contained in $\mu$, written $\lambda \subseteq \mu$, if the Young diagram of $\lambda$ fits into that of $\mu$. This is equivalent to the condition $\lambda_i \le \mu_i$ for all $i$. By definition, the diagram of $\mu / \lambda$ is the complement of the Young diagram of $\mu$ inside of the Young diagram of $\lambda$. These are also known as {\bf skew Young diagrams}.

\article[Transpose]
By flipping a Young diagram along its diagonal one obtains a new Young diagram, called the {\bf transposed} diagram.  For example, the transpose of the diagram pictured above is the diagram
\begin{displaymath}
\ydiagram{3,3,2,1,1}
\end{displaymath}
As we have identified Young diagrams and partitions, we obtain a notion of transpose for partitions.  The above example shows that the transpose of $(5,3,2)$ is $(3,3,2,1,1)$.  We write $\lambda^{\dag}$ for the transpose of $\lambda$.  It can be described symbolically as follows:  
\begin{align}
(\lambda^{\dag})_i = \#\{j \mid \lambda_j \ge i\}.
\end{align}

\article[Frobenius coordinates]
The {\bf rank} of a partition $\lambda$ is the number of boxes along the main diagonal of its Young diagram.  Suppose $\lambda$ has rank $r$.  Let $a_i$ (resp.\ $b_i$) for $1 \le i \le r$ be the number of boxes to the left (resp.\ below) the $i$th box on the main diagonal.  The $a_i$ and $b_i$ are the {\bf Frobenius coordinates} of $\lambda$, and we write $\lambda=(a_1, \ldots, a_r \mid b_1, \ldots, b_r)$ to express $\lambda$ in terms of its Frobenius coordinates.  For example, if $\lambda=(5,3,2)$ then $\lambda$ has rank 2 and $\lambda=(4, 1 \mid 2, 1)$ is the expression of $\lambda$ in terms of its Frobenius coordinates.  Note that $\vert \lambda \vert=r+\sum a_i+\sum b_i$ and $\lambda^{\dag}=(b_1, \ldots, b_r \mid a_1, \ldots, a_r)$.

\xsection{Symmetric groups}

\article[Conjugacy classes] \label{ss:conj}
Let $n \ge 0$ be an integer and let $S_n$ be the symmetric group on $n$ letters.  There is a natural bijection between conjugacy classes in $S_n$ and partitions of $n$.  Under this bijection, a partition $\lambda$ corresponds to the conjugacy class $c_{\lambda}$ of a product $\tau_1 \cdots \tau_k$, where $\tau_i$ is a cycle of length $\lambda_i$ and $k=\ell(\lambda)$.  For example, $c_{(n)}$ is the conjugacy class of an $n$-cycle while $c_{(1^n)}$ is the conjugacy class of the identity element.

\article[Irreducible representations] \label{ss:sym-irrep}
Just like the conjugacy classes, the irreducible representations of the symmetric group $S_n$ are naturally indexed by partitions of $n$.  We write $\bM_{\lambda}$ for the irreducible corresponding to $\lambda$.  This representation is often defined as the right ideal in the group algebra $\bC[S_n]$ generated by a certain idempotent element corresponding to $\lambda$, called the {\bf Young symmetrizer}.  As the details of this construction are not relevant for us, we point the reader to \cite[\S 7]{fulton} (a more in-depth treatment of symmetric group representations over arbitrary rings can be found in the book \cite{james}). These constructions imply that the complex representations of $S_n$ are all realizable over the field of rational numbers $\bQ$. There is an elegant combinatorial rule, known as the {\bf Murnaghan--Nakayama rule} for the characters of the symmetric group, see \cite[Example I.7.5]{macdonald} or \cite[\S\S 7.17--7.18]{stanley}.

\begin{Example}
Let us now give some simple examples.  
\begin{enumerate}
\item If $\lambda=(n)$, so that the corresponding Young diagram has a single row, then $\bM_{\lambda}$ is the trivial representation.  
\item If $\lambda=(1^n)$, so that the corresponding Young diagram has $n$ rows, then $\bM_{\lambda}$ is the sign representation (i.e., the one dimensional representation given by the sign character).  
\item If $\lambda=(n-1, 1)$ then $\bM_{\lambda}$ is the {\bf standard representation}, i.e., the subspace of $\bC^n$ where the coordinates sum to zero, with $S_n$ acting by permuting coordinates.  (The standard representation can also be described as the quotient of $\bC^n$ be the line spanned by $(1, 1, \ldots, 1)$.)
\item If $\lambda=(n-k,1^k)$, then $\bM_{\lambda}$ is the $k$th exterior power of the standard representation. \qedhere
\end{enumerate}
\end{Example}

\article
We have just seen that the irreducible representations of $S_n$ correspond to partitions of $n$, or equivalently, Young diagrams with $n$ boxes.  This leads to the important theme of describing properties or operations on representations in terms of the combinatorics of Young diagrams, or vice versa.

\article[Transpose] \label{ss:transposesign}
For example, there is an involution on the set of Young diagrams with $n$ boxes given by transposition.  What does this operation correspond to in terms of irreducible representations of $S_n$?  The most obvious guess one might make is that it corresponds to formation of the dual representation.  However, this is clearly not the case:  the dual of the trivial representation is again trivial, while the dual of the partition $(n)$ is the partition $(1^n)$.  In fact, \emph{every} finite dimensional representation of $S_n$ is isomorphic to its dual (this is equivalent to all characters being real-valued) so duality does not give an interesting involution on the set of irreducibles.  Rather, transposition of Young diagrams corresponds to twisting by the sign character:  
\begin{align}
\bM_{\lambda^{\dag}} \cong \bM_{\lambda} \otimes \sgn.
\end{align}

\article[The hook-length formula]
Perhaps the most fundamental property of a representation is its dimension.  The dimension of $\bM_{\lambda}$ can be obtained from the Young diagram of $\lambda$ by the so-called {\bf hook-length formula}.  To describe this formula, consider a box $b$ in a Young diagram.  The {\bf hook} of $b$, denoted ${\rm hook}(b)$, is the collection of all boxes to the right of, and in the same row as, $b$, together with those below, and in the same column as $b$; the box $b$ itself is counted as well.  The {\bf hook length} is the number of boxes in the hook.  The hook-length formula \cite[\S 20]{james} then says that
\begin{align}
\dim(\bM_{\lambda})=\frac{\vert \lambda \vert !}{\prod_{b \in \lambda} \textrm{hook}(b)}.
\end{align}
One can also give a determinantal formula
\begin{align}
\dim(\bM_\lambda) = n! \det\left(\frac{1}{(\lambda_i - i + j)!} \right)
\end{align}
with the convention that $1/r! = 0$ if $r<0$ and $0! = 1$ \cite[Corollary 19.5]{james}.

\begin{Example}
To compute the dimension of $\bM_{(5,3,2)}$ we first compute the hook length of each box in the Young diagram.  The result is the following:
\begin{displaymath}
\begin{ytableau}
7 & 6 & 4 & 2 & 1\\
4 & 3 & 1 \\
2 & 1
\end{ytableau}
\end{displaymath}
The hook-length formula now gives
\begin{displaymath}
\dim(\bM_{(5,3,2)})=\frac{10!}{7 \cdot 6 \cdot 4 \cdot 4 \cdot 3 \cdot 2 \cdot 2}=450.
\end{displaymath}
The determinantal formula gives
\[
\dim(\bM_{(5,3,2)}) = 10! \det \begin{pmatrix} 1/5! & 1/6! & 1/7! \\ 1/2! & 1/3! & 1/4! \\ 1/0! & 1/1! & 1/2! \end{pmatrix} = 450. \qedhere
\]
\end{Example}

\article[The Pieri rule] \label{ss:pieri}
There is an inclusion $S_n \subset S_{n+1}$; if we think of $S_{n+1}$ as automorphisms of the set $\{1, \ldots, n+1\}$ then $S_n$ can be described as the stabilizer of $n+1$. While there are other possible embeddings, they are all conjugate to one another, so we lose nothing by considering this one. Given a representation of $S_n$, we can induce it via this inclusion to obtain a representation of $S_{n+1}$.  The Pieri rule, in its simplest form, gives a combinatorial description of the decomposition of the induction of an irreducible representation.  Precisely, it says that if $\lambda$ is a partition of $n$ then
\begin{align}
\Ind_{S_n}^{S_{n+1}}(\bM_{\lambda})=\bigoplus_{\mu \supset \lambda,\ |\mu|-|\lambda|=1} \bM_{\mu},
\end{align}
(This section is a special case of \pref{ss:lw}, so we wait until then to give references.) An important feature of this result is that the induction is multiplicity-free.

\begin{Example}
The induction of the irreducible of $S_5$ corresponding to the Young diagram
\begin{displaymath}
\ydiagram{2,2,1}
\end{displaymath}
is the direct sum of the irreducibles of $S_6$ correspond to the following Young diagrams:
\begin{displaymath}
\ydiagram[*(white)]{2,2,1}*[*(gray)]{3,2,1} \qquad
\ydiagram[*(white)]{2,2,1}*[*(gray)]{2,2,2} \qquad
\ydiagram[*(white)]{2,2,1}*[*(gray)]{2,2,1,1}
\end{displaymath}
The shaded boxes indicate those added by the rule.
\end{Example}

\article
As mentioned, the above rule is just the simplest form of Pieri's rule.  We now give its most general formulation.  Let $\lambda$ be a partition of $n$ and let $m \ge 0$.  We regard $S_n \times S_m$ as a subgroup of $S_{n+m}$ in the obvious manner, and we regard $\bM_{\lambda}$ as a representation of $S_n \times S_m$ with $S_m$ acting trivially.  If $\mu$ contains $\lambda$, we say that $\mu / \lambda$ is a {\bf horizontal strip} of size $m$ if $\mu$ is obtained from $\lambda$ by adding $m$ boxes, no two of which appear in the same column. In this case, we write $\mu / \lambda \in \HS_m$. We also set $\HS = \bigcup_m \HS_m$. Pieri's rule then states
\begin{align}
\Ind_{S_n \times S_m}^{S_{n+m}}(\bM_{\lambda})=\bigoplus_{\mu,\, \mu / \lambda \in \HS_m} \bM_{\mu},
\end{align}
The version of Pieri's rule given above is just the $m=1$ case of this rule.  As in the $m=1$ case, the induction is multiplicity-free.  (Note:  one might think that a more direct generalization of the $m=1$ case of Pieri's rule would be a rule computing the induction from $S_n$ to $S_{n+m}$.  However, this can be obtained easily by iteratively applying the $m=1$ case.)

\begin{Example}
Consider the case where $n=5$, $m=2$ and $\lambda$ is given by the diagram
\begin{displaymath}
\ydiagram{2,2,1}
\end{displaymath}
Pieri's rule says that the induced representation decomposes into the irreducibles corresponding to
\begin{displaymath}
\ydiagram[*(white)]{2,2,1}*[*(gray)]{4} \qquad
\ydiagram[*(white)]{2,2,1}*[*(gray)]{3,2,2} \qquad
\ydiagram[*(white)]{2,2,1}*[*(gray)]{3,2,1,1} \qquad
\ydiagram[*(white)]{2,2,1}*[*(gray)]{2,2,2,1}
\end{displaymath}
\end{Example}

\article
Using Frobenius reciprocity \cite[\S 7.2]{serre}, we also get combinatorial rules for restricting a representation of $S_{n+1}$ to $S_n$, namely:
\begin{align}
\bM_\lambda|_{S_n} = \bigoplus_{\mu \subset \lambda,\ |\lambda|-|\mu|=1} \bM_\mu.
\end{align}
So we just consider all possible ways to remove a single box from $\lambda$. The rule for restriction from $S_{n+m}$ to $S_n \times S_m$ is given in \pref{ss:lw}.

\article[The Littlewood--Richardson rule] \label{ss:lw}
As we just saw, the Pieri rule computes the decomposition of the representation
\begin{displaymath}
\Ind_{S_n \times S_m}^{S_{n+m}}(\bM_{\lambda} \otimes 1)
\end{displaymath}
into irreducibles, where $\lambda$ is a partition of $n$ and 1 denotes the trivial representation of $S_m$.  Written in this form, it is natural to try to replace the trivial representation 1 with an arbitrary irreducible of $S_m$.  This is exactly what the Littlewood--Richardson rule accomplishes.  Precisely, let $\lambda$ be a partition of $n$ and let $\mu$ be a partition of $m$.  We then have a decomposition
\begin{align}
\Ind_{S_n \times S_m}^{S_{n+m}}(\bM_{\lambda} \otimes \bM_{\mu})=\bigoplus_{\nu} \bM_{\nu}^{\oplus c_{\lambda,\mu}^{\nu}},
\end{align}
where the sum is over all partitions $\nu$ of $n+m$ and $c_{\lambda,\mu}^{\nu}$ is a non-negative integer.  The numbers $c_{\lambda,\mu}^{\nu}$ are called the {\bf Littlewood--Richardson coefficients}, and the eponymous rule gives a combinatorial description of them. 

\article
There are many descriptions for this rule, and we will formulate it via lattice words. First consider the skew-diagram $\nu / \lambda$. We fill the boxes with positive integers so that $i$ appears exactly $\mu_i$ times. Then $c^\nu_{\lambda, \mu}$ counts the number of such fillings which satisfy the properties (Littlewood--Richardson tableaux):
\begin{itemize}
\item semistandard: the entries are weakly increasing from left to right in each row, and the entries are strictly increasing from top to bottom in each column
\item lattice word: Read the entries right to left in each row, starting with the top row to get a sequence of positive integers (reading word). Then each initial segment of this sequence has the property that for each $i$, $i$ occurs at least as many times as $i+1$.
\end{itemize}
See \cite[\S 5, \S 7.3]{fulton} or \cite[\S I.9]{macdonald}. See also \cite[Appendix 7.A.1.3]{stanley} for some other formulations of the rule. Here are some simple consequences of the Littlewood--Richardson rule:
\begin{itemize}
\item If $c^\nu_{\lambda, \mu} \ne 0$, then $\lambda \subseteq \nu$ and $\mu \subseteq \nu$.
\item For all partitions $\lambda, \mu$, $c^{\lambda + \mu}_{\lambda, \mu} = 1$, and $c^{\lambda \cup \mu}_{\lambda, \mu} = 1$ where $\lambda \cup \mu$ denotes the partition obtained by sorting the sequence $(\lambda, \mu)$. To prove these, fill the Young diagram of $\mu$ with the number $i$ in each box in the $i$th row. Append the $i$th row to the $i$th row of $\lambda$ to see $c^{\lambda+\mu}_{\lambda, \mu} \ge 1$. Append the $i$th column to the $i$th column of $\lambda$ to see $c^{\lambda \cup \mu}_{\lambda, \mu} \ge 1$. The reverse inequalities follow by the extremality of these shapes.
\item For all integers $N > 0$, we have $c^{N\nu}_{N\lambda, N\mu} \ge c^{\nu}_{\lambda, \mu}$, which can be seen by ``stretching'' the Littlewood--Richardson tableau. As a consequence, if $c^{\nu}_{\lambda, \mu} > 0$, then $c^{N\nu}_{N\lambda, N\mu} > 0$ for any $N>0$. The converse of this statement is also true, i.e., if $c^{N\nu}_{N\lambda, N\mu} > 0$ for some $N>0$, then $c^{\nu}_{\lambda, \mu} > 0$. This is a highly non-trivial fact known as the saturation theorem, see \cite{kt, dw, km} for different proofs of it. Furthermore, the function $N \mapsto C^{N\nu}_{N\lambda,N\mu}$ is a polynomial in $N \ge 0$ for any fixed choice of $\lambda, \mu, \nu$ \cite[Corollary 3]{LRpoly}.
\end{itemize}

And here are some properties which are not obvious from the Littlewood--Richardson rule, but follow easily from the representation-theoretic interpretation:
\begin{itemize}
\item Symmetry: $c^\nu_{\lambda, \mu} = c^\nu_{\mu, \lambda}$. One way to give a symmetric combinatorial rule for $c^\nu_{\lambda, \mu}$ is to use the plactic monoid and jeu de taquin \cite[\S 2, \S 5.1]{fulton}.
\item Transpose symmetry: $c^{\nu^\dagger}_{\lambda^\dagger, \mu^\dagger} = c^\nu_{\lambda, \mu}$.
\end{itemize}

\begin{Example} 
We calculate $c^{(5,3,2,1)}_{(3,1), (4,2,1)} = 3$. The Littlewood--Richardson tableaux are
\[
\begin{ytableau}
\ & & & 1 & 1 \\
& 1 & 1 \\ 
2 & 2\\
3
\end{ytableau} \qquad
\begin{ytableau}
\ & & & 1 & 1 \\
& 1 & 2 \\ 
1 & 2\\
3
\end{ytableau} \qquad
\begin{ytableau}
\ & & & 1 & 1 \\
& 1 & 2 \\ 
1 & 3\\
2
\end{ytableau} \qquad
\]
The reading words are 1111223, 1121213, and 1121312, respectively.
It is easier to calculate this number after swapping the roles of $(3,1)$ and $(4,2,1)$:
\[
\begin{ytableau}
\ &  &  & & 1 \\
 & & 2 \\
 & 1 \\
1 
\end{ytableau} \qquad 
\begin{ytableau}
\ &  &  & & 1 \\
 & & 1 \\
 & 2 \\
1 
\end{ytableau} \qquad
\begin{ytableau}
\ &  &  & & 1 \\
 & & 1 \\
 & 1 \\
2
\end{ytableau} 
\]
\end{Example}

\article
Using Frobenius reciprocity \cite[\S 7.2]{serre}, we get the following restriction rule 
\begin{align}
\bM_\nu |^{S_{n+m}}_{S_n \times S_m} = \bigoplus_{\lambda, \mu} (\bM_\lambda \otimes \bM_\mu)^{\oplus c^\nu_{\lambda, \mu}}.
\end{align}

\article[Decomposition of tensor products] \label{ss:sym-ten}
Let $\lambda$ and $\mu$ be partitions of $n$.  We have a decomposition
\begin{align}
\bM_{\lambda} \otimes \bM_{\mu} = \bigoplus_{\nu} \bM_{\nu}^{\oplus g_{\lambda,\mu,\nu}},
\end{align}
where the sum is over the partitions $\nu$ of $n$ and the $g_{\lambda,\mu,\nu}$ are non-negative integers, called the {\bf Kronecker coefficients}.  Determining the multiplicities $g_{\lambda,\mu,\nu}$ above may seem like a more natural problem than determining the Littlewood--Richardson coefficients.  However, for our applications it is the Littlewood--Richardson coefficients that are more relevant. Furthermore, a combinatorial formula for the $g_{\lambda,\mu,\nu}$ analogous to the Littlewood--Richardson rule is not known in general. A positive, combinatorial rule for $g_{\lambda, \mu, \nu}$ when two of the partitions $\lambda, \mu, \nu$ have at most 2 parts is given in \cite{gct4} (and see the references therein for other special cases). We remark that, because all irreducibles of $S_n$ are self-dual, the quantity $g_{\lambda,\mu,\nu}$ is symmetric in $\lambda$, $\mu$ and $\nu$.

\xsection{General linear groups}

\article
Let $n \ge 0$ be an integer and let $G=\GL(n)$ be the general linear group, i.e., the group of linear automorphisms of the vector space $\bC^n$.  We let $B$ denote the subgroup of $G$ consisting of upper triangular matrices in $G$; this is the {\bf standard Borel}.  We let $T$ denote the subgroup of $B$ consisting of diagonal matrices; this is the {\bf standard maximal torus}.  We let $U$ denote the subgroup of strictly upper triangular matrices, i.e., the elements of $B$ whose diagonal entries are 1; this is the {\bf unipotent radical} of $B$.

We use $\fgl(n)$ to denote the Lie algebra of $G$. Its universal enveloping algebra is denoted $\rU(\fgl(n))$.

\article[Rational and polynomial representations]
Let $V$ be a finite dimensional representation of $G$ and let $\rho\colon G \to \GL(V)$ denote the action map.  We say that $V$ is {\bf algebraic} or {\bf rational} (the two terms are synonymous) if the matrix entries of $\rho(g)$ are expressible as rational functions of those of $g$.  Similarly, we say that $V$ is {\bf polynomial} if the matrix entries of $\rho(g)$ are expressible as polynomials in the matrix entries of $G$.  Every representation we consider will be rational, so we typically say ``representation'' in place of ``rational representation.''  The class of rational representations is closed under formation of direct sums, tensor products and duals.  The class of polynomial representations is closed under formation of direct sums and tensor products, but not duals.

\article
The simplest examples of these concepts are provided by powers of the determinant:  for any integer $k$, we have a homomorphism $\det^k\colon G \to \bC^{\times}$, which we can regard as a one-dimensional representation of $G$.  Since the determinant is a polynomial in the entries of $G$, this representation is always rational, and it is polynomial if $k$ is non-negative.  A second basic example is the standard representation:  the action of $G=\GL(n)$ on $\bC^n$ is a polynomial representation.  The dual of this representation is a rational representation, but is no longer polynomial.

\article[The weight lattice]
Let $X$ denote the set of algebraic homomorphisms $T \to \bC^{\times}$.  An element of $X$ is called a {\bf weight} and $X$ is called the {\bf weight lattice}; it forms an abelian group.  If we denote by $[a_1, \ldots, a_n]$ the diagonal matrix with entries $a_1, \ldots, a_n$, then it is easy to see that any weight is of the form
\begin{align}
[a_1, \ldots, a_n] \mapsto a_1^{k_1} \cdots a_n^{k_n}
\end{align}
for integers $k_i$.  We thus have a natural isomorphism of $X$ with $\bZ^n$, and in what follows we often identify the two.  A weight is {\bf dominant} if, in the above notation, the sequence $k_i$ is weakly decreasing.  A weight is {\bf non-negative} if the $k_i$ are all non-negative.

\article[Highest weight theory] \label{ss:highestweight}
The main results of the representation theory of $\GL(n)$ are summarized as follows (in what follows all representations are rational):
\begin{enumerate}[(a)]
\item Every representation of $G$ is a direct sum of irreducible representations.
\item A representation $V$ of $G$ is irreducible if and only if $\dim V^U = 1$.  If $V$ is irreducible, then the action of $T$ on $V^U$ is through a dominant weight.  We call this the {\bf highest weight} of $V$.
\item Two irreducible representations of $G$ are isomorphic if and only if their highest weights are equal.
\item For any dominant weight $\lambda$ there exists an irreducible representation $V_{\lambda}$ with highest weight $\lambda$.
\item The irreducible representation $V_{\lambda}$ is polynomial if and only if $\lambda$ is non-negative.  A general rational representation is polynomial if and only if all of its irreducible constituents are.
\end{enumerate}
By the above, a polynomial irreducible representation has highest weight of the form $\lambda=(\lambda_1, \ldots, \lambda_n)$ where each $\lambda_i$ is non-negative and the $\lambda_i$ are weakly decreasing.  We can therefore think of $\lambda$ as a partition of length at most $n$. See \cite[\S 5.8]{kraftprocesi} or \cite[\S 8.2]{fulton} for details.

\article[The hook-content formula] \label{art:hookcontent}
The dimension of $V_\lambda$ is given by the {\bf hook-content formula}. Given a box $b$ in the $i$th row and $j$th column of $\lambda$, its {\bf content} is ${\rm cont}(b) = j-i$. Then the hook-content formula says
\begin{align}
\dim V_\lambda = \prod_{b \in \lambda} \frac{n + {\rm cont}(b)}{{\rm hook}(b)}.
\end{align}
This is connected to a few facts. First, $\dim V_\lambda$ counts ``semistandard Young tableaux''. See for example, \cite[Example I.A.8.1]{macdonald} or \cite[Proposition 2.1.4]{weyman} (in this reference, $L_\lambda$ is isomorphic to $V_{\lambda^\dagger}$). The hook-content formula is often stated as an enumeration for semistandard Young tableaux (\cite[Example I.3.4]{macdonald} or \cite[Corollary 7.21.4]{stanley}). In particular, as $n$ varies, but $\lambda$ is fixed, this dimension is a polynomial in $n$ of degree $|\lambda|$.

\begin{Example}
Let $n = 6$ and $\lambda = (4,2,1)$. The contents and hooks for $\lambda$ are, respectively,
 \begin{displaymath}
\begin{ytableau}
0 & 1 & 2 & 3 \\
\text{-}1 & 0 \\
\text{-}2
\end{ytableau} \qquad
\begin{ytableau}
6 & 4 & 2 & 1 \\
3 & 1 \\
1
\end{ytableau}
\end{displaymath}
So the hook-content formula gives (we order boxes left to right, top to bottom)
\[
\dim V_{(4,2,1)} = \frac{6 \cdot 7 \cdot 8 \cdot 9 \cdot 5 \cdot 6 \cdot 4}{6 \cdot 4 \cdot 2 \cdot 1 \cdot 3 \cdot 1 \cdot 1} = 2520. \qedhere
\]
\end{Example}

\article
Let us now give some examples to illustrate highest weight theory.
\begin{enumerate}[(a)]
\item {\it One-dimensional representations.}  Consider the one dimensional representation $V$ given by $\det^k$.  Of course, $U$ acts trivially on $V$ and so $V=V^U$.  The action of an element $[a_1, \ldots, a_n]$ in $T$ on $V$ is given by $a_1^k \cdots a_n^k$.  This is the weight $(k, \ldots, k)$.  Thus the representation $\det^k$ has highest weight $(k, \ldots, k)$.
\item {\it The standard representation.}  Let $x_1, \ldots, x_n$ be the standard basis for $V=\bC^n$.  Then $V^U$ is the line spanned by $x_1$.  If $t=[a_1, \ldots, a_n]$ then $tx_1=a_1x_1$.  Thus the action of $T$ on $V^U$ is through the weight $(1, 0, 0, \ldots, 0)$, and so this is the highest weight of the standard representation.  A similar computation shows that the highest weight of $V^*$, the dual of the standard representation, is given by $(0, \ldots, 0, -1)$.
\item {\it Symmetric powers.}  Now consider the case $V=\Sym^k(\bC^n)$.  We can think of $V$ as the space of homogeneous degree $k$ polynomials in the variables $x_1, \ldots, x_n$.  An easy computation shows that $V^U$ is spanned by $x_1^k$.  It follows that $\Sym^k(\bC^n)$ is irreducible and has highest weight $(k, 0, \ldots, 0)$.
\item {\it Exterior powers.}  Finally, consider the case $V=\bw{k}(\bC^n)$ with $k \le n$.  We can think of $V$ as the space spanned by $k$-fold wedges in the elements $x_i$.  One computes that $V^U$ is spanned by $x_1 \wedge x_2 \wedge \cdots \wedge x_k$.  It follows that $\bw{k}(\bC^n)$ is irreducible and has highest weight $(1, \ldots, 1, 0, \ldots, 0)$, where there are $k$ 1's and $n-k$ 0's.
\end{enumerate}

\article[Tensor products] \label{ss:gln-ten}
Having classified irreducible representations, we would now like to understand how the tensor product of two irreducible representations decomposes.  To begin with, one easily sees that $V_{\lambda} \otimes \det^k$ is the irreducible with highest weight $(\lambda_1+k,\ldots,\lambda_n+k)$.  Thus to say how $V_{\lambda} \otimes V_{\mu}$ decomposes in general it suffices to treat the case where $\lambda$ and $\mu$ are non-negative, since we can first twist by an appropriate power of the determinant to move into this case, then decompose and then untwist.  Assuming $\lambda$ and $\mu$ are non-negative, we have the following incredible result:
\begin{align}
V_{\lambda} \otimes V_{\mu} = \bigoplus_{\nu} V_{\nu}^{\oplus c^{\nu}_{\lambda,\mu}}
\end{align}
where the sum is over non-negative dominant weights $\nu$ with $\vert \nu \vert=\vert \lambda \vert+\vert \mu \vert$ and $c^{\nu}_{\lambda,\mu}$ is the Littlewood--Richardson coefficients introduced in \pref{ss:lw}!  We will show how this result can be deduced from Schur--Weyl duality in \pref{ss:sw-ten}.

\article \label{art:gln-pieri}
An important corollary of \pref{ss:gln-ten} is that if $V=V_{\lambda}$ is an irreducible representation of $G$ then $V \otimes \Sym^k(\bC^n)$ is multiplicity-free for any $k$.  Indeed, $\Sym^k(\bC^n)$ has highest weight $\mu=(k, 0, \ldots, 0)$ and so the Littlewood--Richardson coefficients $c_{\lambda,\mu}^{\nu}$ are computed by Pieri's rule, which we know is multiplicity free, i.e., the coefficients are all 0 or 1 (see \pref{ss:pieri}). Similar statements hold for exterior powers. In particular, we get the formulas
\begin{align}
V_\lambda \otimes V_d &= \bigoplus_{\mu,\ \mu / \lambda \in \HS_d} V_\mu,\\
V_\lambda \otimes V_{1^d} &= \bigoplus_{\mu,\ \mu / \lambda \in \VS_d} V_\mu.
\end{align}
Explicit formulas for the inclusions $V_\mu \subset V_\lambda \otimes V_d$ and $V_\mu \subset V_\lambda \otimes V_{1^d}$ were given in \cite[\S 6]{olver}, and computer implementations in {\tt Macaulay 2} of these maps have been written \cite{pierimaps}.

\article[Branching rules] \label{ss:GLbranching}
We can embed $\GL(n) \times \GL(m)$ into $\GL(n+m)$ as the block diagonal matrices. We can describe the restriction of an irreducible polynomial representation of $\GL(n+m)$ again using Littlewood--Richardson coefficients. To avoid confusion, we use $V^{(N)}_\lambda$ if we want to emphasize that $V_\lambda$ is a representation $\GL(N)$. Then the branching formula is
\begin{align}
V^{(n+m)}_\nu|^{\GL(n+m)}_{\GL(n) \times \GL(m)} = \bigoplus_{\lambda, \mu} (V^{(n)}_{\lambda} \boxtimes V^{(m)}_\mu)^{\oplus c^\nu_{\lambda, \mu}},
\end{align}
where again the $c^\nu_{\lambda, \mu}$ are Littlewood--Richardson coefficients. As above, the case of a general rational representation can be reduced to the polynomial case. We will deduce this result from Schur--Weyl duality in \pref{art:sw-branching}.

\article \label{ss:kroneckerbranch}
We also have a map $\pi \colon \GL(n) \times \GL(m) \to \GL(nm)$ by considering the natural action of $\GL(n) \times \GL(m)$ on $\bC^n \otimes \bC^m$. Given a representation $V^{(nm)}_\nu$ of $\GL(nm)$, we can describe its pullback to $\GL(n) \times \GL(m)$ using Kronecker coefficients (see \pref{ss:sym-ten})
\begin{align}
\pi^*V^{(nm)}_\nu = \bigoplus_{\lambda, \mu} (V^{(n)}_\lambda \boxtimes V^{(m)}_\mu)^{\oplus g_{\lambda, \mu, \nu}}.
\end{align}
Again, this will be deduced from Schur--Weyl duality in \pref{art:sw-kronecker}.

\article[Cauchy identities] \label{ss:cauchy}
While we have mentioned that the coefficients $g_{\lambda, \mu, \nu}$ are difficult to describe, there are two easy cases which are very useful. First, if $\nu = (k)$ is the one-row partition, then $g_{\lambda, \mu, (k)} = \delta_{\lambda, \mu}$ since representations of $S_k$ are self-dual. In particular, this implies
\begin{align}
\Sym^k(\bC^n \otimes \bC^m) = \bigoplus_\lambda V^{(n)}_\lambda \boxtimes V^{(m)}_\lambda
\end{align}
where the sum is over all partitions $\lambda$ with $\ell(\lambda) \le \min(n,m)$. Second, if $\nu = (1^k)$ is the one-column partition, then $g_{\lambda, \mu, (1^k)} = \delta_{\lambda, \mu^\dagger}$ (see \pref{ss:transposesign}). This implies
\begin{align}
\bigwedge^k(\bC^n \otimes \bC^m) = \bigoplus_\lambda V^{(n)}_\lambda \boxtimes V^{(m)}_{\lambda^\dagger}
\end{align}
where the sum is over all partitions $\lambda$ with $\ell(\lambda) \le n$ and $\lambda_1 \le m$.

\article[Infinite dimensional representations]
We define an infinite dimensional representation of $G$ to be rational, resp.\ polynomial, if it is a direct sum of finite dimensional rational, resp.\ polynomial, representations.  The class of infinite dimensional rational or polynomial representations is abelian and stable under tensor products, but not duality (even in the rational case).  A typical example of the kind of infinite dimensional representations we will encounter is $\Sym(\bC^n)$, the symmetric algebra on $\bC^n$.  It is the direct sum of the various $\Sym^k(\bC^n)$'s.

\xsection{Schur--Weyl duality}
\label{s:sw}

\article \label{ss:sw}
Let $n$ and $k$ be non-negative integers.  The groups $\GL(n)$ and $S_k$ each act on the space $(\bC^n)^{\otimes k}$ --- the $\GL(n)$ action comes from its action on $\bC^n$, while the group $S_k$ acts by permuting tensor factors.  These two actions commute, and so the product group $S_k \times \GL(n)$ acts.  {\bf Schur--Weyl duality} describes how this space decomposes into irreducible representations under the product group.  Precisely, it states
\begin{align}
(\bC^n)^{\otimes k}=\bigoplus_{\lambda} \bM_{\lambda} \otimes V_{\lambda}
\end{align}
where the sum is over partitions $\lambda$ of $k$ of length at most $n$ \cite[\S 5.9]{kraftprocesi}.  This is an extremely important result, as it provides an explicit link between the representation theories of general linear groups and symmetric groups.

\begin{Example}
When $k=2$ the theorem reduces to the decomposition
\begin{displaymath}
\bC^n \otimes \bC^n = (\Sym^2(\bC^n) \otimes 1) \oplus (\bw{2}(\bC^n) \oplus \sgn).
\end{displaymath}
In words, every element of $\bC^n \otimes \bC^n$ can be written as a sum of a symmetric tensor and an anti-symmetric tensor and furthermore, the symmetric tensors form the space $\Sym^2(\bC^n)$ while the anti-symmetric tensors form the space $\bw{2}(\bC^n)$.
\end{Example}

\article[Weyl's construction] \label{ss:weyl}
An important application of Schur--Weyl duality is to the \emph{construction} of the irreducible representations of $\GL(n)$.  Indeed, it follows easily from the theorem that
\begin{displaymath}
\Hom_{S_k}(\bM_{\lambda}, (\bC^n)^{\otimes k}) = \begin{cases}
V_{\lambda} & \textrm{if $\ell(\lambda) \le n$} \\
0 & \textrm{if $\ell(\lambda)>n$.}
\end{cases}
\end{displaymath}
In fact, this was the method Weyl used to construct $V_{\lambda}$ \cite[Theorem 4.4.F]{weyl}. 

\article[Tensor product decompositions for $\GL(n)$] \label{ss:sw-ten}
Let us now show how the Schur--Weyl decomposition can be used to deduce the tensor product rule for representations of $\GL(n)$ stated in \pref{ss:gln-ten}.  Thus let $\lambda$ and $\mu$ be two partitions of length at most $n$ and of size $i$ and $j$ respectively.  We have
\begin{displaymath}
\begin{split}
V_{\lambda} \otimes V_{\mu}
&=\Hom_{S_i}(\bM_{\lambda}, (\bC^n)^{\otimes i}) \otimes \Hom_{S_j}(\bM_{\mu}, (\bC^n)^{\otimes j}) \\
&=\Hom_{S_i \times S_j}(\bM_{\lambda} \otimes \bM_{\mu}, (\bC^n)^{\otimes (i+j)}).
\end{split}
\end{displaymath}
Now, Frobenius reciprocity \cite[\S 7.2]{serre} says that giving an $S_i \times S_j$ equivariant map from $\bM_{\lambda} \otimes \bM_{\mu}$ to a representation of $S_{i+j}$ is the same as giving an $S_{i+j}$ equivariant map from the induction.  We thus have
\begin{displaymath}
\begin{split}
V_{\lambda} \otimes V_{\mu}
&=\Hom_{S_{i+j}}(\Ind_{S_i \times S_j}^{S_{i+j}}(\bM_{\lambda} \otimes \bM_{\mu}), (\bC^n)^{\otimes (i+j)}) \\
&= \bigoplus_{\nu} \Hom_{S_{i+j}}(\bM_{\nu}, (\bC^n)^{\otimes (i+j)})^{\oplus c_{\lambda,\mu}^{\nu}} \\
&= V_{\nu}^{\oplus c_{\lambda,\mu}^{\nu}}.
\end{split}
\end{displaymath}
In the second line we used the Littlewood--Richardson rule \pref{ss:lw} to decompose the induced representation.  This completes the derivation.

\article[Branching rules for $\GL(n+m)$] \label{art:sw-branching}
Now we use Schur--Weyl duality to deduce the branching rule stated in \pref{ss:GLbranching}. Set $k = |\nu|$. We have
\begin{align*}
V^{(n+m)}_\nu &= \Hom_{S_k}(\bM_\nu, (\bC^{n+m})^{\otimes k}).
\end{align*}
Now $(\bC^{n+m})^{\otimes k}$ can be written as $\bigoplus_I \bigotimes_{j=1}^k A_{j,I}$ where the sum is over all subsets of $\{1,\dots,k\}$ and $A_{j,I} = \bC^n$ if $j \in I$ and $A_{j,I} = \bC^m$ if $j \notin I$. For every $N$, the symmetric group $S_k$ preserves the sum $\bigoplus_{|I|=N} \bigotimes_j A_{j,I}$, and this representation is the induced representation
\[
\Ind^{S_k}_{S_N \times S_{k-N}}((\bC^n)^{\otimes N} \otimes (\bC^m)^{\otimes (k-n)}).
\]
Hence by Frobenius reciprocity \cite[\S 7.2]{serre}, we can write
\begin{align*}
V^{(n+m)}_\nu &= \bigoplus_{N=0}^k \Hom_{S_N \times S_{k-N}}(\bM_\nu|^{S_k}_{S_N \times S_{k-N}}, (\bC^n)^{\otimes N} \otimes (\bC^m)^{\otimes (k-N)})\\
&= \bigoplus_{N=0}^k \bigoplus_{\lambda, \mu} \Hom_{S_N \times S_{k-N}}(\bM_\lambda \otimes \bM_\mu, (\bC^n)^{\otimes N} \otimes (\bC^m)^{\otimes (k-N)})^{\oplus c^\nu_{\lambda, \mu}}\\
&= \bigoplus_{\lambda, \mu} (V^{(n)}_\lambda \boxtimes V^{(m)}_\mu)^{\oplus c^\nu_{\lambda, \mu}}.
\end{align*}

\article \label{art:sw-kronecker}
Now we use Schur--Weyl duality to deduce the identities in \pref{ss:kroneckerbranch}. First, we have
\begin{align*}
(\bC^n \otimes \bC^m)^{\otimes k} &= \bigoplus_{|\nu|=k} V^{(nm)}_\nu \boxtimes \bM_\nu
\end{align*}
as representations of $\GL(nm) \times S_k$. Alternatively, as $\GL(n) \times \GL(m) \times S_k$ representations, we have
\begin{align*}
(\bC^n)^{\otimes k} \otimes (\bC^m)^{\otimes k} &= (\bigoplus_{|\lambda|=k} V^{(n)}_\lambda \boxtimes \bM_\lambda) \otimes (\bigoplus_{|\mu|=k} V^{(m)}_\mu \boxtimes \bM_\mu)\\
&= \bigoplus_{|\lambda| = |\mu| = |\nu| = k} (V^{(n)}_\lambda \boxtimes V^{(m)}_\mu \boxtimes \bM_\nu)^{\oplus g_{\lambda, \mu, \nu}}.
\end{align*}
The result follows by taking the $\bM_\nu$-isotypic component of both expressions.

\part{The category $\cV$}

\section{Models for $\cV$}
\label{sec:models}

\subsection{The sequence and fs models} \label{sec:sequenceV}

\article
We define $\Rep(S_{\ast})$ to be the following category:
\begin{itemize}
\item Objects are sequences $V=(V_n)_{n \ge 0}$ where $V_n$ is a representation of $S_n$.  We call $V_n$ the ``degree $n$ piece'' of $V$.
\item A morphism $f\colon V \to W$ is a sequence $f=(f_n)_{n \ge 0}$ where $f_n\colon V_n \to W_n$ is a map of $S_n$-representation.
\end{itemize}
We call $\Rep(S_{\ast})$ the ``sequence model.''  One easily verifies that $\Rep(S_{\ast})$ is an abelian category.  Direct sums, kernels and cokernels are computed point-wise, e.g., if $f\colon V \to W$ is a morphism then $(\ker{f})_n=\ker(f_n)$.

\article[Simple objects]
For a partition $\lambda$ we have an irreducible representation $\bM_{\lambda}$ of $S_n$, where $n=\vert \lambda \vert$
(see \pref{ss:sym-irrep}).  We can regard these as objects of $\Rep(S_{\ast})$ by placing 0 in degrees $\ne n$.
It is clear that $\bM_{\lambda}$ defines a simple object of $\Rep(S_{\ast})$, and that any simple object is isomorphic to one of this form.  We therefore see that the isomorphism classes of $\Rep(S_{\ast})$ are in natural bijective correspondence with partitions --- there is no restriction on the length or size of the partition.  Every object of $\Rep(S_{\ast})$ is a (possibly infinite) direct sum of simple objects.

\article[Tensor product of graded vector spaces]
Let $V$ and $W$ be graded vector spaces.  There are two ways one can define a tensor product of these spaces:
\begin{displaymath}
(V \boxtimes W)_n=V_n \otimes W_n, \qquad (V \otimes W)_n=\bigoplus_{i+j=n} V_i \otimes W_j.
\end{displaymath}
For almost all purposes, the second tensor product is the correct one.  For instance, it has the favorable property that its underlying vector space is the usual tensor product of $V$ and $W$.  This is not true for the first tensor product:  for instance, if $V$ and $W$ are supported in complementary degrees then $V \boxtimes W$ vanishes!

\article[The tensor product in $\Rep(S_{\ast})$] \label{ss:repStensor}
Similarly, there are two ways one could define the tensor product of objects $V$ and $W$ of $\Rep(S_{\ast})$:
\begin{align}
(V \boxtimes W)_n&=V_n \otimes W_n, \\
(V \otimes W)_n&=\bigoplus_{i+j=n} \Ind_{S_i \times S_j}^{S_n} (V_i \otimes W_i).
\end{align}
As before, the first tensor product --- which we refer to as the {\bf point-wise tensor product} --- is usually not the one we want to use (although it will come up on occasion).  We refer to the second product simply as ``the'' tensor product.  To decompose the tensor product of two irreducibles one applies the Littlewood--Richardson rule (see \pref{ss:lw}).  The point-wise tensor product of two irreducibles in complementary degrees is zero.  To decompose the point-wise tensor products of two irreducibles in the same degree one uses the Kronecker coefficients discussed in \pref{ss:sym-ten}.

\article[The category $\Vec^{\fs}$]
Let $\fs$ denote the category whose objects are finite sets and whose morphisms are bijections of finite sets.  Let $\Vec^{\fs}$ denote the category of functors $\fs \to \Vec$.  To elaborate, we have the following description of objects and morphisms in $\Vec^{\fs}$:
\begin{itemize}
\item An object $V$ of $\Vec^{\fs}$ assigns to each finite set $L$ a vector space $V_L$ and to each bijection of finite sets $L \to L'$ an isomorphism of vector spaces $V_L \to V_{L'}$ in a manner compatible with composition.  In particular, $V_L$ is a representation of the group $\Aut(L)$ (which is isomorphic to $S_n$ with $n=\# L$).
\item A morphism $f\colon V \to V'$ in $\Vec^{\fs}$ assigns to each finite set $L$ a linear map $f_L\colon V_L \to V'_L$ such that if $L \to L'$ is a bijection of finite sets then the diagram
\begin{displaymath}
\xymatrix{
V_L \ar[r]^{f_L} \ar[d] & V'_L \ar[d] \\
V_{L'} \ar[r]^{f_{L'}} \ar[r] & V'_{L'} }
\end{displaymath}
commutes.  In particular, $f_L\colon V_L \to V'_L$ is a map of $\Aut(L)$-representations.
\end{itemize}
We call $\Vec^{\fs}$ the ``fs-model.'' The objects in this category was previously introduced by Joyal under the name of ``tensorial species'' \cite{joyal}.

\article[The equivalence between $\Rep(S_{\ast})$ and $\Vec^{\fs}$] \label{ss:sym-fs-equiv}
Let $[n]$ denote the finite set $\{1,\ldots,n\}$.  If $V$ is an object of $\Vec^{\fs}$ then $V_{[n]}$ carries a representation of $S_n$.  We thus have a functor
\begin{align}
\Vec^{\fs} \to \Rep(S_{\ast}), \qquad V \mapsto (V_{[n]})_{n \ge 0}.
\end{align}
This functor is easily seen to be an equivalence, since every object of $\fs$ is isomorphic to some $[n]$.

\article
Given that $\Rep(S_{\ast})$ and $\Vec^{\fs}$ are so obviously equivalent, one may wonder why we bother introducing $\Vec^{\fs}$ at all.  In fact, each has its place.  The category $\Rep(S_{\ast})$ is more elementary and concrete, so it can be easier to deal with at times.  However, the category $\Vec^{\fs}$ is often more natural, and many constructions can be simpler when phrased in its language.  A good example of this is the tensor product, discussed in the next section.

\article[Tensor products in $\Vec^{\fs}$]
Let $V$ and $W$ be objects of $\Vec^{\fs}$.  We define their tensor product by
\begin{align}
(V \otimes W)_L=\bigoplus_{L=A \amalg B} V_A \otimes V_B,
\end{align}
where the sum is over all partition of $L$ into two subsets.  An easy exercise shows that the equivalence $\Vec^{\fs} \to \Rep(S_{\ast})$ given in the previous section is a tensor functor, i.e., the image of $V \otimes W$ is naturally isomorphic to the tensor product of the images of $V$ and $W$.  For many purposes, the tensor product in $\Vec^{\fs}$ is easier to deal with than the one in $\Rep(S_{\ast})$, as it does not involve the complicated operation of induction.  For instance, it is plainly evident that the tensor product in $\Vec^{\fs}$ is associative.  It is not hard to show that this is the case for the one in $\Rep(S_{\ast})$, but it is certainly not as immediate.

\article
The point-wise tensor product is also easy to express in $\Vec^{\fs}$:
\begin{align}
(V \boxtimes W)_L=V_L \otimes W_L.
\end{align}

\begin{Remark}
The category $\fs$ is a monoid under disjoint union.  Intuitively, one can therefore think of $\fs$ as being like a group and one can think of $\Vec^{\fs}$ as the space of functions on it.  In this analogy, the point-wise tensor product corresponds to the point-wise product of functions, while the tensor product corresponds to convolution of functions.
\end{Remark}

\subsection{The $\GL$-model} \label{sec:GLmodelV}

\article
Let $\bC^{\infty}$ denote the vector space with basis $e_1, e_2, \ldots$.  We let $\GL(\infty)$ denote the group of automorphisms $g$ of $\bC^{\infty}$ such that $ge_i=e_i$ for $i \gg 0$.  We regard $\bC^n$ as the subspace of $\bC^{\infty}$ spanned by $e_1, \ldots, e_n$, and we regard $\GL(n)$ as a subgroup of $\GL(\infty)$ in the corresponding manner.  We can thus describe $\bC^{\infty}$ as the union of the $\bC^n$'s and $\GL(\infty)$ as the union of $\GL(n)$'s.

\begin{Proposition}
\label{ss:sw-inf}
For a partition $\lambda$ of $k$, put
\begin{align}
V_{\lambda}=\Hom_{S_k}(\bM_{\lambda}, (\bC^{\infty})^{\otimes k}).
\end{align}
We have the following:
\begin{enumerate}[\rm (a)]
\item The space $V_{\lambda}$ is a non-zero irreducible representation of $\GL(\infty)$.
\item If $V_{\lambda}$ and $V_{\mu}$ are isomorphic then $\lambda=\mu$.
\item The natural map
\begin{align}
\bigoplus_{\lambda \vdash k} \bM_{\lambda} \otimes V_{\lambda} \to (\bC^{\infty})^{\otimes k}
\end{align}
is an isomorphism of $S_k \times \GL(\infty)$ representations.
\end{enumerate}
\end{Proposition}

\begin{proof}
For a non-negative integer $n$, put
\begin{displaymath}
V_{\lambda,n}=\Hom_{S_k}(\bM_{\lambda}, (\bC^n)^{\otimes k}).
\end{displaymath}
We have inclusions $V_{\lambda,n} \subset V_{\lambda,n+1} \subset V_{\lambda}$.  As $V_{\lambda,n}$ is non-zero for $n \ge \ell(\lambda)$ (see \pref{ss:weyl}), we see that $V_{\lambda}$ is non-zero.  It is clear that the natural map
\begin{displaymath}
\varinjlim V_{\lambda,n} \to V_{\lambda}
\end{displaymath}
is an isomorphism, since any map $\bM_{\lambda} \to (\bC^{\infty})^{\otimes k}$ has image in $(\bC^n)^{\otimes k}$ for $n$ large enough.  Suppose now that $U$ is a non-zero $\GL(\infty)$-stable subspace of $V_{\lambda}$.  Then $U \cap V_{\lambda,n}$ is a $\GL(n)$-stable subspace of $V_{\lambda,n}$.  For $n$ sufficiently large, this intersection is necessarily non-empty and therefore all of $V_{\lambda,n}$, as $V_{\lambda,n}$ is irreducible for $\GL(n)$ (see \pref{ss:weyl}).  Thus $U$ contains $V_{\lambda,n}$ for $n$ large, and is therefore all of $V_{\lambda}$.  This establishes statement (a).

We now prove (b).  Let $U_n$ be the unipotent radical of the standard Borel in $\GL(n)$ and let $T_n$ be the standard maximal torus in $\GL(n)$.  Let $U$ (resp.\ $T$) be the union of the $U_n$ (resp.\ $T_n$).  Note that any partition $\lambda$ can be regarded as a weight of $T$, via
\begin{displaymath}
[a_1,a_2,\ldots] \mapsto a_1^{\lambda_1} a_2^{\lambda_2} \ldots
\end{displaymath}
Let $n \ge \ell(\lambda)$.  We know that $V_{\lambda,n}^{U_n}$ is one dimensional, and $T_n$ acts on it through the weight $\lambda$.  It is clear that the $\lambda$ weight spaces of $(\bC^n)^{\otimes k}$ and $(\bC^{n+1})^{\otimes k}$ coincide, as no tensor involving the basis vector $e_{n+1}$ can have weight $\lambda$.  Thus $V_{\lambda,n+1}^{U_{n+1}}$ is contained in $(\bC^n)^{\otimes k}$, and therefore must equal $V_{\lambda,n}^{U_n}$.  In other words, the highest weight vector in $V_{\lambda,n}^{U_n}$, with $n=\ell(\lambda)$, is invariant under $U_m$ and is acted on by $T_m$ via the weight $\lambda$, for all $m>n$.  It follows that this vector is invariant under $U$ and that $T$ acts on it through $\lambda$.  We have thus shown that $V_{\lambda}^U$ is one dimensional and $T$ acts on it through $\lambda$.  This shows that we can recover $\lambda$ from the isomorphism class of $V_{\lambda}$, which proves (b).

Statement (c) is immediate, even without having proved (a) and (b).
\end{proof}

\article[The category $\Rep^{\pol}(\GL)$]
We say that a representation of $\GL(\infty)$ is {\bf polynomial} if it appears as a subquotient of a (possibly infinite) direct sum of representations of the form $(\bC^{\infty})^{\otimes k}$.  By the previous section, every object of $\Rep^{\pol}(\GL)$ is a (possibly infinite) direct sum of objects of the form $V_{\lambda}$.  In particular, $\Rep^{\pol}(\GL)$ is a semi-simple abelian category and its simple objects are naturally indexed by partitions.  We call $\Rep^{\pol}(\GL)$ the ``$\GL$-model.''

\article[The tensor product]
We endow $\Rep^{\pol}(\GL)$ with a tensor product by using the usual tensor product of representations.  It is clear from the definition that the tensor product of two polynomial representations is again polynomial:  indeed if $V$ is a constituent of $(\bC^{\infty})^{\otimes n}$ and $W$ is a constituent of $(\bC^{\infty})^{\otimes m}$ then $V \otimes W$ is a constituent of $(\bC^{\infty})^{\otimes (n+m)}$.  The Schur--Weyl result of Proposition~\pref{ss:sw-inf} together with the derivation of \pref{ss:sw-ten} shows that the decomposition of $V_{\lambda} \otimes V_{\mu}$ into irreducibles is accomplished using the Littlewood--Richardson rule.

\article \label{art:defn:flat}
We let $T$ be the diagonal torus in $\GL(\infty)$.  A {\bf weight} of $T$ is a homomorphism $T \to \bC^\times$ (the group of nonzero complex numbers under multiplication) which depends on only finitely many matrix entries, i.e., it is of the form $\diag(a_1, a_2, \ldots) \mapsto a_1^{n_1} \cdots a_r^{n_r}$ for integers $n_1, \ldots, n_r$.  We identify weights with integer sequences which are eventually zero.  In particular, a partition defines a weight of $T$.  A weight is {\bf flat} if it consists of all 1's and 0's.  Two weights are {\bf disjoint} if their supports are disjoint.  Let $V$ be a polynomial representation of $\GL(\infty)$.  Every non-zero element $x$ of $V$ admits a unique decomposition $x=\sum_{i=1}^n x_i$ where each $x_i$ is a non-zero weight vector, and the weights of the $x_i$ are distinct.  We say that an element $x$ of $V$ is {\bf flat} if it is a weight vector and its weight is flat.  Suppose now $W$ is a second polynomial representation.  We say that two vectors $x \in V$ and $y \in W$ are {\bf disjoint} if, in the decompositions $x=\sum x_i$ and $y=\sum y_i$ into weight vectors, the weight of each $x_i$ is disjoint from the weight of each $y_j$.

\begin{Proposition}
\label{prop:disjoint}
Let $V_i$ for $1 \le i \le n$ be polynomial representations of $\GL(\infty)$, let $x_i \in V_i$ be mutually disjoint elements and let $V'_i$ be the subrepresentation of $V_i$ generated by $x_i$.  Then the subrepresentation of $V_1 \otimes \cdots \otimes V_n$ generated by $x_1 \otimes \cdots \otimes x_n$ is $V_1' \otimes \cdots \otimes V_n'$.
\end{Proposition}

\begin{proof}
By induction, we can reduce to the case $n=2$. Suppose that $V_1' = X \oplus Y$ as $\GL(\infty)$-representations, and let $\pi_X$ and $\pi_Y$ be the corresponding projections from $V_1'$. Since these projection maps are equivariant, we see that $\pi_X(x_1)$ generates $X$ and that $\pi_Y(x_1)$ generates $Y$. Also, if $\pi_1(x_1) \otimes x_2$ generates $X \otimes V'_2$ and $\pi_2(x_1) \otimes x_2$ generates $Y \otimes V'_2$, then we know that $x_1 \otimes x_2$ generates $V'_1 \otimes V'_2$. Similar remarks apply to decompositions of $V'_2$. So we can reduce to the case that $V'_1$ and $V'_2$ are both irreducible. We will make one more simplification: we will prove the result for $\GL(N)$ for all $N$ sufficiently large.

Decompose $x_1 = \sum_i v_i$ and $x_2 = \sum_j w_j$ into weight vectors. If we multiply $x_1$ and $x_2$ by enough generic elements of the maximal torus $T$, we get generic linear combinations of the $v_i$ and $w_j$, and by taking suitable linear combinations of them, we can generate the vectors $v_i \otimes w_j$ from $x_1 \otimes x_2$. Pick $v_1$ and $w_1$ two such weight vectors that appear. By applying permutations, we can assume that the support of $v_1$ is $\{1, 2, \dots, r\}$ and that the support of $w_1$ is $\{s, s+1, \dots, N\}$ where $s > r$. Now we can transform $v_1$ (and fix $w_1$) into a highest weight vector by using upper triangular matrices that fix $\{e_{r+1}, e_{r+2}, \dots, e_N\}$. Similarly, we can transform $w_1$ (and fix $v_1$) into a lowest weight vector by using lower triangular matrices that fix $\{e_1, e_2, \dots, e_{s-1}\}$. Now we appeal to the fact that the tensor product of a highest weight vector and a lowest weight vector generates $V'_1 \otimes V'_2$ \cite{tatsuuma}.
\end{proof}

\subsection{The Schur model} \label{sec:schurV}

\article
We begin by recalling some elementary terminology.  A map of finite dimensional vector spaces $f \colon V \to W$ is {\bf polynomial} if there exists bases $\{v_i\}_{1 \le i \le n}$ of $V$ and $\{w_j\}_{1 \le j \le m}$ of $W$ and polynomials $\{f_j\}_{1 \le j \le m}$ such that
\begin{equation}
f(x_1v_1+\cdots+x_n v_n) = \sum f_j(x_1, \ldots, x_n) w_j
\end{equation}
for all complex numbers $x_1, \ldots, x_n$.  A polynomial map $f$ is {\bf homogeneous of degree $n$} if each $f_i$ is.

\article
We now extend the above definitions to functors of vector spaces.  A functor $F \colon \Vec^f \to \Vec^f$ is {\bf polynomial} if for any pair of finite dimensional vector spaces $V$ and $W$, the natural map
\begin{equation}
F \colon \Hom(V, W) \to \Hom(F(V), F(W))
\end{equation}
is a polynomial map of vector spaces.  Similarly, the polynomial functor $F$ is {\bf homogeneous of degree $n$} if the above map is, for all $V$ and $W$.

\article
We denote by $\mc{S}^0$ the category of all polynomial functors $\Vec^{\fin} \to \Vec^{\fin}$ and $\mc{S}^0_n$ the subcategory of all polynomial functors which are homogeneous of degree $n$.  Both are clearly additive categories.  It is not difficult to show (see \cite[Appendix~I.A]{macdonald}) that for any $F \in \mc{S}^0$ we have a direct sum decomposition $F=\bigoplus_{n \ge 0} F_n$ with $F_n \in \mc{S}^0_n$.  Furthermore, it is clear that there are no non-zero maps between homogeneous polynomial functors of different degrees.  Therefore, in order to understand the structure of $\mc{S}^0$, it suffices to understand the structure of $\mc{S}^0_n$ for each $n$.

\article 
Let $M$ be a finite dimensional representation of $S_n$.  Define a functor $F_M \colon \Vec^{\fin} \to \Vec^{\fin}$ by
\begin{equation}
\label{eq:schur}
F_M(V) = (V^{\otimes n} \otimes M)^{S_n},
\end{equation}
where $S_n$ acts on $V^{\otimes n}$ by permuting the factors.  One easily verifies that $F_M$ is a homogeneous degree $n$ polynomial functor, and thus belongs to $\mc{S}^0_n$.  Letting $\Rep^{\fin}(S_n)$ denote the category of finite dimensional representations of $S_n$, the above construction yields a functor
\begin{equation}
\label{sw-func}
\Rep^{\fin}(S_n) \to \mc{S}^0_n, \qquad M \mapsto F_M.
\end{equation}
We then have the following important result:

\begin{theorem}[{\cite[Appendix~I.A]{macdonald}}] \label{thm:sw-func}
The functor \eqref{sw-func} is an equivalence of categories.
\end{theorem}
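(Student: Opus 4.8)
The plan is to factor the functor \eqref{sw-func} through evaluation at a single sufficiently large vector space and recognize each factor as an equivalence. Fix an integer $m \ge n$. If $F$ is homogeneous of degree $n$ then the scalar matrix $t\cdot I$ acts on $F(\bC^m)$ by $t^n$, so $F(\bC^m)$ is a degree-$n$ polynomial representation of $\GL(m)$; thus evaluation defines a functor $\mathrm{ev}_m \colon \mc{S}^0_n \to \mc{P}$, where $\mc{P}$ is the category of degree-$n$ polynomial $\GL(m)$-representations. The theorem then reduces to two claims: (i) the composite $M \mapsto F_M(\bC^m)$ is an equivalence $\Rep^{\fin}(S_n) \to \mc{P}$, and (ii) $\mathrm{ev}_m$ is an equivalence. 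Granting both, $M \mapsto F_M$ agrees up to natural isomorphism with the composite of the equivalence in (i) with a quasi-inverse of $\mathrm{ev}_m$, hence is an equivalence.

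Claim (i) is elementary. Since every finite-dimensional $S_n$-representation is self-dual (\pref{ss:transposesign}), we have $(\bM_\lambda \otimes M)^{S_n} = \Hom_{S_n}(\bM_\lambda, M)$; combining this with Schur--Weyl duality (\pref{ss:sw}) and Weyl's construction (\pref{ss:weyl}) gives
\[
F_M(\bC^m) = \bigl( (\bC^m)^{\otimes n} \otimes M \bigr)^{S_n} = \bigoplus_{\lambda \vdash n} \Hom_{S_n}(\bM_\lambda, M) \otimes V^{(m)}_\lambda ,
\]
the constraint $\ell(\lambda) \le m$ from \pref{ss:sw} being vacuous because $m \ge n$; in particular $F_{\bM_\lambda}(\bC^m) = V^{(m)}_\lambda$. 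So the composite is an additive functor between the semisimple categories $\Rep^{\fin}(S_n)$ and $\mc{P}$ that carries the complete list of simple objects $\{\bM_\lambda\}_{\lambda \vdash n}$ bijectively onto the complete list of simple objects $\{V^{(m)}_\lambda\}_{\lambda \vdash n}$ of $\mc{P}$ (the latter by \pref{ss:highestweight}, using $m \ge n$) and is an isomorphism on the one-dimensional endomorphism algebra of each simple; such a functor is necessarily an equivalence.

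Claim (ii) is the technical heart, and is the step I expect to cost the most work. Essential surjectivity of $\mathrm{ev}_m$ onto $\mc{P}$ is immediate from the computation above, since $\mc{P}$ is generated under finite direct sums by the $V^{(m)}_\lambda = \mathrm{ev}_m(F_{\bM_\lambda})$; the content is full faithfulness: a homogeneous degree-$n$ polynomial functor, and every natural transformation between two of them, must be recoverable from the single $\GL(m)$-representation $F(\bC^m)$ --- equivalently, $\mathrm{ev}_m$ must identify $\mc{S}^0_n$ with the category of modules over the Schur algebra $\End_{S_n}\bigl((\bC^m)^{\otimes n}\bigr)$, via the module structure that $F(\bC^m)$ carries through functoriality in $\Hom(\bC^m, \bC^m)$. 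A concrete instance that already exhibits the mechanism, and which I would check first, is that $\End_{\mc{S}^0_n}(V \mapsto V^{\otimes n})$ is spanned by the permutation natural transformations: any natural endomorphism of $V \mapsto V^{\otimes n}$ restricts at $\bC^m$ to a $\GL(m)$-equivariant endomorphism of $(\bC^m)^{\otimes n}$, which by the double-centralizer half of Schur--Weyl duality (\pref{ss:sw}) is a linear combination of permutations; naturality with respect to the inclusions $\bC^m \hookrightarrow \bC^{m'}$ then forces the coefficients to be independent of $m$ and hence pins the transformation down on all of $\Vec^{\fin}$. The general case of (ii) is the analogous reconstruction for an arbitrary $F$, identifying natural transformations with maps of Schur-algebra modules; this is precisely the content of \cite[Appendix~I.A]{macdonald}.
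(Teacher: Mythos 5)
The paper itself gives no proof here, only the citation to Macdonald, so the comparison is really between your sketch and the argument that reference supplies. Your factorization through evaluation at $\bC^m$, $m\ge n$, is a valid organizing strategy, and claim (i) is correct and complete: the composite is an additive $\bC$-linear functor between two semisimple categories with one-dimensional endomorphism rings, and it carries the complete list of simples bijectively onto the complete list of simples, so it is automatically an equivalence. The issue is claim (ii). Full faithfulness of $\mathrm{ev}_m$ is not a lighter-weight fact than the theorem; it is essentially the theorem restated, and the argument you give only treats $F=G=(\cdot)^{\otimes n}$. The step that does not automatically generalize is ``naturality \dots\ pins the transformation down on all of $\Vec^{\fin}$'': for the tensor power this works because $V^{\otimes n}$ is spanned by the images of $(\bC^n)^{\otimes n}$ under linear maps $\bC^n\to V$, but for an arbitrary $F\in\mc{S}^0_n$ the analogous statement --- that $F(V)$ is spanned by the images of $F(\bC^n)\to F(V)$ over all $\bC^n\to V$ --- is the polarization (cross-effects) lemma, and extracting it from the polynomiality hypothesis is precisely the nontrivial content of Macdonald's appendix. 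Macdonald does not factor through a fixed $\bC^m$; he constructs the inverse functor directly, sending $F$ to the multilinear weight $(1,\dots,1)$ subspace of $F(\bC^n)$ (cf.\ \pref{art:gl2seq}) and proving $F\cong F_M$ by polarization. Your route does buy something: a clean separation of the $\GL(m)$-equivariant bookkeeping (claim (i), handled by Schur--Weyl and highest weight theory) from the genuinely functorial input (claim (ii)). But since you ultimately defer claim (ii) back to the same citation, as a standalone argument there is still a gap exactly where the polarization lemma is needed.
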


Obviously, we can extend $M \mapsto F_M$ to an additive functor $\Rep^{\fin}(S_{\ast}) \to \cS^0$ which is an equivalence of categories.

\article \label{art:schurfunctordefn}
We let $\bS_{\lambda}$ be the image of the irreducible $\bM_{\lambda}$ under the functor \eqref{sw-func}.  It is called the {\bf Schur functor} associated to the partition $\lambda$.  As a corollary of Theorem~\pref{thm:sw-func}, we see that $\mc{S}^0_n$ is semi-simple and the $\bS_{\lambda}$ with $\vert \lambda \vert =n$ are the simple objects.  From \pref{ss:weyl} we see that $\bS_{\lambda}(\bC^n)$ is the irreducible representation $V_{\lambda}$ of $\GL(n)$ if $\ell(\lambda) \le n$ and 0 otherwise.  The same manipulation as in \pref{ss:sw-ten} shows that tensor products of Schur functors decompose using the Littlewood--Richardson rule; in fact, these same manipulations show that $M \mapsto F_M$ is a tensor functor.

\article
The functor $F_M$ defined in \eqref{eq:schur} makes sense on infinite dimensional vector spaces as well, and defines a functor $F_M \colon \Vec \to \Vec$.  We define $\cS$ to the full subcategory of $\Fun(\Vec, \Vec)$ on objects which are isomorphic to $F_M$ with $M \in \Rep(S_{\ast})$, i.e., we allow infinite direct sums of the $\bS_{\lambda}$.  From the above results, once can show that $M \mapsto F_M$ defines an equivalence of tensor categories $\Rep(S_{\ast}) \to \cS$.  We call $\cS$ the {\bf Schur model}.  One can characterize $\cS$ as the category of functors which are, in a suitable sense, polynomial and which commute with direct limits.  An example of a functor which does not belong to $\cS$ is the double dual.

\subsection{Equivalences}

\article
We have the following fundamental theorem:

\begin{theorem}
\label{equivthm}
The following four symmetric tensor categories are equivalent:
\begin{displaymath}
\Rep(S_{\ast}), \qquad
\Vec^{\fs}, \qquad
\Rep^{\pol}(\GL), \qquad
\mc{S}.
\end{displaymath}
\end{theorem}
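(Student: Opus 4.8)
The plan is to treat $\Rep(S_{\ast})$ as a hub and build a symmetric tensor equivalence from it to each of the other three categories; chaining these gives the stated four-way equivalence. Two of the three legs are already available from earlier in the paper. The assignment $V \mapsto (V_{[n]})_{n \ge 0}$ is an equivalence $\Vec^{\fs} \to \Rep(S_{\ast})$ by \pref{ss:sym-fs-equiv}, and, as observed when the tensor product on $\Vec^{\fs}$ was introduced, it carries the disjoint-union product $\bigoplus_{L = A \amalg B} V_A \otimes W_B$ to the induction product $\bigoplus_{i+j=n} \Ind_{S_i \times S_j}^{S_n}(V_i \otimes W_j)$ of \pref{ss:repStensor}; one then checks it respects the unit and the symmetry constraint (the block-swap twist on the $\Rep(S_{\ast})$ side) as well. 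Similarly $M \mapsto F_M$ with $F_M(V) = (V^{\otimes n} \otimes M)^{S_n}$ is an equivalence $\Rep^{\fin}(S_n) \to \cS^0_n$ by Theorem~\pref{thm:sw-func}; extending additively over $n$ and allowing infinite direct sums yields the equivalence $\Rep(S_{\ast}) \to \cS$ recorded at the end of \S\ref{sec:schurV}, and the Schur--Weyl manipulation of \pref{ss:sw-ten} exhibits it as a tensor functor.

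The remaining leg is $\cS \to \Rep^{\pol}(\GL)$ (equivalently $\Rep(S_{\ast}) \to \Rep^{\pol}(\GL)$). First I would introduce the evaluation functor $\Phi \colon \cS \to \Rep^{\pol}(\GL)$, $\Phi(F) = F(\bC^{\infty})$, with $\GL(\infty)$ acting through functoriality of $F$ on automorphisms of $\bC^{\infty}$. Because each $\bS_{\lambda}$ is assembled from copies of $(\bC^{\infty})^{\otimes k}$, the output lands in polynomial representations; $\Phi$ is plainly additive, exact (everything is semisimple in characteristic $0$), and commutes with arbitrary direct sums. By \pref{art:schurfunctordefn} together with the fact that functors in $\cS$ commute with direct limits, $\Phi(\bS_{\lambda}) = \bS_{\lambda}(\bC^{\infty}) = \varinjlim_n \bS_{\lambda}(\bC^n) = V_{\lambda}$. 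Now Proposition~\pref{ss:sw-inf}(a),(b) says each $V_{\lambda}$ is a nonzero irreducible and $\lambda \mapsto V_{\lambda}$ is injective on isomorphism classes, while by the classification in \S\ref{sec:GLmodelV} the $V_{\lambda}$ are exactly the simple objects of $\Rep^{\pol}(\GL)$. Hence $\Phi$ induces a bijection on isomorphism classes of simples and is an isomorphism on each $\End(\bS_{\lambda}) = \bC \to \End(V_{\lambda}) = \bC$ (it preserves identities); since both sides are semisimple with all coproducts, this forces $\Phi$ to be an equivalence of abelian categories.

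To upgrade $\Phi$ to a symmetric monoidal equivalence I would use that the tensor product on $\cS$ computes the pointwise product of functors: the manipulation of \pref{ss:sw-ten} gives a natural isomorphism $F_M(V) \otimes F_N(V) \cong F_{M \otimes N}(V)$ in $V$ (both sides decompose by Littlewood--Richardson, cf.\ \pref{ss:gln-ten}), so evaluating at $\bC^{\infty}$ produces a natural isomorphism $\Phi(F \otimes G) \cong \Phi(F) \otimes \Phi(G)$ that is compatible with associativity, unit, and symmetry --- on $\cS$ and on $\Rep^{\pol}(\GL)$ the symmetry is just the flip of tensor factors, which matches the twisted symmetry on $\Rep(S_{\ast})$ under the equivalence of the first paragraph. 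Composing the three legs then gives the equivalence of symmetric tensor categories asserted in the theorem.

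The conceptual content here is light: all four categories are semisimple with simple objects indexed by all partitions, so once the comparison functors are written down the abelian-category part of the statement is nearly automatic. The genuine work --- and the only place where care is really needed --- is the monoidal bookkeeping: verifying that each comparison functor satisfies the pentagon and hexagon coherences and intertwines the symmetry constraints, with honest attention paid to the block-swap permutation $\tau$ that twists commutativity on the $\Rep(S_{\ast})$ and $\Vec^{\fs}$ side as against the ordinary tensor flip on the $\Rep^{\pol}(\GL)$ and $\cS$ side.
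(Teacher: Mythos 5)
Your proposal is correct and takes essentially the same route as the paper: it constructs the equivalence using the same three explicit comparison functors (fs-to-sequence via $V\mapsto(V_{[n]})$, sequence-to-Schur via $M\mapsto F_M$, Schur-to-GL via evaluation at $\bC^{\infty}$) and then verifies compatibility with the tensor and symmetry data. The only difference is organizational --- you treat $\Rep(S_{\ast})$ as a hub, whereas the paper lists a cycle of four equivalences (including the explicit weight-space functor $\Rep^{\pol}(\GL)\to\Rep(S_{\ast})$) --- but the mathematical content is the same.
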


We have essentially proved this theorem already, as we know that each category is semi-simple and the rule for decomposing tensor products is the same in each.  However, we wish to give explicit equivalences.

\article[From fs to sequence]
We have already done this in \pref{ss:sym-fs-equiv}, but we state it again here for completeness.  There is a functor
\begin{align}
\Vec^{\fs} \to \Rep(S_{\ast}), \qquad V \mapsto (V_{[n]})_{n \ge 0}.
\end{align}
It is easily seen to be an equivalence and respect the tensor structure.

\article[From Schur to GL]
There is a natural functor
\begin{align}
\mc{S} \to \Rep^{\pol}(\GL), \qquad F \mapsto F(\bC^{\infty}).
\end{align}
This takes $\bS_{\lambda}$ to the representation $V_{\lambda}$ discussed in \pref{ss:sw-inf}.  It also commutes with infinite direct sums.  It is therefore an equivalence.  It is obviously compatible with the tensor structure.

\article[From sequence to Schur]
Let $V=(V_n)$ be an object of $\Rep(S_{\ast})$.  For a vector space $T$, put
\begin{align}
S_V(T)=\bigoplus_{n \ge 0} (T^{\otimes n} \otimes V_n)_{S_n}.
\end{align}
Then $S_V$ is an object of $\mc{S}$, and so we have a functor
\begin{align}
\Rep(S_{\ast}) \to \mc{S}, \qquad V \mapsto S_V.
\end{align}
By definition, this functor takes $\bM_{\lambda}$ to $\bS_{\lambda}$.  It also commutes with infinite direct sums.  It is therefore an equivalence by the structure of the two categories.  We have
\begin{displaymath}
\begin{split}
S_{V \otimes W}(T)
&=\bigoplus_{n \ge 0} \bigoplus_{i+j=n} (T^{\otimes (i+j)} \otimes \Ind_{S_i \times S_j}^{S_{i+j}}(V_i \otimes W_j))_{S_{i+j}} \\
&=\bigoplus_{i,j \ge 0} (T^{\otimes i} \otimes V_i)_{S_i} \otimes (T^{\otimes j} \otimes W_j)_{S_j} \\
&=S_V(T) \otimes S_W(T).
\end{split}
\end{displaymath}
In the third line we used Frobenius reciprocity \cite[\S 7.2]{serre}.  This gives a direct proof that $V \mapsto S_V$ is a tensor functor.

\article[From GL to sequence] \label{art:gl2seq}
Let $V$ be a polynomial representation of $\GL(\infty)$.  Let $V_{[n]}$ be the weight space for the weight $1^n=(1,1,\ldots,1,0,0,0,\ldots)$.  Then $S_n$ acts on $V_{[n]}$. We obtain a functor
\begin{align}
\Rep^{\pol}(\GL) \to \Rep(S_{\ast}), \qquad V \mapsto (V_{[n]})_{n \ge 0}.
\end{align}
A computation shows that this takes $V_{\lambda}$ to $\bM_{\lambda}$.  As it commutes with direct sums, it is an equivalence.  It can also be seen to be a tensor functor.

\xsection{The category $\cV$ and its properties} \label{sec:cvprop}

\subsection{The basics}
\label{ss:cvbasic}

\article[The category $\cV$]
We define $\cV$ to be any of the four categories of Theorem~\pref{equivthm}.  We prefer to think of $\cV$ and its objects abstractly, while we think of the four categories of Theorem~\pref{equivthm} as explicit ``models'' for $\cV$.  We will often switch between the various models when working in $\cV$, as some are more suited to certain tasks than others.  In the remainder of this section, we discuss other structures on $\cV$ and some other models for it.

\article[Finiteness conditions]
The category $\cV$ has arbitrary direct sums, and so some of its objects are quite large.  There are two finiteness conditions of interest to us:
\begin{itemize}
\item An object of $\cV$ has {\bf finite length} if it is a finite direct sum of simple objects.  We write $\cV_{\fin}$ for the full subcategory on the objects of finite length.
\item An object of $\cV$ is {\bf graded-finite} if every simple appears with finite multiplicity.  We write $\cV_{\gfin}$ for the full subcategory on the graded-finite objects.
\end{itemize}
These finiteness conditions admit nice descriptions in the sequence model:  an object $V$ of $\Rep(S_{\ast})$ is graded finite if and only if each $V_n$ is finite dimensional, while it has finite length if and only if it is graded finite and furthermore only finitely many $V_n$ are non-zero.

\article[Grading]
\label{ss:grade}
Let $\cV_n$ be the full subcategory of $\cV$ consisting of objects which are direct sums of simple objects of the form $\bS_{\lambda}$ with $\vert \lambda \vert=n$.  We call $\cV_n$ the ``degree $n$ piece'' of $\cV$.  The category $\cV$ decomposes as a direct sum of the $\cV_n$, meaning any object $V$ of $\cV$ admits a decomposition of the form $\bigoplus_n V_n$ where each $V_n$ belongs to $\cV_n$.  Thus every object of $\cV$ is canonically graded.  An object of $V$ is graded-finite if and only if its graded pieces are of finite length. The tensor product respects this grading, i.e., $\otimes \colon \cV_i \times \cV_j \to \cV_{i+j}$.

We now give a different description of the grading in the sequence model.  We can regard $\Rep(S_n)$ as a subcategory of $\Rep(S_{\ast})$ via extension by zero.  That is, if $V$ is a representation of $S_n$ then we put $V_n=V$ and $V_k=0$ for $k \ne n$ to obtain an object $V$ of $\Rep(S_{\ast})$.  Clearly, $\Rep(S_n)$ corresponds to $\cV_n$.

We now give yet another description of the grading, this time in the GL model.  For $z \in \bC^{\infty}$ let $[z]_n$ denote the diagonal element of $\GL(\infty)$ whose first $n$ entries are $z$ and whose remaining entries are 1.  The group $\GL(\infty)$ as we have defined it has trivial center, but the elements $[z]_n$ can be thought of as being approximately central.  One way this manifests is as follows:  if $v$ is any element of $(\bC^{\infty})^{\otimes k}$ then $[z]_nv=z^k v$ for all $n$ sufficiently large.  More generally, a polynomial representation $V$ belongs to $\cV_k$ if and only if for all $v \in V$ we have $[z]_nv=z^k v$ for $n \gg 0$.  In this way, the grading on a polynomial representation of $\GL(\infty)$ can be seen as being induced by the action of the ``center.''

\article[The objects $\bC\langle n \rangle$]
The functor $\Rep(S_{\ast}) \to \Vec$ taking $V$ to its degree $n$ piece $V_n$ is representable by an object $\bC\langle n \rangle$.  That is, there is a natural isomorphism
\begin{align}
\Hom_{\Rep(S_{\ast})}(\bC\langle n \rangle, V)=V_n.
\end{align}
The object $\bC\langle n \rangle$ is easy to describe:  it is the regular representation $\bC[S_n]$ of $S_n$ in degree $n$ and 0 in all other degrees.  In the GL model, $\bC \langle n \rangle$ is given by the object $(\bC^{\infty})^{\otimes n}$ while in the Schur model it is given by the functor $V \mapsto V^{\otimes n}$.  We have 
\begin{align}
\bC\langle n \rangle \otimes \bC \langle m \rangle=\bC\langle n+m \rangle.
\end{align}

For an object $V$ of $\cV$, we let $V \langle n \rangle$ denote the tensor product $V \otimes \bC\langle n \rangle$.  In the sequence model, $(V\langle n \rangle)_k$ is given by 
\begin{align}
(V\langle n \rangle)_k = \Ind_{S_{k-n} \times S_n}^{S_k}(V_{k-n} \otimes \bC[S_n]) = \Ind_{S_{k-n}}^{S_k}(V_{k-n}),
\end{align}
and so can be computed by repeated application of the Pieri rule (see \pref{ss:pieri}).  We regard vector spaces as degree 0 objects of $\cV$.  Thus for a vector space $U$ we have an object $U\langle n \rangle$ of $\cV$.  One should think of $\langle n \rangle$ as a sort of shift of grading functor:  if $V$ has degree $m$ then $V\langle n \rangle$ has degree $n+m$.  However, since $\bC\langle n \rangle$ has no tensor inverse, there is no way to undo this shift in grading.

\article[Transpose] \label{ss:tp1}
As we have discussed, there is a notion of transpose for partitions and this corresponds to twisting by the sign character on irreducible representations of $S_n$.  This operation carries over and defines an involution on the category $\cV$, which we denote by $V \mapsto V^{\dag}$ and call {\bf transpose}.  In the sequence model, the transpose functor is defined by $(V^{\dag})_n=V_n \otimes \sgn$.  Transpose is a tensor functor, but not a \emph{symmetric} tensor functor.  This point is very important, but somewhat subtle; an in-depth discussion is given in \S\ref{s:transp}.

\article[Duality]
Let $V$ be an object of $\Rep(S_{\ast})$.  We define the {\bf dual} of $V$, denoted $V^{\vee}$, by $(V^{\vee})_n=(V_n)^*$, where here $U^*$ denotes the dual vector space of a vector space $U$.  Duality defines a functor $\cV \to \cV^{\op}$.  There is a canonical map $V \to (V^{\vee})^{\vee}$, which is an isomorphism if (and only if) $V$ is graded-finite.  Thus duality provides an equivalence
\begin{align}
(\cdot)^\vee \colon \cV_{\gfin} \xrightarrow{\cong} \cV_{\gfin}^{\op}.
\end{align}
Duality interacts well with the tensor product on graded-finite objects:  if $V$ and $W$ are graded finite then the natural map 
\begin{align}
V^{\vee} \otimes W^{\vee} \xrightarrow{\cong} (V \otimes W)^{\vee}
\end{align}
is an isomorphism.  Note that duality does not change the isomorphism class of simple objects; more generally, if $V$ is graded-finite then $V$ and $V^{\vee}$ are isomorphic, though not in any canonical way.

Duality can be described in the $\GL$-model as follows:
\begin{align}
V^{\vee}=\Hom_{\GL(\infty)}(V, \Sym(\bC^{\infty} \otimes \bC^{\infty})).
\end{align}
Here we regard $\GL(\infty) \times \GL(\infty)$ as acting on $\bC^{\infty} \otimes \bC^{\infty}$ and we take maps which are equivariant with respect to the first $\GL$; the second copy of $\GL$ then acts on the $\Hom$ space.  To see that this coincides with the definition given in the sequence model, note that
\begin{displaymath}
\Sym(\bC^{\infty} \otimes \bC^{\infty})=\bigoplus \bS_{\lambda}(\bC^{\infty}) \otimes \bS_{\lambda}(\bC^{\infty}),
\end{displaymath}
where the sum is over all partitions (this identity is discussed further in \pref{ss:cauchy}).  Thus 
\begin{align}
\bS_{\lambda}(\bC^{\infty})^{\vee} = \bS_{\lambda}(\bC^{\infty}).
\end{align}
This shows that $V \mapsto V^{\vee}$ is a contravariant functor which does not change the isomorphism class of simple objects; it must therefore coincide with the functor constructed in the sequence model.

\begin{remark}
If $V$ is a polynomial representation of $\GL(\infty)$ then $V^*$ is (in general) \emph{not} a polynomial representation of $\GL(\infty)$.  In particular, the duality discussed above is not the same as taking the linear dual.
\end{remark}

\subsection{Co-addition and co-multiplication}

\article[Tensor powers of $\cV$]
One can make sense of the tensor product of two abelian categories, at least under certain assumptions; see \cite[\S 5]{Deligne} for a general discussion. Given rings $R, S$, this tensor product behaves well with respect to module categories: $\Mod_R \otimes \Mod_S \cong \Mod_{R \otimes S}$. The tensor power $\cV^{\otimes r}$ exists, and admit models similar to those of $\cV$:
\begin{itemize}
\item The sequence model consists of families of vector spaces $(V_{n_1, \ldots, n_r})$ indexed by elements of $\bZ_{\ge 0}^r$ such that $V_{n_1, \ldots, n_r}$ is equipped with an action of $S_{n_1} \times \cdots \times S_{n_r}$.
\item The fs-model consists of functors $\fs^r \to \Vec$, i.e., functors which take $r$ finite sets and yield a vector space.
\item The GL-model consists of polynomial representations of the group $\GL(\infty)^r$.  Such representations can be described as those which appear as a constituent of a representation of the form $(\bC^{\infty})^{\otimes n_1} \otimes \cdots \otimes (\bC^{\infty})^{\otimes n_r}$, or a direct sum of such representations.
\item The Schur model consists of polynomial functors $\Vec^r \to \Vec$.  Here a functor $F$ is polynomial if $F(V_1, \ldots, V_r)$ is a polynomial representation of $\GL(V_1) \times \cdots \times \GL(V_r)$ (in the evident sense) whenever the $V_i$'s are finite dimensional, and $F$ satisfies a certain continuity condition.
\end{itemize}
The category $\cV^{\otimes r}$ is semi-simple and its simple objects are all external tensor products of simple objects of $\cV$.  In the GL-model this just means that every polynomial representation of $\GL(\infty)^r$ decomposes as a direct sum of irreducible representations, and that each simple is a tensor product of irreducible representations of $\GL(\infty)$.  In the Schur model, this amounts to the slightly less obvious statement that a polynomial functor $F\colon \Vec^r \to \Vec$ admits a decomposition of the form
\begin{align}
F(V_1, \ldots, V_r)=\bigoplus_{i \in I} \bS_{\lambda_{i,1}}(V_1) \otimes \cdots \otimes \bS_{\lambda_{i,r}}(V_r)
\end{align}
for some index set $I$ and partitions $\lambda_{i,j}$.

\article
In what follows, we write $F \uotimes G$ for the object in $\cV^{\otimes 2}$ given by the external tensor product of $F$ and $G$ in $\cV$.  Thus, in the Schur model, $F$ and $G$ are functors $\Vec \to \Vec$ while $F \uotimes G$ is the functor $\Vec^2 \to \Vec$ given by $(V, W) \mapsto F(V) \otimes G(W)$.

\article[Co-addition] \label{ss:coadd}
Let $F$ be a polynomial functor $\Vec \to \Vec$.  Then the functor $\Vec^2 \to \Vec$ given by $(V, W) \mapsto F(V \oplus W)$ is a polynomial functor, and thus defines an object of $\cV^{\otimes 2}$, which we denote by $a^*F$.  We thus have a functor
\begin{align}
a^*\colon \cV \to \cV^{\otimes 2}
\end{align}
which we call {\bf co-addition}.

\article
On simple objects, co-addition is computed using the Littlewood--Richardson rule:
\begin{align}
a^*(\bS_{\lambda})=\bigoplus_{\mu,\nu} (\bS_{\mu} \uotimes \bS_{\nu})^{\oplus c_{\mu,\nu}^{\lambda}},
\end{align}
where the sum is over partitions $\mu$, $\nu$ with $\vert \mu \vert+\vert \nu \vert=\vert \lambda \vert$.  This is the identity
\begin{displaymath}
\bS_{\lambda}(V \oplus W)
= \bigoplus_{\mu,\nu} (\bS_{\mu}(V) \otimes \bS_{\nu}(W))^{\oplus c_{\mu,\nu}^{\lambda}}
\end{displaymath}
which is discussed in \pref{ss:GLbranching}.

\article
One can also see co-addition easily in the fs-model.  Disjoint union provides a functor $\fs^2 \to \fs$.  Thus, given a functor $F\colon \fs \to \Vec$ we obtain a functor $\fs^2 \to \Vec$ by composing with disjoint union; the result is $(L, L') \mapsto F_{L \amalg L'}$.  This defines a map $\cV=\Vec^{\fs} \to \Vec^{\fs^2}=\cV^{\otimes 2}$ which coincides with the co-addition map discussed above.

\article
Let us give an important example.  The only way $\bS_{\lambda} \otimes \bS_{\mu}$ can contain a copy of $\Sym^n$ is if $\bS_{\lambda}=\Sym^i$ and $\bS_{\mu}=\Sym^j$ and $i+j=n$.  (Reason: the Littlewood--Richardson rule shows that all constituents of a tensor product have at least the number of rows of the factors, so the only way to get a constituent with one row is it both factors have only one rows.)  In other words, the coefficient $c^{(n)}_{\mu,\nu}$ is non-zero only when $\mu=(i)$ and $\nu=(j)$ with $i+j=n$; it is then equal to 1.  This gives a ``binomial theorem'' (the ``binomial theorem'' for exterior powers is obtained in a similar way)
\begin{align}
\Sym^n(V \oplus W)&=\bigoplus_{i+j=n} \Sym^i(V) \otimes \Sym^j(W),\\
\bigwedge^n(V \oplus W)&=\bigoplus_{i+j=n} \bigwedge^i(V) \otimes \bigwedge^j(W).
\end{align}
These can also be proven directly without any prior knowledge about Schur functors.

\article[Co-multiplication] \label{ss:comult}
The same discussion as in \pref{ss:coadd} applies if we use tensor product instead of direct sum.  We thus obtain a {\bf co-multiplication} map
\begin{align}
m^*\colon \cV \to \cV^{\otimes 2}.
\end{align}
In the Schur model $m^*$ is given by $(m^*F)(V,W)=F(V \otimes W)$.  
On simple objects, co-multiplication is computed using the Kronecker coefficients $g_{\lambda,\mu,\nu}$ (see \pref{ss:sym-ten}).  Precisely,
\begin{align}
m^*(\bS_{\lambda})=\bigoplus_{\mu,\nu} (\bS_{\mu} \uotimes \bS_{\nu})^{\oplus g_{\lambda,\mu,\nu}},
\end{align}
where the sum is over partitions $\mu$, $\nu$ of the same size as $\lambda$. This was discussed in \pref{ss:kroneckerbranch}.

\article
\label{schur:cauchy}
Also, as discussed in \pref{ss:cauchy}, we get the following two Cauchy identities
\begin{align*}
\Sym^n(V \otimes W)&=\bigoplus_{\lambda \vdash n} \bS_{\lambda}(V) \otimes \bS_{\lambda}(W),\\
\bw{n}(V \otimes W)&=\bigoplus_{\lambda \vdash n} \bS_{\lambda}(V) \otimes \bS_{\lambda^{\dag}}(W).
\end{align*}

\subsection{Composition (plethysms)} \label{ss:comp}

\article
The composition of two polynomial functors $\Vec \to \Vec$ is again a polynomial functor.  We thus obtain a functor
\begin{align}
\cV \times \cV \to \cV, \qquad (F, G) \mapsto F \circ G,
\end{align}
which we call {\bf composition}.

\article
Determining the decomposition of the composition of two simple objects is known as the {\bf plethysm problem} and is notoriously difficult.  We mention some known results.  The decomposition of $\bS_{\lambda}^{\otimes 2}$ is known by the Littlewood--Richardson rule.  Determining $\Sym^2(\bS_{\lambda})$ and $\bw{2}(\bS_{\lambda})$ amounts to understanding the action of $S_2$ on the multiplicity spaces of the Littlewood--Richardson rule.  This has been done in \cite{carreleclerc}, and so these plethysms are known. The case when $\lambda$ is $(n)$ or $(1^n)$ has a simpler formula \cite[Example I.8.9(a)]{macdonald}. Formulas for $\bS_\lambda \circ \Sym^n$ when $|\lambda|=3$ are given in \cite[Example I.8.9(b)]{macdonald}. As far as we are aware, the only plethysms $\bS_{\lambda} \circ \bS_{\mu}$ that are known when $\lambda$ is large are when both $\bS_{\lambda}$ and $\bS_{\mu}$ are either symmetric or wedge powers and $\vert \mu \vert=2$.  For example, we have
\begin{align*}
\Sym^n \circ \Sym^2=\bigoplus_{\lambda \vdash n} \bS_{2\lambda}.
\end{align*}
For this formula and the other 3 variations, see \cite[Example I.8.6]{macdonald}. In particular, these plethysms are multiplicity-free.  This is not true for general plethysms.

\article
Composition can also be seen in the sequence model.  Let us restrict our attention to two simple objects $\bM_{\lambda}$ and $\bM_{\mu}$ of degrees $n$ and $m$.  Then the composition is given by
\begin{align}
\bM_{\lambda} \circ \bM_{\mu}=\Ind_{S_n \rtimes S_m^n}^{S_{nm}} (\bM_{\lambda} \otimes \bM_{\mu}^{\otimes n}).
\end{align}
Let us explain what this equation means.  First, think of a set with $nm$ elements organized into $n$ columns, each with $m$ elements.  Then $S_m^n$ acts by permuting the elements within each column, while $S_n$ acts by permuting the columns themselves.  The action of $S_n$ normalizes that of $S_m^n$, and this realizes the semi-direct product $S_n \rtimes S_m^n$ as a subgroup of $S_{nm}$.  The space $\bM_{\mu}^{\otimes n}$ is a representation of $S_m^n$ and extends to a representation of $S_n \rtimes S_m^n$ with $S_n$ acting by permuting the factors.  The space $\bM_{\lambda}$ is a representation of $S_n \rtimes S_m^n$ via the projection from this group to $S_n$.

\begin{Example}
We see that
\begin{displaymath}
\bM_{(n)} \circ \bM_{(2)}=\Ind_{S_n \rtimes S_2^n}^{S_{2n}}(1).
\end{displaymath}
Let $\mc{M}_{2n}$ denote the set of perfect undirected matchings on $2n$ vertices.  (Such a matching is a graph in which each vertex belongs to exactly one edge.)  The group $S_{2n}$ acts transitively on the set of matchings, and the stabilizer of any element is a subgroup of the form $S_n \rtimes S_2^n$.  Thus the above induction is equivalent to the permutation representation on $\mc{M}_{2n}$.  Combining this discussion with the formula for $\Sym^n \circ \Sym^2$ given earlier (which is just $\bM_{(n)} \circ \bM_{(2)}$ in the Schur model), we see that
\begin{displaymath}
\mc{M}_{2n}=\bigoplus_{\lambda \vdash n} \bM_{2\lambda}.
\end{displaymath}
See also \cite[Example 7.A2.9]{stanley}.
\end{Example}

\article
Composition can be stated in the fs-model as follows:
\begin{displaymath}
(F \circ G)_L=\bigoplus \left[ F_I \otimes \bigotimes_{i \in I} G_{U_i} \right].
\end{displaymath}
Here the sum is over all partitions $\cU=\{U_i\}_{i \in I}$ of the set $L$.  To be more careful, we should sum over all equivalence relations $\sim$ on $L$, let $U_i$ be the equivalence classes and let $I=L/\sim$.

\subsection{The Schur derivative} \label{ss:deriv}

\article
Let $F$ be a polynomial functor $\Vec \to \Vec$.  We define the {\bf Schur derivative} of $F$, denote $\bD(F)$, to be the functor $\Vec \to \Vec$ given by
\begin{align}
(\bD F)(V)=F(V \oplus \bC)^{(1)},
\end{align}
where the superscript denotes the subspace on which $\bC^{\times}$ acts through its standard character.  In other words, one expands $F(V \oplus \bC)$ (which can be done using the Littlewood--Richardson rule, see \pref{ss:coadd}) and then takes the $\bC^\times$-isotypic component of $\bS_{(1)}(\bC)$.  From this description, it is evident that $\bD \bS_{\lambda}$ is computed by the Pieri rule; in fact,
\begin{align}
\bD \bS_{\lambda}=\bigoplus_{\mu,\, \lambda/\mu \in \HS_1} \bS_{\mu}
\end{align}
For example, we have $\bD(\Sym^n)=\Sym^{n-1}$, and so $\bD(\Sym)=\Sym$.  These identities are manifestations of the analogies between $\Sym^n$ and the divided power $\gamma_n(x)=x^n/n!$, and between the symmetric algebra $\Sym$ and the exponential function.

\article
The Schur derivative can also be seen easily in the sequence model.  If $V$ is an object of $\Rep(S_{\ast})$ then $\bD V$ is the object with $(\bD V)_n=V_{n+1}$.  That is, one simply restricts the representation $V_{n+1}$ from $S_{n+1}$ to $S_n$.  From this point of view, it is clear that $\bD$ is adjoint (both left and right) to the shift functor $\langle 1 \rangle$.  Thus $\bD$ is the closest thing there is to an inverse to $\langle 1 \rangle$.

\article
The Schur derivative satisfies many of the usual properties of the derivative. 
\begin{itemize}
\item Additivity:
$\bD(F \oplus G)=\bD(F) \oplus \bD(G)$
\item Leibniz rule:
$\bD(F \otimes G)=(\bD(F) \otimes G) \oplus (F \otimes \bD(G))$
\item Chain rule:
$\bD(F \circ G)=(\bD(F) \circ G) \otimes \bD(G)$
\end{itemize}

\article[Higher derivatives]
There are also ``higher Schur derivatives.'' For a partition $\lambda$ and a polynomial functor $F$, we define $\bD_{\lambda}(F)$ to be the coefficient of $\bS_{\lambda}$ in $a^*(F)$.  Thus $\bD=\bD_{(1)}$.  The co-addition formula in \pref{ss:coadd} shows that
\begin{align}
\bD_\nu \bS_\lambda = \bigoplus_\mu \bS_\mu^{\oplus c^\lambda_{\mu, \nu}}.
\end{align}
So we can alternatively define $\bD_\lambda$ as the adjoint of the functor $F \mapsto F \otimes \bS_\lambda$. More precisely,
\begin{align}
\Hom_{\cV}(\bD_\lambda F, G) = \Hom_{\cV}(F, G \otimes \bS_\lambda).
\end{align}
This implies immediately that the Schur derivatives commute: 
\begin{align}
\bD_\lambda  \circ \bD_\mu = \bD_\mu \circ \bD_\lambda.
\end{align}
In the theory of symmetric functions, $\bD_\lambda$ corresponds to the ``skewing operator.'' The higher Schur derivatives $\bD_{\lambda}$ behave like higher order derivatives, from the point of view of formal properties. In general, we have
\begin{align}
\bD_\nu(F \otimes G) &= \bigoplus_{\lambda, \mu} (\bD_\lambda(F) \otimes \bD_\mu(G))^{\oplus c^\nu_{\lambda, \mu}}.
\end{align}
This follows from the formula for co-addition from \pref{ss:coadd} (see also \cite[Example I.5.25(d)]{macdonald}).
For example, 
\begin{displaymath}
\bD_{(2)}(F \otimes G)=(\bD_{(2)}(F) \otimes G) \oplus (\bD(F) \otimes \bD(G)) \oplus (F \otimes \bD_{(2)}(G)).
\end{displaymath}
In addition, recall that by Schur--Weyl duality, we have $\bS_1^{\otimes k} = \bigoplus_\lambda \bM_\lambda \boxtimes \bS_\lambda$ as representations of $S_k \times \GL(\infty)$. So the higher Schur derivatives give us a decomposition of the iterated derivative
\begin{align}
\bD^{\circ k} = \sum_\lambda \dim(\bM_\lambda) \bD_\lambda.
\end{align}
In the sequence model, $(\bD_{\lambda} V)_n$ is given by $\Hom_{S_k}(\bM_{\lambda}, V_{n+k})$, where $k=\vert \lambda \vert$.

\subsection{Further remarks}

\article[Universal property] \label{ss:univ}
Let $\mc{A}$ be a $\bC$-linear symmetric tensor category (definition reviewed in \S\ref{ss:tensorcat}).  Let $A$ be an object of $\mc{A}$.  Since the tensor structure is symmetric, $S_n$ acts on $A^{\otimes n}$.  We define
\begin{align}
\bS_{\lambda}(A)=(A^{\otimes n} \otimes \bM_{\lambda})_{S_n}.
\end{align}
One can verify that this defines a functor
\begin{align}
\cV \times \mc{A} \to \cA, \qquad (\bS_{\lambda}, A) \mapsto \bS_{\lambda}(A).
\end{align}
In the case where $\mc{A}$ is $\cV$, the above map is the composition map discussed in \S \ref{ss:comp}.

Using this action, $\cV$ can be characterized as the universal $\bC$-linear tensor category.  That is, to give a symmetric tensor functor from $\cV$ to an arbitrary symmetric tensor category $\mc{A}$ is the same as to give an object of $\mc{A}$.  In other words, the functor
\begin{align}
\Fun^{\otimes}(\cV, \mc{A}) \to \mc{A}, \qquad F \mapsto F(\bC\langle 1 \rangle)
\end{align}
is an equivalence.  The discussion of the previous paragraph shows that this functor is essentially surjective:  given an object $A$ of $\mc{A}$ we obtain a functor $F\colon \cV \to \mc{A}$ by $F(\bS_{\lambda})=\bS_{\lambda}(A)$.  This is clearly a tensor functor and satisfies $F(\bC\langle 1 \rangle)=A$.  We leave the remainder of the proof that the above functor is an equivalence to the interested reader.

\article[The category $\Sym(\Vec)$]
A word of warning:  this section is not meant to be entirely rigorous.  We have not developed the necessary foundations to rigorously prove the assertions we make here.  We will not use any of these statements in what follows; however, we believe they offer a useful picture to keep in mind.

We now give another description of $\cV$:  namely, it can be identified with the symmetric algebra on $\Vec$.  Let us make sense of this statement in two ways.  First, $\Sym(\Vec)$ should mean the universal $\bC$-linear symmetric tensor category equipped with an additive functor from $\Vec$.  If $\mc{A}$ is an arbitrary $\bC$-linear symmetric tensor category, then giving a symmetric tensor functor $\Sym(\Vec) \to \mc{A}$ is the same as giving an additive functor $\Vec \to \mc{A}$, which is the same as giving an object of $\mc{A}$.  Thus $\Sym(\Vec)$ satisfies the same universal property as $\cV$, and so the two are equivalent.

We now give a different explanation of the equivalence.  In \pref{ss:univ}, we saw that $\cV$ acts on any $\bC$-linear symmetric tensor category.  We believe that one should regard this action as analogous to a divided power structure.  Thus every $\bC$-linear symmetric tensor category comes with a canonical divided power structure.  In particular, the symmetric algebra and divided power algebra on a $\bC$-linear abelian category are the same.  We can therefore think of $\Sym(\Vec)$ as $\bigoplus_{n \ge 0} (\Vec^{\otimes n})^{S_n}$ (this is how the divided power algebra is constructed).  Now, $\Vec^{\otimes n}=\Vec$, and $\Vec^{S_n}=\Rep(S_n)$ (the left side means $S_n$-equivariant objects in $\Vec$; since the action is trivial this reduces to representations of $S_n$).  We thus see that $\Sym(\Vec)$ is the direct sum of the categories $\Rep(S_n)$, i.e., $\Rep(S_{\ast})$.

\article[{Analogy with $\bC[t]$}]
The above structure on $\cV$ puts it in close analogy with $\bC[t]$.  Indeed, $\bC[t]$ is a ring (analogous to a tensor category) which has natural co-addition and co-multiplication maps (coming from the ring structure on the additive group $\Spec(\bC[t])=\bG_a$).  Furthermore, the ring $\bC[t]$ has a notion of composition and derivative.  From this point of view, the homogeneous pieces of an object of $\cV$ correspond to the coefficients of a polynomial in $\bC[t]$.  This can be a useful way to think about the operations (composition, tensor product, etc.) in $\cV$.

The constructions of $\bC[t]$ and $\cV$ can be made to look similar as well:  $\bC[t]$ is obtained by taking the ``unit vector space'' $\bC$ (i.e., the identity for the tensor product) and applying the ``exponential'' $\Sym$.  Similarly, $\cV$ is obtained by taking the ``unit abelian category'' $\Vec$ and applying $\Sym$.  In fact, from this point of view, there is one more object that fits into the analogy:  the number $e$ is obtained by taking the unit complex number 1 and applying the exponential function.  We therefore have a sequence of objects
\begin{displaymath}
e, \bC[t], \cV,
\end{displaymath}
each obtained by the same construction but at a different categorical level.  Does it continue in a meaningful way?

\section{Symmetric structures on tensor categories}
\label{s:transp}

To this point, though we have discussed tensor products on various abelian categories, we have avoided the technical formalism of general abelian tensor categories.  We now discuss this some, as the tensor properties of the transpose functor are somewhat subtle:  it is a tensor functor but is not a symmetric tensor functor.  In fact, the tensor product on $\cV$ admits two natural symmetric structures, and the transpose functor interchanges them.

\subsection{Symmetric tensor categories}
\label{ss:tensorcat}

\article
A {\bf tensor category} is an abelian category $\mc{A}$ equipped with a bi-additive functor
\begin{align}
\otimes \colon  \mc{A} \times \mc{A} \to \mc{A}.
\end{align}
This functor is required to be associative and have a unit object.  However, those requirements need not hold in the strict sense, but only up to isomorphism.  For instance, the associativity ``condition'' means that for every triple of objects $(A, B, C)$ in $\mc{A}$ we are given an isomorphism
\begin{align}
\alpha_{A,B,C}\colon (A \otimes B) \otimes C \to A \otimes (B \otimes C).
\end{align}
These isomorphisms must define a natural transformation in a suitable sense, and must satisfy an additional ``coherence'' condition called the pentagon axiom \cite[\S VII.1]{maclane}.  Similarly, the existence of a unit means there is an object 1 of $\mc{A}$ and isomorphisms $\beta_A \colon A \otimes 1 \to A$ and $\gamma_A \colon 1 \otimes A \to A$.  There are additional compatibilities between the $\alpha$, $\beta$ and $\gamma$.  All this data is not merely required to exist, but is part of the data of a tensor category.  That is, a tensor category is a tuple $(\mc{A}, \otimes, 1, \alpha, \beta, \gamma)$.

\article
Suppose $\mc{A}$ and $\mc{B}$ are tensor categories.  A tensor functor between them is an additive functor $F \colon \mc{A} \to \mc{B}$ which commutes with the tensor product.  Again, this is not a condition but extra data:  we require a functorial isomorphism 
\begin{align} \label{eqn:tensorfunctorisom}
F(A \otimes B) \xrightarrow{\cong} F(A) \otimes F(B)
\end{align}
which is compatible with all the data on each side in certain ways.  This isomorphism is part of the data of the tensor functor.

\article
The definition of tensor category has no requirement that the product $\otimes$ be commutative in any sense:  it is perfectly possible that there are objects $A$ and $B$ for which $A \otimes B$ and $B \otimes A$ are non-isomorphic.  Since many tensor categories have a commutative tensor product, we would like a definition that captures this.  Clearly, the first thing to ask for is the existence of a functorial isomorphism $\tau_{A,B}\colon A \otimes B \to B \otimes A$, compatible with the additional structure in certain ways.  This gives the notion of a {\bf braided tensor category}.  However, this definition does not enforce one of the basic rules we are accustomed to when dealing with tensors:  namely, that the switching-of-factors map $\tau$ be an involution.  There are interesting braided tensor categories where this is not the case, however, we will not discuss such things and so impose this condition.

\article
(Somewhat) formally, a {\bf symmetric tensor category} is a pair $(\mc{A}, \tau)$ where $\mc{A}$ is a tensor category and $\tau$ associates to every pair of objects $(A, B)$ an isomorphism 
\begin{align}
\tau_{A,B}\colon A \otimes B \xrightarrow{\cong} B \otimes A
\end{align}
in such a way that $\tau_{B,A} \circ \tau_{A,B}$ is the identity on $A \otimes B$, and $\tau$ interacts with the other structures on $\mc{A}$ (i.e., the $\alpha$, $\beta$, etc.) in the appropriate manner.  When the base tensor category $\cA$ is fixed, we refer to $\tau$ as a {\bf symmetric structure} on it.

\article
Let $(\mc{A}, \tau)$ and $(\mc{B}, \sigma)$ be symmetric tensor categories.  A {\bf symmetric tensor functor} is a tensor functor $F\colon \mc{A} \to \mc{B}$ such that the diagram
\begin{align}
\xymatrix{
F(A \otimes B) \ar[r] \ar[d]_{F(\tau_{A,B})} & F(A) \otimes F(B) \ar[d]^{\sigma_{F(A),F(B)}} \\
F(B \otimes A) \ar[r] & F(B) \otimes F(A) }
\end{align}
commutes for all $A$ and $B$.  Here the horizontal maps are the isomorphisms \eqref{eqn:tensorfunctorisom}.  Note that a symmetric tensor functor is just a tensor functor satisfying a condition:  it does not have any additional data associated to it.

\article
The above definitions are very complicated.  Fortunately, none of the inner workings of tensor categories will be relevant to us:  it is only the commutativity isomorphism $\tau$ that we will ever need to think about.

\article
Here is one concrete way in which the symmetric condition on $\tau$ comes in to play:  in a symmetric tensor category, the symmetric group $S_n$ acts on $A^{\otimes n}$ for any object $A$.  In a braided tensor category, this is not the case: only the braid group acts.  This is why we needed $\mc{A}$ to be a symmetric tensor category in \pref{ss:univ} to get an action of $\cV$.  If $F\colon \mc{A} \to \mc{B}$ is a symmetric tensor functor then the isomorphism $F(A^{\otimes n}) \to F(A)^{\otimes n}$ is $S_n$-equivariant.  It follows that $F$ is compatible with the actions of $\cV$ on each side; in particular, we have a natural isomorphism 
\begin{align}
F(\bS_{\lambda}(A)) \xrightarrow{\cong} \bS_{\lambda}(F(A)).
\end{align}

\subsection{Algebras and modules}
\label{ss:alg}

\article
While on the subject of tensor categories, we make a slight digression that will be relevant later on.  Let $\mc{A}$ be a tensor category.  An {\bf algebra} in $\mc{A}$ is an object $A$ equipped with a multiplication map $A \otimes A \to A$.  One can make sense of what it means for $A$ to be associative and unital.  These are conditions on  $A$ and not extra data.  All algebras we are interested in will satisfy both conditions.  However, one cannot make sense of what it means for $A$ to be commutative, in general.

\article
Suppose now that $\mc{A}$ is a \emph{symmetric} tensor category.  Then one \emph{can} make sense of the notion of commutativity for an algebra in $A$.  Namely, the multiplication map $A \otimes A \to A$ should be invariant under the action of $S_2$ on the source; remember, this action only exists because of the symmetric hypothesis.

\article
Given an associative unital algebra $A$ in a tensor category (not necessarily symmetric), a left $A$-module is an object $M$ of $\mc{A}$ equipped with a map $A \otimes M \to M$ satisfying the usual axioms.  There is an obvious notion of a map of left $A$-modules.  We write $\Mod_A$ for the category of left $A$-modules; it is an abelian category.  When $\mc{A}$ is symmetric and $A$ is commutative, left and right modules coincide (so we will drop the word ``left'') and the tensor product on $\mc{A}$ induces one on $\Mod_A$.

\subsection{An example}

\article
Let $\mc{A}$ be the category of $\bZ_{\ge 0}$-graded vector spaces.  We regard vector spaces as being objects in $\mc{A}$ of degree 0, and write $[n]$ for the grade shift functor, so that $\bC[n]$ is supported in degree $n$.  For two graded vector spaces $V$ and $W$ we define $V \otimes W$ to be the usual tensor product of $V$ and $W$ graded in the usual manner:
\begin{align}
(V \otimes W)_n=\bigoplus_{i+j=n} V_i \otimes W_j.
\end{align}
Thus $\bC[n] \otimes \bC[m]=\bC[n+m]$, as usual.  There are natural and obvious choices for all the extra structure required to make $\mc{A}$ a tensor category under $\otimes$.  From here on we regard $\mc{A}$ as a tensor category.

\article
The tensor category $\mc{A}$ admits an obvious symmetric structure $\tau$, defined by
\begin{align}
\tau_{V,W}\colon V \otimes W \to W \otimes V, \qquad \tau_{V,W}(v \otimes w)=w \otimes v.
\end{align}
This clearly satisfies the axiom to be symmetric, i.e., $\tau_{W,V} \tau_{V,W}$ is the identity map on $V \otimes W$.  In the formalism discussed in \S \ref{ss:alg}, commutative algebras in $(\mc{A}, \tau)$ are commutative graded rings in the usual sense.  For instance, $\Sym^k(\bC^n[1])=\Sym^k(\bC^n)[k]$, so $\Sym(\bC^n[1])$ is the standard commutative polynomial ring $\bC[x_1,\ldots, x_n]$ where each $x_i$ has degree 1.

\article
The tensor category $\mc{A}$ admits a second symmetric structure $\sigma$, defined by
\begin{align}
\sigma_{V,W}\colon V \otimes W \to W \otimes V, \qquad \sigma_{V,W}(v \otimes w)=(-1)^{\deg(v)\deg(w)} w \otimes v.
\end{align}
(Obviously this formula is valid only for homogeneous elements $v$ and $w$, and the map $\sigma_{V,W}$ is extended linearly.)  Again, it is clear that $\sigma$ is a symmetric.  With this symmetric structure, algebras in $(\mc{A}, \sigma)$ correspond to \emph{graded-commutative} rings.  For instance, $\Sym^k(\bC^n[1])=\bw{k}(\bC^n)[k]$, so $\Sym(\bC^n[1])$ is the exterior algebra on $n$ generators of degree 1.  The symmetric structure $\sigma$ is typically used (often tacitly) when equipping the category of chain complexes with a symmetric tensor product.

\begin{Proposition}
The two symmetric tensor categories $(\mc{A}, \tau)$ and $(\mc{A}, \sigma)$ are not equivalent.
\end{Proposition}

\begin{proof}
Suppose we had an equivalence $F\colon (\mc{A},\tau) \to (\mc{A},\sigma)$.  Since $F$ is an equivalence of the underlying tensor categories, we have $F(\bC)=\bC$ and
\begin{displaymath}
F(\bC[n])=F((\bC[1])^{\otimes n})=F(\bC[1])^{\otimes n}
\end{displaymath}
From the above it is clear that we must have $F(\bC[1])=\bC[1]$, otherwise $F$ could not be essentially surjective.  The above identity then shows that $F(\bC[n])=\bC[n]$ and so $F(V)=V$ for every vector space $V$.  Since $F$ is a symmetric tensor functor it is compatible with Schur functors.  However, we know that Schur functors act differently in the two structures:  for instance, $\Sym^2(\bC[1])$ is non-zero in $(\mc{A}, \tau)$ but vanishes in $(\mc{A}, \sigma)$.  This is a contradiction, so no such equivalence $F$ can exist.
\end{proof}

\subsection{Symmetric structures on $\cV$ and the transpose}

\article
We have defined the category $\cV$ to be any of various equivalent tensor categories.  In fact, each of those tensor categories admits an obvious symmetric structure $\tau$; for instance, in the $\GL$-model $\tau$ is just the usual isomorphism $V \otimes W \to W \otimes V$ of representations.  The various equivalences of tensor categories respect these structures, and so $\tau$ defines a symmetric structure on $\cV$, making it into a symmetric tensor category.

\article
As in the example discussed in the previous section, we can define an alternate symmetric structure $\sigma$ on $\cV$ introducing some signs.  Precisely, for $V$ and $W$ in $\Rep^{\pol}(\GL)$, put
\begin{align}
\sigma_{V,W}\colon V \otimes W \to W \otimes V, \qquad \sigma_{V,W}(v \otimes w)=(-1)^{\deg(v)\deg(w)} w \otimes v.
\end{align}
Here degree is defined using the canonical grading on objects of $\cV$; see \pref{ss:grade}.  One can easily see $\sigma_{V,W}$ in the other models as well.

\begin{Proposition}
\label{prop:transp}
Transpose is an equivalence of symmetric tensor categories $(\cV, \tau) \to (\cV, \sigma)$.
\end{Proposition}

This is our main result on the transpose functor.  In particular, it shows that $(\cV, \tau)$ and $(\cV, \sigma)$ are equivalent as symmetric tensor categories, in contrast to what we saw in the previous section.  The proposition can be proved directly by manipulations in the sequence model.  However, we prefer to give a clearer proof in the fs-model, and require some preliminary discussion.

\article
Let $A$ be an algebra in $\cV$.  We say that $A$ is commutative if it is so with respect to $\tau$, and graded-commutative if it is commutative with respect to $\sigma$.  The above result shows that the theories of commutative and graded-commutative algebras in $\mc{A}$ are completely equivalent.  This is somewhat amazing, as it is not true in the case of graded vector spaces.

\article
Let $L$ be a finite set.  Define the {\bf orientation space} of $L$, denoted $\Or_L$, to be the determinant (top exterior power) of $\bC[L]$, the vector space with basis $L$.  Thus $\Or_L$ is a one-dimensional vector space on which $\Aut(L)$ acts through the sign character; the space $\Or_L$ does not have a canonical basis.  It is clear that $\Or_L$ is functorial in $L$, and so defines an object $\Or$ of $\Vec^{\fs}$.  There is a natural isomorphism
\begin{equation}
\label{eq1}
i_{L,L'} \colon  \Or_L \otimes \Or_{L'} \xrightarrow{\cong} \Or_{L \amalg L'}
\end{equation}
given by concatenating wedge products, putting $L$ before $L'$.  The diagram
\begin{align}
\xymatrix{
\Or_L \otimes \Or_{L'} \ar[r]^-{i_{L,L'}} \ar[d] & \Or_{L \amalg L'} \ar[d] \\
\Or_{L'} \otimes \Or_L \ar[r]^-{i_{L',L}} & \Or_{L' \amalg L} }
\end{align}
is commutative up to sign:  the two paths differ by $(-1)^{(\#L)(\#L')}$.  This can be re-expressed as follows.  The map \eqref{eq1} yields a multiplication map $\Or \otimes \Or \to \Or$ and the commutativity-up-to-sign of the above diagram is the same as saying that this multiplication is graded-commutative.

\article
We now have the following formula for transpose in the fs-model:
\begin{align}
V^{\dag}=V \boxtimes \Or.
\end{align}
With this description, it is easier to verify the tensor properties of transpose.  

\article
We now prove Proposition~\pref{prop:transp}.  We have
\begin{displaymath}
\begin{split}
(V^{\dag} \otimes W^{\dag})_L
&=\bigoplus_{L=A \amalg B} V_A \otimes \Or_A \otimes W_B \otimes \Or_B \\
&=\bigoplus_{L=A \amalg B} V_A \otimes W_B \otimes \Or_L \\
&=(V \otimes W)_L \boxtimes \Or_L=(V \otimes W)^{\dag}_L
\end{split}
\end{displaymath}
To go from the first to the second line, we used the isomorphism $i_{A,B}$.  Call the composite isomorphism $f_{V,W}^L$.  These isomorphisms are functorial in $L$ and so define an isomorphism
\begin{displaymath}
f_{V,W}\colon V^{\dag} \otimes W^{\dag} \to (V \otimes W)^{\dag}
\end{displaymath}
of objects in $\Vec^{\fs}$.  This gives transpose the structure of a tensor functor.  From the properties of the isomorphisms $i$ discussed above, it is clear that the diagram
\begin{displaymath}
\xymatrix{
(V \otimes W)^{\dag} \ar[r]^{f_{V,W}} \ar[d]_{(\tau_{V,W})^{\dag}} &
V^{\dag} \otimes W^{\dag} \ar[d]^{\sigma_{V^{\dag}, W^{\dag}}} \\
(W \otimes V)^{\dag} \ar[r]^{f_{W,V}} & W^{\dag} \otimes V^{\dag} }
\end{displaymath}
commutes.  Thus transpose is a symmetric tensor functor, as stated.

\article
\label{transp:comp}
Let $V$ be an object of $\cV$ of degree $n$.  Since transpose is a tensor functor, there is a natural isomorphism between $(V^{\dag})^{\otimes n}$ and $(V^{\otimes n})^{\dag}$.  The fact that transpose turns $\tau$ into $\sigma$ means that this isomorphism is $S_n$-equivariant if we twist the usual $S_n$-action on $(V^{\otimes n})^{\dag}$ by $\sgn^n$.  It follows that $\bS_{\lambda}(V^{\dag})$ is isomorphic to $\bS_{\lambda}(V)^{\dag}$ if $n$ is even and $\bS_{\lambda^{\dag}}(V)^{\dag}$ is $n$ is odd.  More generally, suppose $V$ and $W$ are objects of $\cV$.  If $V$ is concentrated in even degrees then $(W \circ V)^{\dag}=W \circ (V^{\dag})$, while if $V$ is concentrated in odd degrees then $(W \circ V)^{\dag}=(W^{\dag}) \circ (V^{\dag})$.

\part{Twisted commutative algebras}

\xsection{Basic definitions and results}
\label{sec:tca-gen}

\subsection{The definition} \label{ss:tca-defn}

\article
A {\bf twisted commutative algebra} (tca) is an associative unital commutative algebra in the symmetric tensor category $\cV$.  (We use the standard symmetric structure $\tau$ on $\cV$ in all that follows.)  This definition has appeared before, see for example, \cite{baratt} and \cite{gs}. We now unpack the definition in the various models.

\article[Sequence model]
A tca in this model is an associative unital graded $\bC$-algebra $A$ supported in non-negative degrees equipped with an action of $S_n$ on the degree $n$ piece $A_n$, such that the multiplication map 
\begin{align}
A_n \otimes A_m \to A_{n+m}
\end{align}
is $S_n \times S_m$ equivariant and the following twisted version of the commutativity axiom holds:  for $x \in A_n$ and $y \in A_m$ we have $yx=\tau(xy)$, where $\tau \in S_{n+m}$ switches the first $n$ and last $m$ elements of $\{1,\ldots,n+m\}$.  An $A$-module $M$ is a graded $A$-module (in the usual sense) equipped with an action of $S_n$ on $M_n$ for which the multiplication map 
\begin{align}
A_n \otimes M_m \to M_{n+m}
\end{align}
is $S_n \times S_m$ equivariant.

From the point of view of the sequence model, tca's are non-commutative rings equipped with some extra structure that partially compensates for the non-commutativity.

\article[GL-model]
A tca in this model is a commutative associative unital graded $\bC$-algebra $A$ on which $\GL(\infty)$ acts (by algebra automorphisms), such that $A$ forms a polynomial representation.  An $A$-module $M$ is just an $A$-module in the usual sense equipped with a compatible action of $\GL(\infty)$ (meaning the multiplication map $A \otimes M \to M$ is $\GL(\infty)$-equivariant), such that $M$ forms a polynomial representation.

From the point of view of the $\GL$-model, tca's are just large commutative rings with a large group action.

\article[Schur model]
A tca in this model is a polynomial functor $A$ from $\Vec$ to the category of associative unital commutative $\bC$-algebras.  Here ``polynomial'' means that the resulting functor $\Vec \to \Vec$ given by forgetting the algebra structure is polynomial.  An $A$-module is a polynomial functor $M\colon \Vec \to \Vec$ such that $M(V)$ is equipped with the structure of an $A(V)$-module for each vector space $V$, and these structures are functorial, in the sense that if $f\colon V \to V'$ is a map of vector spaces then $M(V) \to M(V')$ is a map of $A(V)$-modules, where $M(V')$ is regarded as an $A(V)$-module through the homomorphism $A(V) \to A(V')$.

Thus if $A$ is a tca in the Schur model then $A(\bC^n)$ is a commutative associative unital ring with a $\GL(n)$-action, for each $n$.  In cases of interest, the rings $A(\bC^n)$ are finitely generated $\bC$-algebras, so this point of view connects tca's to familiar objects from commutative algebra.

\article[fs-model]
A tca in this model is a functor $A\colon \fs \to \Vec$ equipped with a multiplication map
\begin{align}
A_L \otimes A_{L'} \to A_{L \amalg L'}
\end{align}
for every pair of finite sets $(L, L')$ which is functorial, associative, unital and commutative.  The meaning of functorial and associative is clear.

The unit condition means that there is an element $1 \in A_{\emptyset}$ such that for any $x \in A_L$ the image of $x \otimes 1$ under the multiplication map is $x$.  Actually, this is not perfectly correct:  the image of $x \otimes 1$ is an element of $A_{L \amalg \emptyset}$, while $x$ belongs to $A_L$; these are different spaces, so we cannot compare the two elements.  However, there is a canonical bijection of sets $L \amalg \emptyset \to L$, and thus a canonical isomorphism $A_{L \amalg \emptyset} \to A_L$.  We really mean that the image of $x \otimes 1$ under the multiplication map corresponds to $x$ under this isomorphism.

The commutativity condition is that the diagram
\begin{align}
\xymatrix{
A_L \otimes A_{L'} \ar[r] \ar[d] & A_{L \amalg L'} \ar[d] \\
A_{L'} \otimes A_L \ar[r] & A_{L' \amalg L} }
\end{align}
commute.  Here the horizontal maps are the multiplication maps, the left vertical map is the usual switching-of-factors map and the right vertical map is the map induced by $A$ from the canonical bijection of sets $L \amalg L' \to L' \amalg L$.

An $A$-module $M$ is a functor $\fs \to \Vec$ equipped with a multiplication map
\begin{align}
A_L \otimes M_{L'} \to M_{L \amalg L'}
\end{align}
which satisfies the usual conditions.

\article
An ideal of a tca $A$ is just an $A$-submodule of $A$.  The map
\begin{displaymath}
\{ \textrm{ideals of A} \} \to \{ \textrm{$\GL(\infty)$-stable ideals of $A(\bC^{\infty})$} \}, \qquad
\mf{a} \mapsto \mf{a}(\bC^{\infty})
\end{displaymath}
is an order-preserving bijection, and this is often how we view ideals of $A$.

\article
Let $A$ be a tca taken and let $M$ be an $A$-module, taken in the GL-model.  For an element $x \in M$, we write $(x)$ for the submodule that $x$ generates in the usual sense, and $\langle x \rangle$ for the $\GL$-submodule that $x$ generates.  Thus $\langle x \rangle=(gx)_{g \in \GL(\infty)}$.  Note that for $x \in A$ and $y \in M$ then identity $\langle x \rangle \langle y \rangle=\langle xy \rangle$ is not valid in general; the former is equal to $( (gx)(hy) )_{g,h \in \GL(\infty)}$, while the latter is equal to $( g(xy))_{g \in \GL(\infty)}$.  However, it is true when $x$ and $y$ are disjoint (see Lemma~\pref{ideal:mult} below).

\begin{Lemma}
\label{ideal:wt}
Let $M$ be an $A$-module and let $x$ be an element of $M$.  Let $x=\sum_{i=1}^n x_i$ be the decomposition of $x$ into weight vectors.  Then $\langle x \rangle=\sum_{i=1}^n \langle x_i \rangle$.
\end{Lemma}

\begin{proof}
It is clear that $x$, and thus $\langle x \rangle$, is contained in $\sum_{i=1}^n \langle x_i \rangle$.  On the other hand, since each $x_i$ belongs to the $\GL$-submodule of $M$ generated by $x$, we have $x_i \in \langle x \rangle$ for each $i$, which gives the other containment.
\end{proof}

\begin{Lemma}
\label{ideal:mult}
Let $M$ be an $A$-module and let $x \in A$ and $y \in M$ be disjoint.  Then $\langle x \rangle \langle y \rangle=\langle xy \rangle$.
\end{Lemma}

\begin{proof}
It is enough to show that $(gx)(hy)$ belongs to the $\GL$-submodule of $M$ generated by $xy$.  By Proposition~\pref{prop:disjoint}, $gx \otimes hy$ belongs to the $\GL$-submodules of $A \otimes M$ generated by $x \otimes y$, and so the result follows.
\end{proof}

\subsection{Examples}
\label{ss:tca-ex}

\article
We now give some examples of tca's.  The easiest way to produce an example is to form the symmetric algebra on an object of $\cV$.  We call these ``polynomial tca's,'' and discuss certain cases in detail.  We then give some examples of non-polynomial tca's.

\article
The simplest nontrivial polynomial tca is $A=\Sym(\bC\langle 1 \rangle)$.  This is the univariate polynomial ring with one degree 1 generator.  In the sequence model, this is the graded ring $A=\bC[t]$, with all symmetric group actions trivial.  In the $\GL$-model, this is the ring $A=\Sym(\bC^{\infty})$, with the usual action of $\GL(\infty)$.  One reason this tca is so ``easy'' is that its graded pieces are simple objects of $\cV$; it is unique among polynomial tca's in this respect.  This tca also comes up in many applications (see \S\ref{ss:efw} and \S\ref{ss:fimod}).

The ideals of this tca are easy to classify:  each is generated by some $A_n$, and is thus a power of the maximal ideal.  In other words, the ideals of $A$ as a tca are the same as the ideals of $\bC[t]$ as a graded ring.  Note however, that $A$-modules are not the same thing as graded modules over $\bC[t]$.  There is a functor from $A$-module to graded $\bC[t]$-modules, but it typically loses information; for example, the $A$-modules $A \otimes \Sym^2(\bC^{\infty})$ and $A \otimes \bigwedge^2(\bC^{\infty})$ are non-isomorphic, but both give rise to a free module of rank 1 over $\bC[t]$.

The minimal projective resolution of the residue field of $A$ is the Koszul complex:
\begin{align}
\cdots \to A \otimes \bigwedge^i(\bC\langle 1 \rangle) \to \cdots \to A \otimes \bigwedge^2(\bC\langle 1 \rangle) \to A \otimes \bigwedge^1(\bC\langle 1 \rangle) \to A \to \bC \to 0.
\end{align}
Note that this resolution is infinite, and so the global dimension of $A$ is infinite.  A detailed analysis of the homological structure of modules over the algebra $A$ can be found in \cite{symc1}. We include a summary of some of these results in \S\ref{ss:symc1}.

\article \label{ss:multipol}
The next simplest tca's are the multivariate polynomial rings generated in degree 1.  Let $U$ be a finite dimensional vector space.  Recall that $U\langle 1 \rangle$ is the object $U \otimes \bC \langle 1 \rangle$ of $\cV$; up to isomorphism, this is just a direct sum of $\dim{U}$ copies of $\bC\langle 1 \rangle$.  Let $A$ be the tca $\Sym(U\langle 1 \rangle)$.  We begin by examining $A$ in the different models.
\begin{enumerate}[(a)]
\item {\it Sequence model.}  In this model, $A$ is the sequence $(A_n)$ where $A_n=U^{\otimes n}$.  Multiplication is the map $U^{\otimes n} \otimes U^{\otimes m} \to U^{\otimes (n+m)}$ given by concatenating tensors.  In other words, $A$ is just the tensor algebra on $U$.  This is highly non-commutative.  However, to regard $A$ as a tca, we remember the action of $S_n$ on each $U^{\otimes n}$, and we have twisted commutativity: for $x \in A_n$ and $y \in A_m$ we have $xy=\tau(yx)$, for the element $\tau$ of $S_{n+m}$ which switches the first $n$ and last $m$ elements of $\{1,\ldots,n+m\}$.
\item {\it fs-model.}  In this model, $A$ is the functor $\fs \to \Vec$ given by $A_L=U^{\otimes L}$.  The space $U^{\otimes L}$ is isomorphic to a tensor product of $\#L$ copies of $U$, but can described canonically as the universal space equipped with a multi-linear map from $U \times L$; in other words, we can think of pure tensors in $U^{\otimes L}$ as being indexed by elements of $L$, and this gives functoriality in $L$.  Multiplication is again given by concatenation.
\item {\it GL-model.}  In this model, $A$ is simply the ring $\Sym(U \otimes \bC^{\infty})$.  If we pick a basis $x_1, \ldots, x_n$ of $U$, then $A$ can be regarded as the polynomial ring $\bC[x_{ij}]$ with $1 \le i \le n$ and $j \ge 1$.  The group $\GL(\infty)$ acts by linear substitutions on the variables with respect to the second subscript.
\item {\it Schur model.}  In this model, $A$ is the functor which attaches to a vector space $V$ the ring $A(V)=\Sym(U \otimes V)$.  This is a polynomial ring in $\dim(U)\dim(V)$ variables.
\end{enumerate}
Using the Cauchy formula \pref{schur:cauchy}, we can decompose $A$ as an object of $\cV$:
\begin{equation}
\label{eq:multipol-1}
A = \bigoplus_{\ell(\lambda) \le \dim(U)} \bS_{\lambda}(U) \otimes \bS_{\lambda}.
\end{equation}
We thus see that $\bS_{\lambda}$ is a constituent of $A$ if and only if $\ell(\lambda) \le \dim(U)$, in which case its multiplicity is the dimension of $\bS_{\lambda}(U)$.

\article \label{ss:poly}
Moving beyond tca's generated in degree 1, we next consider the polynomial tca $A=\Sym(\bC\langle n \rangle)$, with $n>0$.  In the GL-model, this is the ring $\Sym((\bC^{\infty})^{\otimes n})$.  In the fs-model, $A$ admits a nice combinatorial description, which is most easily seen using \pref{art:gl2seq}.  Namely, $A_L$ is naturally the vector space having for a basis the set of perfect directed $n$-uniform hypermatchings on $L$.  (Recall that a ``directed $n$-uniform hyperedge'' on a vertex set $L$ is an ordered collection of $n$ distinct elements of $L$.  The ``perfect'' part means every element of $L$ belongs to a unique hyperedge.)  In fact, this description can be extended to general polynomial rings:  if $A=\Sym(\bC\langle n_1 \rangle \oplus \cdots \oplus \bC\langle n_r \rangle)$ then $A_L$ can be described as the vector space having for a basis the set of directed perfect hypermatchings on $L$ whose edges have been colored one of $r$ colors, and such that the edges of color $i$ are $n_i$ uniform.

\article
As we saw in \pref{ss:multipol}, the multivariate polynomial tca's generated in degree 1 can be decomposed exactly in $\cV$.  The corresponding decomposition for polynomial tca's generated in degree $>2$ is unknown, since the plethysms $\Sym^n \circ \bS_{\lambda}$ are unknown for $\vert \lambda \vert>2$.  However, in degree 2 we have do have some results, namely:
\begin{displaymath}
\Sym(\Sym^2) = \bigoplus_{\lambda} \bS_{2\lambda}, \qquad \Sym(\lw^2)=\bigoplus_{\lambda \in Q_{-1}} \bS_{\lambda}.
\end{displaymath}
The first sum is taken over all partitions, and $2\lambda$ denotes the partition $(2\lambda_1, 2\lambda_2, \ldots)$.  In the second sum, $Q_{-1}$ is the set of partitions $\lambda$ whose Frobenius coordinates $(a_1, \ldots, a_r \mid b_1, \ldots, b_r)$ satisfy $a_i=b_i-1$.  In particular, these decompositions are multiplicity-free.

\article[Determinantal varieties] \label{ss:detlvar}
The polynomial tca's $\Sym(U \langle 1 \rangle)$ contain many interesting non-polynomial quotients. Note that there is a natural $\GL(U)$ action on this tca. Perhaps the simplest class of ideals are the $\GL(U)$-equivariant ones, i.e., the determinantal ideals.

Pick $0 \le k \le \dim U$. By the Cauchy identity \pref{schur:cauchy}, we have 
\[
\bigwedge^k U \otimes \bigwedge^k \bC^\infty \subset \Sym^k(U \langle 1 \rangle).
\]
If we interpret $\Sym(U \langle 1 \rangle)$ as the coordinate ring of the space of $\dim U \times \infty$ matrices, then the above space is spanned by the $k \times k$ minors. The ideal generated by this is the determinantal ideal, and they have been intensely studied. These are usually considered in the case that $\bC^\infty$ is replaced by a finite-dimensional vector space. See  \cite{brunsvetter} for a general reference.

\subsection{Finiteness conditions}

\article
Let $A$ be a tca.  We say that $A$ is {\bf finitely generated} if there exists a surjection $\Sym(V) \to A$, where $V$ is a finite length object of $\cV$.  We say that an $A$-module $M$ is {\bf finitely generated} if there exists a surjection $A \otimes V \to M$ with $V$ a finite length object of $\cV$.  Of course, one can make sense of ``finite presentation'' as well.

\article
Let us examine these definitions in the sequence model.  The tca $A$ is finitely generated if there exist finitely many elements $x_i$ in various $A_{n_i}$ such that the smallest subspace of $A$ containing the $x_i$ and stable under multiplication and the action of the symmetric groups is $A$ itself.  Let us briefly explain how to see this.  Suppose $A$ is finitely generated, and let $\Sym(V) \to A$ be a surjection, with $V$ finite length.  We can write $V$ as a quotient of a finite direct sum of objects of the form $\bC\langle n_i \rangle$, and so $A$ is a quotient of $\Sym(\bigoplus_i \bC\langle n_i \rangle)$.  Giving an algebra map from the symmetric power is the same as giving a linear map from $\bigoplus_i \bC \langle n_i \rangle$, and  giving a map $\bC\langle n_i \rangle \to A$ in $\cV$ is the same as giving an element of $A_{n_i}$.  This is where the elements $x_i$ come from.  We leave the rest of the reasoning to the reader.  Finite generation of $A$-modules can be described similarly.

\article
Let us now look at how these definitions work in the GL-model.  The tca $A$ is finitely generated if there exist finitely many elements of $A$ such that the $\GL(\infty)$-subrepresentation they span generates $A$ as an algebra in the usual sense.  Similarly, an $A$-module $M$ is finitely generated if it contains finitely many elements such that the $\GL(\infty)$-subrepresentation they span generates $M$ as an $A$-module in the usual sense.

\article
Let $A$ be a finitely generated tca in the Schur model.  Then $A(V)$ is a finitely generated ring for any finite dimensional vector space $V$.  Indeed, we can write $A$ as a quotient of $\Sym(F)$ for some finite length polynomial functor $F$, and so $A(V)$ is a quotient of $\Sym(F(V))$.  The vector space $F(V)$ is finite dimensional, so the statement follows.  Similarly, if $M$ is a finitely generated $A$-module then $M(V)$ is a finitely generated $A(V)$-module for all finite dimensional $V$.  The converse to these statements is false.  For instance, let $F$ be the direct sum of all wedge powers.  Then $A=\Sym(F)$ is a tca, but not finitely generated.  However, $F(V)$ is finite dimensional for any finite dimensional space $V$, and so $A(V)$ is a finitely generated ring for all finite dimensional $V$.

\begin{Remark}
One can make sense of the notions of ``finitely generated'' and ``finitely presented'' in any abelian category:  for instance, one says that $M$ is finitely presented if the functor $\Hom(M,-)$ commutes with direct limits.  One can show that for $\Mod_A$ these general notions coincide with the ones given above.
\end{Remark}

\article
Let $A$ be a tca.  We say that an $A$-module $M$ is {\bf noetherian} if every ascending chain of $A$-submodules stabilizes.  As usual, this is equivalent to every submodule of $M$ being finitely generated.  Note that subs, quotients and extensions of noetherian modules are noetherian.  We say that $A$ is {\bf noetherian} (as a tca) if every finitely generated $A$-module is noetherian.  We say that $A$ is {\bf weakly noetherian} if $A$ is noetherian as an $A$-module, i.e., if ideals of $A$ satisfy the ascending chain condition (ACC).

\begin{Proposition}
\label{finite:imps}
Let $A$ be a tca, and consider the following conditions:
\begin{enumerate}[\rm (a)]
\item $A$ is noetherian.
\item $A$ is weakly noetherian.
\item $A_0$ is noetherian and $A$ is finitely generated over $A_0$.
\item $A(\bC^n)$ is noetherian for all $n$.
\end{enumerate}
Then {\rm (a)} $\implies$ {\rm (b)} $\implies$ {\rm (c)} $\implies$ {\rm (d)}.
\end{Proposition}

\begin{proof}
It is clear that (a) implies (b).  Assume now that $A$ satisfies (b).  As $A_0$ is a quotient of $A$, it satisfies ACC on ideals and is therefore noetherian in the usual sense.  Since $A$ is weakly noetherian, the ideal $A_+$ of $A$ is finitely generated.  Generators for this ideal are generators for $A$ as an algebra over $A_0$.  This proves (c).  Finally, suppose $A$ satisfies (c).  Then $A(\bC^n)$ is finitely generated over $A_0$, and is therefore noetherian.  This proves (d).
\end{proof}

\begin{remark}
We expect (b) implies (a), though we have not proved this.  A major open question is whether (c) implies (a).  Clearly (d) does not imply (c):  consider the symmetric algebra on the exterior algebra.  For ``bounded'' tca's, we show that (a)--(d) are equivalent (Proposition~\pref{cor:noeth}).
\end{remark}

\begin{Example}
\label{sym2:wnoeth}
The tca $A=\Sym(\lw^2)$ is weakly noetherian: in \cite[\S 3]{pfaffiansarithmetic}, it is shown that the poset of equivariant ideals of $\Sym(\lw^2)$ is isomorphic to the poset of partitions appearing in $\Sym(\lw^2)$ under the relationship of containment. Each partition appears at most once (precisely, we get those of the form $(2\lambda)^\dagger$), so the weakly noetherian statement is clear. For $\Sym(\Sym^2)$, an analogous statement is true, see \cite{abeasis} for the equivalence of the relevant posets.
\end{Example}

\subsection{Nakayama's lemma and applications}

\article
Let $A$ be a tca.  We let $A_+$ denote the sum of the positive degree pieces of $A$.  It is an ideal, and $A/A_+=A_0$ is the degree 0 piece of $A$.  We have the following form of Nakayama's lemma.  The usual proof applies.

\begin{proposition} \label{prop:nakayama}
Let $M$ be an $A$-module, let $V$ be an object of $\cV$ and let $V \to M$ be a map in $\cV$ such that $V$ surjects onto $M/A_+M$.  Then the map $A \otimes V \to M$ is surjective.
\end{proposition}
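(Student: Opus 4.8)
The plan is to run the usual graded Nakayama argument, the only input beyond formal nonsense being that every object of $\cV$ carries a canonical $\bZ_{\ge 0}$-grading (see \pref{ss:grade}) with which the tensor product, and hence every $A$-module structure map, is compatible; in particular multiplication by $A_+$ strictly raises degree.

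First I would let $N \subseteq M$ denote the image of the multiplication map $\mu \colon A \otimes V \to M$, which is an $A$-submodule of $M$. Using the unit $\bC \to A_0$ of $A$ we get an inclusion $V = \bC \otimes V \injects A \otimes V$ whose composite with $\mu$ is the given map $V \to M$; hence the image of $V$ in $M$ lies in $N$. Since $V \to M/A_+M$ is surjective by hypothesis, it follows that $N + A_+ M = M$.

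Next, set $Q = M/N$. The previous equality gives $A_+ Q = Q$. If $Q \neq 0$, choose the least $n$ with $Q_n \neq 0$ (possible as $Q$ is non-negatively graded). The degree-$n$ component of $A_+ \otimes Q$ is assembled from pieces involving $A_i$ and $Q_{n-i}$ with $i \ge 1$, and every such $Q_{n-i}$ vanishes by minimality of $n$; since the action map $A_+ \otimes Q \to Q$ preserves degree, we get $(A_+ Q)_n = 0$, contradicting $Q_n = (A_+ Q)_n$. Hence $Q = 0$, i.e.\ $\mu$ is surjective, as claimed.

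I do not expect a genuine obstacle. The only point requiring a moment's care is that the grading on $\cV$ is an honest decomposition into subobjects and that all the relevant maps ($\mu$, the action $A_+ \otimes Q \to Q$, and the inclusion $V \injects A \otimes V$) are morphisms in $\cV$, hence strictly graded; granting this, the ``lowest nonzero degree'' argument goes through verbatim, with no need for finiteness of $V$, semisimplicity, or any further structure of $\cV$.
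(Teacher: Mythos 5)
Your proof is correct and is exactly the ``usual proof'' that the paper invokes without spelling it out: pass to $Q = M/N$, observe $A_+Q = Q$, and kill the lowest nonzero graded piece using the fact that $\otimes$ is degree-additive (so $(A_+ \otimes Q)_n$ only sees $Q_{n-i}$ for $i \ge 1$). Nothing to add; your care in noting that the action map is a morphism in $\cV$ and hence strictly graded is precisely the point that makes the argument go through.
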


\begin{Proposition}
An $A$-module $M$ is finitely generated if and only if $M/A_+M$ is a finitely generated $A_0$-module.
\end{Proposition}

\begin{proof}
If $M$ is finitely generated over $A$ then clearly $M/A_+M$ is finitely generated over $A_0$.  Conversely, suppose $M/A_+M$ is finitely generated over $A_0$.  Choose a surjection $A_0 \otimes V \to M/A_+M$ with $V$ a finite length object in $\cV$.  Lift $V$ to $M$.  Then the $A_0$-submodule of $M$ generated by $V$ surjects onto $M/A_+M$, and so Proposition~\pref{prop:nakayama} shows that $V$ generates $M$ as an $A$-module, i.e., the map $A \otimes V \to M$ is surjective.  This shows that $M$ is finitely generated.
\end{proof}

\begin{corollary}
Suppose $A_0=\bC$.  Then an $A$-module $M$ is finitely generated if and only if $M/A_+M$ is a finite length object of $\cV$.
\end{corollary}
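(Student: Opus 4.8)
The plan is to deduce this immediately from the preceding Proposition, which asserts that $M$ is finitely generated over $A$ if and only if $M/A_+M$ is a finitely generated $A_0$-module. Specializing to $A_0=\bC$ (the unit object of $\cV$), all that remains is to identify the finitely generated $\bC$-modules in $\cV$ with the finite length objects of $\cV$, and then the corollary follows by substitution.

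First I would note that a $\bC$-module structure in $\cV$ carries no extra data: since $\bC=\bC\langle 0\rangle$ is the tensor unit, the multiplication map $\bC\otimes N\to N$ is canonically an isomorphism, so the category of $\bC$-modules in $\cV$ is just $\cV$ itself. Unwinding the definition of finite generation of $A$-modules in the case $A=\bC$, an object $N$ of $\cV$ is a finitely generated $\bC$-module precisely when there is a surjection $\bC\otimes V\to N$ with $V$ a finite length object of $\cV$; since $\bC\otimes V\cong V$, this says exactly that $N$ is a quotient of a finite length object. The next step is to observe that a quotient of a finite length object of $\cV$ is again of finite length: here I would invoke semisimplicity of $\cV$ (Theorem~\pref{equivthm} and the description of its simple objects), which gives that any object is a direct sum of simples and that the simple constituents of a quotient $N$ of $V$ form a sub-collection of those of $V$, so $N$ has finitely many simple constituents whenever $V$ does. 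Conversely a finite length object $N$ is visibly finitely generated over $\bC$, taking $V=N$. Hence finitely generated $\bC$-modules in $\cV$ are exactly the finite length objects, and combining this with the preceding Proposition proves the corollary.

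I do not anticipate any genuine obstacle here; the proof is essentially a restatement of the previous Proposition once two routine points are made, namely that $\bC$-modules in $\cV$ are the same as objects of $\cV$, and that semisimplicity lets one pass from ``quotient of a finite length object'' to ``finite length object.''
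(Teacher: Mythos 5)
Your proof is correct and is exactly the argument the paper leaves implicit: the corollary is the previous Proposition specialized to $A_0=\bC$, together with the routine observation (via semisimplicity of $\cV$) that finitely generated $\bC$-modules in $\cV$ are precisely the finite length objects.
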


\begin{Proposition}
An $A$-module $M$ is projective if and only if it is of the form $A \otimes_{A_0} M_0$ for some projective $A_0$-module $M_0$.
\end{Proposition}

\begin{proof}
By adjointness, $\Hom_A(A \otimes_{A_0} M_0, -)=\Hom_{A_0}(M_0, -)$ is exact, and so $A \otimes_{A_0} M_0$ is a projective $A$-module if $M_0$ is a projective $A_0$-module.  Suppose now that $M$ is a projective $A$-module.  Let $M_0=M/A_+M$, a projective $A_0$-module.  Since $M \to M_0$ is a surjection of $A_0$-modules and $M_0$ is projective, we can choose a section $M_0 \to M$.  By Nakayama's lemma, the induced map $f \colon A \otimes_{A_0} M_0 \to M$ is surjective.  Let $N=\ker{f}$.  Since $M$ is projective, the sequence
\begin{displaymath}
0 \to N/A_+N \to M_0 \to M/A_+M \to 0
\end{displaymath}
is exact.  As the right map is an isomorphism, we see that $N/A_+N=0$, and so $N=0$ by Nakayama's lemma.  Thus $f$ is an isomorphism, and so $M$ is isomorphic to $A \otimes_{A_0} M_0$.
\end{proof}

\begin{corollary}
Suppose $A_0=\bC$.  Then an $A$-module $M$ is projective if and only if it is of the form $A \otimes V$, for some $V$ in $\cV$.
\end{corollary}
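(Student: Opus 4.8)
The plan is to deduce this immediately from the preceding Proposition, which characterizes the projective $A$-modules as precisely those of the form $A \otimes_{A_0} M_0$ with $M_0$ a projective $A_0$-module. The only work is to interpret that statement in the special case $A_0 = \bC$, i.e., when $A_0$ is the unit object of $\cV$.

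First I would observe that when $A_0$ is the unit object, the category $\Mod_{A_0}$ of $A_0$-modules in $\cV$ coincides with $\cV$ itself, and the relative tensor product $\otimes_{A_0}$ coincides with the ambient tensor product of $\cV$; both facts are immediate from the definitions recalled in \S\ref{ss:alg} (a module over the unit object is just an object, and tensoring over the unit does nothing). In particular $A \otimes_{A_0} M_0 = A \otimes M_0$ for any $M_0$ in $\cV$.

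Second, I would recall that $\cV$ is a semisimple abelian category: every object is a direct sum of simple objects $\bS_\lambda$. In a semisimple category every object is projective, since any epimorphism onto it splits (its kernel is a direct summand). Hence, when $A_0 = \bC$, \emph{every} object $V$ of $\cV$ is automatically a projective $A_0$-module.

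Combining these two observations with the preceding Proposition gives the corollary in both directions at once: if $M$ is projective then $M \cong A \otimes_{A_0} M_0$ for some projective $A_0$-module $M_0$, and we may take $V = M_0$ to get $M \cong A \otimes V$; conversely, for any $V$ in $\cV$ the module $A \otimes V = A \otimes_{A_0} V$ is projective because $V$ is a projective $A_0$-module. I do not expect any real obstacle here; the only point that warrants care is the identification of $\Mod_{A_0}$ with $\cV$ and of $\otimes_{A_0}$ with the tensor product of $\cV$ when $A_0$ is the unit object, and this is purely formal.
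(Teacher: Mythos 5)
Your proposal is correct and is exactly the intended argument: the paper states this corollary without proof as an immediate consequence of the preceding Proposition, and the two observations you make (that $A_0 = \bC$ is the unit of $\cV$, so $\Mod_{A_0} = \cV$ and $\otimes_{A_0} = \otimes$; and that $\cV$ is semisimple, so every object is projective) are precisely what is needed to specialize it. There is nothing to add.
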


\subsection{Nilpotents and radicals}
\label{ss:tca-nilp}

\article
Let $A$ be a tca and let $x$ be an element of $A(\bC^{\infty})$.  We say that $x$ is {\bf nilpotent} if $x^n=0$ for some $n$.  We say that $x$ is {\bf strongly nilpotent} if there exist $g_1, \ldots, g_n \in \GL(\infty)$ such that the $g_i x$ are mutually disjoint and $(g_1 x) \cdots (g_n x)=0$.  We show in Corollary~\pref{cor:SNimpliesN} below that strongly nilpotent implies nilpotent.  We say that $A$ is {\bf reduced} if it has no nilpotents and {\bf weakly reduced} if it has no strong nilpotents.  Since $A(\bC^{\infty})$ is the union of the $A(\bC^n)$, it is clear that $A$ is reduced if and only if $A(\bC^n)$ is for each $n$.  We give an example in \pref{ex:weakdomain} which shows that weakly reduced does not imply strongly reduced.  It is not difficult to see that if $A$ has a strong nilpotent then it has a flat strong nilpotent (recall the definition of flat from \pref{art:defn:flat}).  It is therefore easy to see in the $\fs$-model if $A$ is weakly reduced.

\begin{Proposition} \label{prop:nilpotentideal}
An element $x$ is strongly nilpotent if and only if the ideal $\langle x \rangle$ is nilpotent.
\end{Proposition}

\begin{proof}
Suppose $\langle x \rangle^n=0$ for some $n \ge 1$.  Choose $g_1, \ldots, g_n \in \GL(\infty)$ such that the $g_i x$ are disjoint.  Then $g_i x$ belongs to $\langle x \rangle$ for each $i$, and so $(g_1 x) \cdots (g_n x)=0$.  This shows that $x$ is strongly nilpotent.  Now suppose that $x$ is strongly nilpotent, and choose $g_1, \ldots, g_n \in \GL(\infty)$ such that the $g_i x$ are disjoint and $(g_1 x) \cdots (g_n x)=0$.  Let $V$ be the $\GL$-module generated by $x$.  It suffices to show that any $n$-fold product of elements of $V$ vanishes.  By Proposition~\pref{prop:disjoint}, the element $(g_1 x) \otimes \cdots \otimes (g_n x)$ of $A \otimes \cdots \otimes A$ generates $V \otimes \cdots \otimes V$ as a $\GL(\infty)$-module.  It follows that $(g_1 x) \cdots (g_n x)=0$ generates the image of $\Sym^n(V) \to A$ as a $\GL(\infty)$-module, and therefore this image is 0.  This completes the proof.
\end{proof}

\begin{corollary} \label{cor:SNimpliesN}
``Strongly nilpotent'' implies ``nilpotent.''
\end{corollary}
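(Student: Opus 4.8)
The plan is to read this off directly from Proposition~\pref{prop:nilpotentideal}, since that proposition already packages the hard direction. Suppose $x \in A(\bC^{\infty})$ is strongly nilpotent. By Proposition~\pref{prop:nilpotentideal}, the ideal $\langle x \rangle$ is nilpotent, so there is an integer $n \ge 1$ with $\langle x \rangle^n = 0$. The only remaining observation is that $x$ itself belongs to $\langle x \rangle$ (it is, after all, a generator of that ideal), and therefore the ordinary $n$-fold power $x^n$ lies in the ideal $\langle x \rangle^n$, which is $0$. Hence $x^n = 0$, i.e., $x$ is nilpotent.

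There is essentially no obstacle to overcome here: all the substance is already contained in Proposition~\pref{prop:nilpotentideal} (which itself uses Proposition~\pref{prop:disjoint} to pass from the vanishing of a single product of disjoint translates $(g_1 x)\cdots(g_n x)$ to the vanishing of the whole image of $\Sym^n$ of the generated subrepresentation). The corollary is just the degenerate case in which one takes the specific element $x \in \langle x \rangle$ and uses nilpotence of the ideal. So the proof is a one-line deduction and I would present it as such.
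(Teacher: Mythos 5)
Your proof is correct and matches the paper's intent exactly: the corollary is presented in the paper without a separate argument because it is an immediate consequence of Proposition~\pref{prop:nilpotentideal} together with the trivial observation that $x \in \langle x \rangle$ forces $x^n \in \langle x \rangle^n = 0$. Nothing more is needed.
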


\article
We define the {\bf (nil)radical} of $A$, denoted $\rad(A)$, to be the set of nilpotents and the {\bf strong (nil)radical}, denoted $\srad(A)$, to be the set of strong nilpotents; both are ideals:

\begin{proposition}
The set of strong nilpotents is an ideal.
\end{proposition}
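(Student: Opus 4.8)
The plan is to reduce the whole statement to the characterization of strong nilpotents provided by Proposition~\pref{prop:nilpotentideal}, after which everything becomes routine commutative algebra inside the ring $A(\bC^{\infty})$. Recall that for $x \in A(\bC^{\infty})$ the ideal $\langle x \rangle$ is, by definition, the smallest $\GL(\infty)$-stable $A$-submodule of $A$ containing $x$ (equivalently, the ordinary ideal generated by all translates $gx$, $g \in \GL(\infty)$), that ideals of the tca $A$ are the same thing as $\GL(\infty)$-stable ideals of the honest commutative ring $A(\bC^{\infty})$, and that by Proposition~\pref{prop:nilpotentideal} an element $x$ is strongly nilpotent precisely when $\langle x \rangle$ is a nilpotent ideal. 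So $\srad(A) = \{x : \langle x\rangle \text{ nilpotent}\}$, and I need only check that this set is closed under the $\GL(\infty)$-action, under multiplication by arbitrary elements of $A$, and under addition.

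First I would dispatch the $\GL(\infty)$-action and multiplication by $A$. If $x$ is strongly nilpotent, $g \in \GL(\infty)$, and $a \in A(\bC^{\infty})$, then both $gx$ and $ax$ lie in the ideal $\langle x \rangle$, so the smallest ideals containing them satisfy $\langle gx \rangle \subseteq \langle x \rangle$ and $\langle ax \rangle \subseteq \langle x \rangle$; a subideal of a nilpotent ideal is nilpotent, so $\langle gx\rangle$ and $\langle ax\rangle$ are nilpotent and, by Proposition~\pref{prop:nilpotentideal} again, $gx$ and $ax$ are strongly nilpotent. This already shows $\srad(A)$ is $\GL(\infty)$-stable and closed under the $A$-action (and under scaling), so it remains only to handle addition.

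For closure under addition I would take $x, y$ strongly nilpotent and pick $m, n$ with $\langle x\rangle^{m} = 0$ and $\langle y\rangle^{n} = 0$. Since $\langle x\rangle + \langle y\rangle$ is again a $\GL(\infty)$-stable $A$-submodule of $A$ and contains $x+y$, we get $\langle x+y\rangle \subseteq \langle x\rangle + \langle y\rangle$. Expanding in the commutative ring $A(\bC^{\infty})$,
\[
(\langle x\rangle + \langle y\rangle)^{m+n-1} = \sum_{i=0}^{m+n-1} \langle x\rangle^{i}\,\langle y\rangle^{m+n-1-i},
\]
and in every summand either $i \ge m$ or $m+n-1-i \ge n$, so each term vanishes. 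Hence $\langle x\rangle + \langle y\rangle$ is nilpotent, so is its subideal $\langle x+y\rangle$, and $x+y$ is strongly nilpotent. I do not expect a genuine obstacle: all the substance is already contained in Proposition~\pref{prop:nilpotentideal} (which rests on Proposition~\pref{prop:disjoint}), and the only point needing care is the formal inclusion $\langle z \rangle \subseteq \mathfrak{a}$ whenever $\mathfrak{a}$ is an ideal of the tca containing $z$, which is immediate from the minimality in the definition of $\langle z\rangle$.
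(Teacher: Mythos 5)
Your argument is correct and is essentially the paper's: both reduce closure of $\srad(A)$ under addition and the $A$-action to the characterization in Proposition~\pref{prop:nilpotentideal} together with the inclusions $\langle x+y\rangle\subset\langle x\rangle+\langle y\rangle$ and $\langle ax\rangle\subset\langle x\rangle$ (the paper writes $\langle ax\rangle\subset\langle a\rangle\langle x\rangle$, but since $\langle a\rangle\langle x\rangle\subset\langle x\rangle$ this is the same observation), and then use that subideals and sums of nilpotent ideals are nilpotent. You merely spell out the nilpotency of a sum of nilpotent ideals, and explicitly record $\GL(\infty)$-stability, both of which the paper leaves implicit.
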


\begin{proof}
Suppose $x$ and $y$ are strongly nilpotent.  Then $\langle x+y \rangle \subset \langle x \rangle + \langle y \rangle$, and is therefore nilpotent. Similarly, if $a$ is an arbitrary element of $A$ then $\langle ax \rangle \subset \langle a \rangle \langle x \rangle$, and is therefore nilpotent.  This shows that $x+y$ and $ax$ are strongly nilpotent by Proposition~\pref{prop:nilpotentideal}.
\end{proof}

For an ideal $I$, we define $\rad(I)$ to be the inverse image of $\rad(A/I)$ and $\srad(I)$ to be the inverse image of $\srad(A/I)$.  We always have an inclusions $I \subset \srad(A/I) \subset \rad(A/I)$.

\begin{Proposition}
If $A$ is weakly noetherian then $\srad(A)$ is nilpotent.
\end{Proposition}

\begin{proof}
Since $A$ is weakly noetherian, the ideal $\srad(A)$ is finitely generated, i.e., of the form $\sum_{i=1}^n \langle x_i \rangle$.  Since each $x_i$ is strongly nilpotent, each $\langle x_i \rangle$ is nilpotent, which shows that $\srad(A)$ is nilpotent.
\end{proof}

\begin{remark}
The example of \pref{ex:weakdomain} shows that the nilradical need not be nilpotent, even in a finitely generated weakly noetherian tca.
\end{remark}

\subsection{Domains and primes}
\label{ss:tca-prime}

\article
\label{domain}
Let $A$ be a tca and let $x$ be an element of $A(\bC^{\infty})$.  We say that $x$ is a {\bf zero-divisor} if it is non-zero and there exists a non-zero $y \in A(\bC^{\infty})$ such that $xy=0$.  We say that $x$ is a {\bf strong zero-divisor} it is non-zero and there exists a non-zero $y \in A(\bC^{\infty})$ disjoint from $x$ such that $xy=0$.  We say that $A$ is a {\bf domain} if it has no zero-divisors and a {\bf weak domain} if it has no strong zero divisors.  Since $A(\bC^{\infty})$ is the union of the $A(\bC^n)$, it is clear that $A$ is a domain if and only if $A(\bC^n)$ is for each $n$.  One easily sees that if $A$ is not a weak domain then there exist non-zero disjoint flat elements $x$ and $y$ with $xy=0$ (recall the definition of flat from \pref{art:defn:flat}).  It is therefore easy to tell in the $\fs$-model if $A$ is a weak domain.  Note that ``weak domain'' implies ``weakly reduced.''

\begin{Proposition} \label{prop:domain}
``Domain'' is equivalent to ``weak domain and reduced.''
\end{Proposition}

\begin{proof}
It is clear that a domain is a reduced weak domain.  We now prove the converse.  Suppose that $A$ is not a domain but is reduced; we will show that $A$ is not a weak domain.

Since $A$ is not a domain we can find non-zero elements $x$ and $y$ such that $xy=0$.  We claim that we can take $x$ and $y$ to be weight vectors.  Totally order $\bZ^{\infty}$ lexicographically.  This order respects addition, i.e., if $\mu<\mu'$ and $\nu<\nu'$ then $\mu+\nu<\mu'+\nu'$.  Write $x=\sum_{i=1}^n x_i$ where $x_i$ is a non-zero weight vector of weight $\lambda_i$ and $\lambda_1<\cdots<\lambda_n$.  Similarly, write $y=\sum_{j=1}^m y_i$ where $y_j$ is a non-zero weight vector of weight $\mu_j$ and $\mu_1<\cdots<\mu_m$.  Then $0=xy=\sum_{i,j} x_i y_j$.  The term $x_1 y_1$ is the only one of weight $\lambda_1+\mu_1$, since all other terms have larger weight.  Thus $x_1 y_1=0$, which proves the claim.

So let $x$ and $y$ be non-zero weight vectors with $xy=0$.  We claim that if $X$ is a degree $n$ element of $\rU(\mf{gl}(\infty))$ then $(Xx) y^{n+1}=0$.  This is clear if $n=0$.  Suppose it is true for $n-1$ and let us prove it for $n$.  Write $X=X_1 X_2$ where $X_2$ has degree $n-1$ and $X_1$ has degree 1.  Then $(X_2 x) y^n=0$ by induction.  Applying $X_1$, we find
\begin{displaymath}
(X x) y^n + (X_2 x) n y^{n-1} (X_1 y) = 0.
\end{displaymath}
Multiplying by $y$ kills the second term, and so $(X x)y^{n+1}=0$.  This proves the claim.

Now, we can choose $X \in \rU(\mf{gl}(\infty))$ such that $Xx$ is non-zero and disjoint from $y$.  Since $y$ is not nilpotent, the equation $(Xx)y^{n+1}=0$ shows that $A$ is not a weak domain.
\end{proof}

\begin{Proposition}
\label{weak:rad}
A weak domain with nilpotent nilradical is reduced, and thus a domain.
\end{Proposition}

\begin{proof}
Let $A$ be a tca.  Assume that $\rad(A)$ is non-zero but nilpotent, say $\rad(A)^n=0$ but $\rad(A)^{n-1} \ne 0$.  We can then choose non-zero disjoint elements $x \in \rad(A)$ and $y \in \rad(A)^{n-1}$.  We then have $xy=0$, which shows that $A$ is not a weak domain.  Thus if $A$ is a weak domain and $\rad(A)$ is nilpotent then $\rad(A)=0$.
\end{proof}

\article
We say that an ideal $\fp$ of $A$ is {\bf prime} if $A/\fp$ is a domain.  Similarly, we say that $\fp$ is {\bf weakly prime} if $A/\fp$ is a weak domain.  By definition, $\fp$ is prime if and only if whenever $xy \in \fp$ either $x \in \fp$ or $y \in \fp$; similarly, $\fp$ is weakly prime if and only if whenever $xy \in \fp$ and $x$ and $y$ are disjoint either $x \in \fp$ or $y \in \fp$.  By the comments of \pref{domain}, $\fp$ is prime if and only if $\fp(\bC^n)$ is a prime ideal of $A(\bC^n)$ for each $n$.  Proposition~\pref{prop:domain} shows that $\fp$ is prime if and only if it is radical and weakly prime.  If $\fp$ is a weak prime then $\rad(\fp)$ is both weakly prime and radical, and thus prime.

\article[A non-reduced weak domain]
\label{ex:weakdomain}
Let $A$ be the polynomial ring $\Sym(\bC\langle 2 \rangle) = \Sym(\bC^\infty \otimes \bC^\infty)$.  We let $x_i$ be a basis of $\bC^{\infty}$ and write $[x_i x_j]$ in place of $x_i \otimes x_j$, so that we can think of $A$ as the polynomial ring on variables $[x_i x_j]$.  Let $B$ be the quotient of $A$ by the ideal $\langle [x_1 x_2]^2 \rangle$.  Obviously, $B$ is not reduced; in fact, \emph{every} positive degree element of $B$ is nilpotent, since $[x_1 x_2]$ generates the ideal $A_+$ in $A$, and thus the ideal $B_+$ in $B$.  Nonetheless, we claim that $B$ is a weak domain.

We now prove this claim.  Applying $\frac{1}{2} X_{1,3}X_{2,4} \in \rU(\mf{gl}(\infty))$ to $[x_1 x_2]^2$ we obtain the flat element $[x_1 x_2][x_3 x_4]+[x_1 x_4][x_3 x_2]$; furthermore, applying $\frac{1}{2} X_{3,1}X_{4,2}$ to this gives $[x_1 x_2]^2$.  Thus $\langle [x_1x_2]^2 \rangle=\langle [x_1 x_2][x_3 x_4]+[x_1 x_4][x_3 x_2] \rangle$.  If the $\fs$-model, we can think of $A_L$ as the vector space with basis the set of tableaux with two rows in which every element of $L$ appears exactly once (the order of the columns is irrelevant).  So we can think of $B_L$ as the quotient of this space by the identity
\begin{displaymath}
\begin{array}{|c|c|}
\hline
a & b \\
\hline
c & d \\
\hline
\end{array}  = (-1) \cdot
\begin{array}{|c|c|}
\hline
a & b \\
\hline
d & c \\
\hline 
\end{array}.
\end{displaymath}
If we totally order $L$ then $A_L$ has a basis in which the first row of the tableaux is increasing from left to right.  Using the above relation, we see that $B_L$ has a basis consisting of tableaux in which each row is increasing.  Thus if $L$ and $L'$ are totally ordered and we totally order $L \amalg L'$ by putting $L'$ after $L$, then the multiplication map $B_L \otimes B_{L'} \to B_{L \amalg L'}$ takes distinct pairs of basis element to distinct basis elements (where we use the tensor product basis of $B_L \otimes B_{L'}$), and is therefore injective.  This shows that $B$ is a weak domain.

We thus have the following examples:
\begin{itemize}
\item A nilpotent which is not strongly nilpotent (the element $[x_1 x_2]$ of $B$).
\item A weak domain which is not a domain (the tca $B$).
\item A weak prime which is not a prime (the ideal $\langle [x_1 x_2]^2 \rangle$ of $A$).
\item A tca with a non-nilpotent nilradical ($B$, as if $\rad(B)$ were nilpotent then it would vanish by Proposition~\pref{weak:rad}).
\end{itemize}
The tca $A$ is finitely generated and weakly noetherian \pref{sym2:wnoeth} and we expect it to be noetherian, though we have not proved this yet; these properties transfer to the quotient $B$.  Thus these tca's are as nice as one could want, and so the above examples are not bizarre pathologies, but genuine behavior.  However, we will see that such examples do not exist for bounded tca's (see \S\ref{ss:bdprime}, especially Corollary~\pref{cor:bd-equiv}).

\begin{Example}
The tca $A=\Sym(\bC\langle 1 \rangle)$ has exactly two prime ideals:  the zero ideal and its maximal ideal $A_+$.
\end{Example}

\begin{Example}
Let $A=\Sym(\bC\langle 1 \rangle)$ with $\dim(U)=2$.  Then $A$ has the zero ideal and $A_+$ as its unique minimal and maximal prime ideals.  However, there are many more primes.  Let $U \to L$ be a one dimensional quotient of $U$.  Then the kernel $\mf{p}_L$ of $\Sym(U\langle 1 \rangle) \to \Sym(L\langle 1 \rangle)$ is a prime ideal of $A$.  There is yet one more prime ideal of $A$.  Thinking of $A(\bC^{\infty})=\Sym(U \otimes \bC^{\infty})$ as $2 \times \infty$ matrices, let $\mf{P}$ be the ideal generated by all $2 \times 2$ minors.  Then $\mf{P}$ is prime.  The primes just listed exhaust the prime ideals, and we have the following containments among them:
\begin{equation}
0 \subset \mf{p}_L \subset \mf{P} \subset A_+
\end{equation}
There are no other containments.
\end{Example}

\begin{Example}
Let $A=\Sym(\Sym^2)$.  One can think of $X=\Spec(A(\bC^{\infty}))$ as the space of quadratic forms on $\bC^{\infty}$ (although one should be careful here!).  Let $X_n \subset X$ (for $n \ge 0$) be the subspace consisting of quadratic forms of rank $\le n$.  Then $X_n$ corresponds to a prime ideal $\mf{p}_n$ of $A$.  In fact, $\mf{p}_n$ is the ideal generated by the partition $(2^{n+1})$.  The primes $\mf{p}_n$ are mutually unequal and exhaust all primes ideals of $A$.  They form a single decreasing chain:
\begin{displaymath}
0=\mf{p}_{\infty} \subset \cdots \subset \mf{p}_2 \subset \mf{p}_1 \subset \mf{p}_0=A_+
\end{displaymath}
By \pref{sym2:wnoeth}, $A$ is weakly noetherian.  This example shows that prime ideals in a weakly noetherian tca need not satisfy the descending chain condition. The analogous situation does not occur for noetherian local rings in usual commutative algebra \cite[Theorem 12.1]{eisenbud}.  There are other aspects of classical dimension theory that go awry in the tca setting as well.
\end{Example}

\begin{Proposition}
\label{minprime}
Let $A$ be a tca.  The minimal primes of $A$ coincide with the minimal primes of $A(\bC^{\infty})$.
\end{Proposition}

\begin{proof}
Since $\GL(\infty)$ is connected, it acts trivially on the set of minimal primes of $A(\bC^{\infty})$. (Geometrically, if $X$ is an irreducible component defined by a minimal prime, then $\GL(\infty) \times X$ is irreducible so cannot contain more than one component in its image under the action map.) So every minimal prime of $A(\bC^{\infty})$ is $\GL$-stable, and thus a prime ideal of $A$, obviously minimal.  If $\fp$ is a minimal prime of $A$, then $\fp(\bC^{\infty})$ is necessarily a minimal prime of $A(\bC^{\infty})$: if it were not then it would properly contain some minimal prime $\fq(\bC^{\infty})$, and then $\fp$ would properly contain $\fq$.
\end{proof}

\begin{Proposition}
\label{rad1}
Let $\fa$ be an ideal of $A$.  Then $\rad(\fa)$ is the intersection of the primes containing $\fa$.
\end{Proposition}

\begin{proof}
It suffices to show that $\rad(A)$ is the intersection of the minimal primes of $A$.  Since $\rad(A)$ is just the set of nilpotents of $A(\bC^{\infty})$ and the minimal primes of $A$ are just the minimal primes of $A(\bC^{\infty})$, this follows from the usual statement from commutative algebra \cite[Prop.~1.14]{atiyahmacdonald}.
\end{proof}

\begin{Proposition}
\label{rad2}
Let $\fa$ be an ideal of $A$.  Then $\srad(\fa)$ is the intersection of the weak primes containing $\fa$.
\end{Proposition}

\begin{proof}
We model this proof on that of \cite[Prop.~1.8]{atiyahmacdonald}.  Passing to $A/\fa$, it suffices to treat the case where $\fa=0$.  Any strongly nilpotent element is contained in every weak prime ideal, so one containment is clear.  Let $f$ be an element of $A(\bC^{\infty})$ which is not strongly nilpotent.  We will construct a weak prime ideal not containing $f$.  Let $S$ be the set of ideals which contain no power of $\langle f \rangle$.  This set is non-empty (it contains the zero ideal), and closed under directed unions, and therefore contains some maximal element $\fp$.  We claim that $\fp$ is weakly prime.  To show this, suppose that $x$ and $y$ are disjoint elements such that $xy \in \fp$ but $x, y \not\in \fp$.  Then $\fp+\langle x \rangle$ and $\fp+\langle y \rangle$ are strictly larger than $\fp$, and so do not belong to $S$.  We therefore have $\langle f \rangle^n \in \fp+\langle x \rangle$ and $\langle f \rangle^m \in \fp+\langle y \rangle$ for some $n,m>0$.  We therefore find
\begin{equation}
\langle f \rangle^{n+m} \in \fp+\langle x \rangle \langle y \rangle=\fp+\langle xy \rangle=\fp,
\end{equation}
where in the first equality we used Lemma~\pref{ideal:mult} and in the second equality, we used that $xy$ belongs to $\fp$.  This equation is a contradiction, so we conclude that $\fp$ is weakly prime.
\end{proof}

\subsection{Localization}

\noarticle
Localization is a central tool in the study of commutative rings.  Unfortunately, the class of tca's is not closed under localization; this is similar to the fact that the class of non-negatively graded rings is not closed under localization.  Nonetheless, we can still access module categories over what would be the localized algebras by considering a quotient category.  We now discuss this construction.

\article
Let $A$ be a tca, $M$ an $A$-module and $\fa$ an ideal of $A$.  We let $M[\fa]$ be the maximal submodule of $M$ annihilated by $\fa$.  In the $\GL$-model, $M[\fa](\bC^{\infty})$ is just the set of elements of $M(\bC^{\infty})$ annihilated by $\fa(\bC^{\infty})$.  We define $M[\fa^{\infty}]$ to be the union of the $M[\fa^n]$.

\article
Let $\fp$ be a prime ideal of $A$.  Consider the full subcategory $\Mod^{\fp}_A$ of $\Mod_A$ consisting of modules $M$ for which $M[\fp^{\infty}]=M$.  Then $\Mod^{\fp}_A$ is a Serre subcategory of $\Mod_A$, and we define $\Mod_{A_{\fp}}$ to be the Serre quotient category
\[
\Mod_{A_{\fp}} = \Mod_A / \Mod^{\fp}_A
\]
(see \cite[\S 3.1]{gabriel} for the relevant definitions). The objects of the quotient category are the same as the original category, but we have inverted all morphisms whose kernel and cokernel belong to the subcategory. Note that $A_{\fp}$ is just a symbol here, and does not refer to any algebra.  There is a natural localization functor $\Mod_A \to \Mod_{A_{\fp}}$ which is exact.

\article
\label{modK}
A particularly important case occurs when $A$ is a domain and $\fp$ is the zero ideal.  In this case, we usually introduce a formal symbol $K$ to mean (intuitively) the fraction field of $A$, and write $\Mod_K$ in place of $\Mod_{A_{\fp}}$.  In fact, there is a more explicit construction of this category, as follows.  Define $K$ to be the set of degree zero elements in the ring obtained by inverting all positive degree elements of $A(\bC^{\infty})$.  Then $K$ is an actual field, equipped with an action of $\GL(\infty)$.  A {\bf semilinear} representation of $\GL(\infty)$ over $K$ is a $K$-vector space $V$ equipped with an action of $\GL(\infty)$ such that $g(av)=(ga)(gv)$ for all $g \in \GL(\infty)$, $a \in K$ and $v \in V$.  Such a representation is {\bf polynomial} if there exists a $\bC$-subspace $U$ of $V$ which forms a polynomial representation of $\GL(\infty)$ and spans $V$ over $K$.  With this terminology in hand, we can describe $\Mod_K$ as the category of polynomial semilinear representations of $\GL(\infty)$ over $K$.  Furthermore, the objects of $\Mod_K$ which are localizations of finitely generated objects of $\Mod_A$ are precisely the finite dimensional polynomial semilinear representations; in particular, the image in $\Mod_K$ of a finitely generated object of $\Mod_A$ has finite length. 

\article
Let $A$ be a tca and let $\fp$ be a prime of $A$.  We introduce the formal symbol $\kappa(\fp)$ for the residue field of $\fp$, i.e., the localization of $A/\fp$ at 0.  If $M$ is an $A/\fp$-module, we let $\len_{\fp}(M)$ be the dimension of the image of $M$ under the localization functor $\Mod_{A/\fp} \to \Mod_{\kappa(\fp)}$.  More generally, if $M$ is an $A$-module annihilated by $\fp^n$ we put
\begin{equation}
\len_{\fp}(M) = \sum_{i=0}^{n-1} \len_{\fp}(\fp^i M/\fp^{i+1} M)
\end{equation}
This number is finite if $M$ and $\fp$ are both finitely generated.

\begin{Remark}
The example $A = \Sym(\bC\langle 1 \rangle)$ is studied in detail in \cite{symc1}. We summarize some of the results from that paper in \S\ref{ss:symc1}.
\end{Remark}

\xsection{Bounded tca's}
\label{sec:tca-bd}

\subsection{Boundedness}

\article
Recall that for a partition $\lambda$ we write $\ell(\lambda)$ for the number of parts of $\lambda$, or equivalently, the number of rows in the corresponding Young diagram.  We now extend this notation to objects of $\cV$.  Precisely, for an object $V$ of $\cV$ we define $\ell(V)$ to be the supremum of $\ell(\lambda)$ over $\lambda$ for which $\bM_{\lambda}$ occurs in $V$ with non-zero multiplicity.  We say that $V$ is {\bf bounded} if $\ell(\lambda)$ is finite.  The Littlewood--Richardson rule \pref{ss:lw} implies that $\ell(M \otimes N) = \ell(M)+\ell(N)$ for any $M$ and $N$ in $\cV$.  In particular, the class of bounded objects is closed under tensor product.

\article
Let $\cV^{\le n}$ be the full subcategory of $\cV$ on objects $V$ with $\ell(V) \le n$.  Then $\cV^{\le n}$ is an abelian subcategory of $\cV$.  The inclusion $\cV^{\le n} \to \cV$ has a two-sided adjoint $\tau^{\le n}\colon \cV \to \cV^{\le n}$.  This functor simply kills the simple objects $\bS_{\lambda}$ with $\ell(\lambda)>n$.  The category $\cV^{\le n}$ is not closed under tensor product.  However, one can define a tensor product on $\cV^{\le n}$ by first forming the tensor product in $\cV$ and then applying $\tau^{\le n}$.  This gives $\cV^{\le n}$ the structure of a symmetric tensor category.  We have the following extremely important proposition:

\begin{Proposition} \label{prop:truncation}
The functor $\cV^{\le n} \to \Rep^{\pol}(\GL(n))$ taking $F$ to $F(\bC^n)$ is an equivalence of symmetric tensor categories.
\end{Proposition}

\begin{proof}
By the Littlewood--Richardson rule \pref{ss:lw}, this functor preserves tensor products of simple objects. Since both categories in question are semisimple, we get the result in general.
\end{proof}

\begin{corollary} \label{cor:boundedtcasubspaces}
Let $M$ be a bounded object of $\cV$ and let $n \ge \ell(M)$.  Then the map $N \mapsto N(\bC^n)$ induces an order-preserving bijection
\begin{displaymath}
\{ \textrm{subobjects of $M$} \} \to \{ \textrm{$\GL(n)$-stable subspaces of $M(\bC^n)$} \}.
\end{displaymath}
\end{corollary}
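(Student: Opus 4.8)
The plan is to deduce this directly from the equivalence of symmetric tensor categories in Proposition~\pref{prop:truncation}, with only two small bookkeeping lemmas needed. First I would note that since $M$ is bounded with $\ell(M) \le n$, every simple constituent $\bS_{\lambda}$ occurring in $M$ has $\ell(\lambda) \le n$, so $M$ is an object of $\cV^{\le n}$. Moreover $\cV^{\le n}$ is closed under passage to subobjects inside $\cV$: if $N \subseteq M$ in $\cV$, then every simple appearing in $N$ already appears in $M$ (as $\cV$ is semisimple, $N$ is a direct summand of $M$), hence has at most $n$ rows, so $N$ lies in $\cV^{\le n}$. Consequently the poset of subobjects of $M$ formed in $\cV$ coincides with the poset of subobjects of $M$ formed in $\cV^{\le n}$.

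Next I would apply the functor $F \mapsto F(\bC^n)$, which by Proposition~\pref{prop:truncation} is an equivalence $\cV^{\le n} \xrightarrow{\cong} \Rep^{\pol}(\GL(n))$. Any equivalence of abelian categories induces an order-preserving bijection between the subobject posets of corresponding objects; applied to $M$ and $M(\bC^n)$, this yields an order-preserving bijection between subobjects of $M$ in $\cV^{\le n}$ (equivalently, in $\cV$, by the previous paragraph) and subobjects of $M(\bC^n)$ in $\Rep^{\pol}(\GL(n))$. Finally I would identify the latter with $\GL(n)$-stable subspaces of the vector space $M(\bC^n)$: a $\GL(n)$-stable subspace of a polynomial representation is itself a polynomial representation (it is a subquotient of the ambient one), so subobjects of $M(\bC^n)$ computed in $\Rep^{\pol}(\GL(n))$ are precisely its $\GL(n)$-stable subspaces. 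Composing these order-preserving bijections gives the map $N \mapsto N(\bC^n)$ of the statement, and tracing through the identifications shows it is the one claimed.

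I do not expect any genuine obstacle: the content is entirely in Proposition~\pref{prop:truncation}, and the remaining steps are the two ``closed under subobjects'' observations, each of which is immediate from semisimplicity (subobjects of semisimple objects are direct summands, so their simple constituents are a subset of the ambient ones). If one wanted to avoid invoking $\cV^{\le n}$ explicitly, one could instead check directly that $F \mapsto F(\bC^n)$ is full and faithful on the subcategory of objects with $\ell \le n$ and that a $\GL(n)$-subrepresentation of $M(\bC^n)$ is of the form $N(\bC^n)$ for a unique such $N$, but routing the argument through the already-stated equivalence is the cleanest presentation.
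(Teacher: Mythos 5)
Your argument is correct and is essentially the paper's own: the paper states this as an immediate corollary of Proposition~\pref{prop:truncation} without further proof, and your unpacking (that $M$ and all its subobjects lie in $\cV^{\le n}$ by semisimplicity, that the equivalence of abelian categories induces a bijection of subobject posets, and that subobjects of $M(\bC^n)$ in $\Rep^{\pol}(\GL(n))$ are precisely the $\GL(n)$-stable subspaces) is exactly the intended reasoning.
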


\article
Corollary~\pref{cor:boundedtcasubspaces} gives the fundamental principle of bounded objects:  one can evaluate on $\bC^n$ for $n$ sufficiently large and not lose information.  This can be used to reduce many questions about bounded tca's to questions about finitely generated $\bC$-algebras.  In the next few results, we begin to see how this works.

\begin{Proposition} \label{prop:boundedtcasubmodules}
Let $A$ be a bounded tca.  Then any finitely generated $A$-module is bounded.  If $M$ is a finitely generated $A$-module and $n \ge \ell(M)$ then $N \mapsto N(\bC^n)$ induces an order-preserving bijection
\begin{displaymath}
\{ \textrm{$A$-submodules of $M$} \} \to \{ \textrm{$\GL(n)$-stable $A(\bC^n)$-submodules of $M(\bC^n)$} \}
\end{displaymath}
\end{Proposition}

\begin{proof}
Let $M$ be a finitely generated $A$-module.  Choose a finite length object $V$ such that $M$ is a quotient of $A \otimes V$.  Then $V$ is bounded since it has finite length, and so $A \otimes V$ is bounded, and so $M$ is bounded.  This proves the first statement.  As to the second, we know that the map is injective by Corollary~\pref{cor:boundedtcasubspaces}.  It thus suffices to show it is surjective.  Thus let $N_0$ be a $\GL(n)$-stable $A(\bC^n)$-submodule of $M(\bC^n)$.  By Corollary~\pref{cor:boundedtcasubspaces}, there is a unique subobject $N$ of $M$ such that $N_0=N(\bC^n)$.  We must show that $N$ is an $A$-submodule of $M$, i.e., that the dotted arrow can be filled in
\[
\xymatrix{ A \otimes N \ar[r] \ar@{-->}[rd] & M \\ & N \ar[u] }
\]
By our assumptions on $N_0$, this is true after evaluating on $\bC^n$, so by Proposition~\pref{prop:truncation}, it is true after we apply $\tau^{\le n}$. But $M = \tau^{\le n}(M)$, so the multiplication map factors as $A \otimes N \to \tau^{\le n}(A \otimes N) \to M$. Set the dotted arrow to be the composition $A \otimes N \to \tau^{\le n}(A \otimes N) \to \tau^{\le n}(N) = N$.
\end{proof}

\begin{remark}
The bijection $N \mapsto N(\bC^n)$ preserves intersections and sums of submodules.  This can either be seen directly, or from the fact that these operations are determined by the poset structure on the set of submodules.
\end{remark}

\begin{corollary} \label{cor:boundedideals}
Let $A$ be a bounded tca and let $n \ge \ell(A)$.  Then $\fa \mapsto \fa(\bC^n)$ gives an order-preserving bijection between ideals of $A$ and $\GL(n)$-stable ideals of $A(\bC^n)$.
\end{corollary}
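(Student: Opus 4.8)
The plan is to deduce this immediately from Proposition~\pref{prop:boundedtcasubmodules} by taking $M = A$. First I would note that $A$ is a finitely generated $A$-module: it is generated by the unit $1 \in A_0$, so the canonical isomorphism $A \otimes \bC \to A$ (with $\bC$ regarded as a degree $0$, hence finite length, object of $\cV$) exhibits $A$ as a quotient of $A \otimes V$ with $V$ of finite length. In particular, the first assertion of Proposition~\pref{prop:boundedtcasubmodules} applies and tells us $A$ is bounded (which of course we already know, since $A$ is a bounded tca by hypothesis), so that $\ell(A)$ is finite and the condition $n \ge \ell(A)$ is meaningful.

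Next I would match up the two sides of the asserted bijection with those of Proposition~\pref{prop:boundedtcasubmodules} in the case $M = A$. By definition of an ideal of a tca (see the discussion in \S\ref{sec:tca-gen}), an ideal of $A$ is precisely an $A$-submodule of $A$; and a $\GL(n)$-stable ideal of the ordinary commutative ring $A(\bC^n)$ is precisely a $\GL(n)$-stable $A(\bC^n)$-submodule of $A(\bC^n)$. Under these identifications, the map $\fa \mapsto \fa(\bC^n)$ of the corollary is literally the map $N \mapsto N(\bC^n)$ of Proposition~\pref{prop:boundedtcasubmodules} with $M = A$ and $n \ge \ell(A)$, and hence it is an order-preserving bijection.

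Since the corollary is a direct specialization of an already-proved result, there is essentially no obstacle; the only point requiring a word is the unwinding of the definition of ``ideal of a tca'' as ``$A$-submodule of $A$,'' and the parallel statement for $A(\bC^n)$. If one instead wanted a self-contained argument that bypasses Proposition~\pref{prop:boundedtcasubmodules}, one could apply Corollary~\pref{cor:boundedtcasubspaces} with $M = A$ to get the order-preserving bijection between subobjects of $A$ and $\GL(n)$-stable subspaces of $A(\bC^n)$, and then verify (exactly as in the proof of Proposition~\pref{prop:boundedtcasubmodules}, using Proposition~\pref{prop:truncation} together with $A = \tau^{\le n}(A)$) that a subobject $\fa \subseteq A$ is closed under multiplication by $A$ if and only if $\fa(\bC^n)$ is closed under multiplication by $A(\bC^n)$.
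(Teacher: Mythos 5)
Your proof is correct and is exactly the paper's intended argument: the corollary is stated without proof precisely because it is the specialization $M = A$ of Proposition~\pref{prop:boundedtcasubmodules}, together with the observation that ideals of a tca are by definition $A$-submodules of $A$ and that $A$ is finitely generated over itself. The additional self-contained route you sketch via Corollary~\pref{cor:boundedtcasubspaces} is also sound, but the direct specialization is what the paper has in mind.
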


\begin{Proposition}
\label{cor:noeth}
Let $A$ be a bounded tca.  Then the following are equivalent:
\begin{enumerate}[\rm (a)]
\item $A$ is noetherian.
\item $A$ is weakly noetherian.
\item $A_0$ is noetherian and $A$ is finitely generated over $A_0$.
\item $A(\bC^n)$ is noetherian for all $n$ (or even all $n \gg 0$).
\end{enumerate}
\end{Proposition}

\begin{proof}
We have previously shown (a) $\implies$ (b) $\implies$ (c) $\implies$ (d) (Proposition~\pref{finite:imps}), so we must show that (d) implies (a).  Thus let $A$ be a bounded tca such that $A(\bC^n)$ is noetherian for all $n>N$.  Let $M$ be a finitely generated $A$-module and let $n \ge \max(N, \ell(M))$.  An ascending chain of $A$-submodules of $M$ yields an ascending chain of $\GL(n)$-stable $A(\bC^n)$-submodules of $M(\bC^n)$.  Such a chain necessarily stabilizes, since $A(\bC^n)$ is a noetherian ring and $M(\bC^n)$ is a finitely generated $A(\bC^n)$-module.  Thus, by Proposition~\pref{prop:boundedtcasubmodules}, the original chain stabilizes as well.  This shows that $M$ is a noetherian $A$-module, and so $A$ is noetherian.
\end{proof}

\subsection{Examples}

\begin{Proposition}
The tca $A=\Sym(U\langle 1 \rangle)$ is bounded and noetherian if $\dim(U)$ is finite.  In fact, $\ell(A)=\dim(U)$.
\end{Proposition}

\begin{proof}
The computation of $\ell(A)$ follows from the decomposition \eqref{eq:multipol-1} and the remarks that follow it, while noetherianity follows from Proposition~\pref{cor:noeth}.
\end{proof}

\begin{corollary}
A finitely generated tca that is generated in degree $1$ is bounded and noetherian.
\end{corollary}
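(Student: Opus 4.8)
The plan is to realize $A$ as a quotient of one of the algebras $\Sym(U\langle 1 \rangle)$ treated in the preceding proposition, and then simply push boundedness and noetherianity through that quotient map.

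First I would unwind the hypothesis. That $A$ is finitely generated means there is a surjection $\Sym(V) \to A$ with $V$ of finite length in $\cV$, and ``generated in degree $1$'' means we may take $V$ concentrated in degree $1$. The degree-$1$ part $\cV_1$ of $\cV$ is equivalent to $\Vec$ (in the sequence model $\Rep(S_1)=\Vec$), so such a $V$ is isomorphic to $U\langle 1 \rangle$ with $U=V_1$ finite dimensional. Hence $A$ is a quotient of $B:=\Sym(U\langle 1 \rangle)$.

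For boundedness, the preceding proposition gives $\ell(B)=\dim U<\infty$; concretely the decomposition \eqref{eq:multipol-1} lists exactly which simples occur in $B$. Since $\cV$ is semisimple, the surjection $B \to A$ splits, so $A$ is a direct summand of $B$ and every simple occurring in $A$ occurs in $B$; thus $\ell(A)\le \ell(B)=\dim U$ and $A$ is bounded. For noetherianity I would then invoke Proposition~\pref{cor:noeth}: for a bounded tca it suffices to check that $A(\bC^n)$ is noetherian for all $n$ (indeed for $n\gg 0$). Evaluation on $\bC^n$ is exact, so $B \to A$ yields a surjection of $\bC$-algebras $B(\bC^n)=\Sym(U\otimes\bC^n)\to A(\bC^n)$; the source is a polynomial ring in $\dim(U)\dim(\bC^n)$ variables, hence noetherian by the Hilbert basis theorem, so $A(\bC^n)$ is noetherian. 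Proposition~\pref{cor:noeth} then gives that $A$ is noetherian.

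I do not expect a real obstacle here; the only points needing a line of care are the identification of a finite-length degree-$1$ object with some $U\langle 1 \rangle$ and the (routine, semisimplicity) fact that a quotient of a bounded object is bounded. An alternative for noetherianity that bypasses the bounded-tca machinery is to note that $A$-modules are $B$-modules via $B\to A$, finitely generated $A$-modules are finitely generated over $B$, and $A$-submodules of an $A$-module coincide with its $B$-submodules, so the ACC descends from $B$ to $A$; but routing through Proposition~\pref{cor:noeth} is shorter given what is already in hand.
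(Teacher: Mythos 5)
Your argument is correct and is essentially the intended one: the paper states this as an immediate corollary of the preceding proposition, and the natural route is precisely what you do — write $A$ as a quotient of $B=\Sym(U\langle 1\rangle)$, note that boundedness descends to quotients (by semisimplicity of $\cV$, so $\ell(A)\le\ell(B)=\dim U$), and then get noetherianity either via Proposition~\pref{cor:noeth} or via the direct observation that ACC descends along $B\twoheadrightarrow A$. Your alternative route (finitely generated $A$-modules are finitely generated $B$-modules, and $A$-submodules coincide with $B$-submodules, so noetherianity is inherited) is in fact the shorter and more standard argument, since it avoids re-invoking the bounded-tca machinery; but both are valid and neither has a gap.
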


\article
The great divide between tca's generated in degree 1 and those generated in higher degrees is the following:

\begin{proposition} \label{prop:rankvar}
The tca $\Sym(\bS_\lambda)$ is unbounded if $|\lambda| > 1$.
\end{proposition}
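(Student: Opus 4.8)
The plan is to show that for every $n\ge 1$ the object $\Sym^n(\bS_\lambda)$ of $\cV$ has a simple constituent $\bS_\sigma$ with $\ell(\sigma)\ge n$. Since $\Sym(\bS_\lambda)=\bigoplus_{n\ge 0}\Sym^n(\bS_\lambda)$, this immediately gives $\ell(\Sym(\bS_\lambda))=\infty$, i.e. $\Sym(\bS_\lambda)$ is unbounded. The idea is to smuggle a copy of the $n$-th exterior power — which occupies $n$ rows — into $\Sym^n(\bS_\lambda)$ via co-addition and the Cauchy identity; the hypothesis $|\lambda|>1$ is exactly what makes this possible.

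Here is the construction. Since $\lambda$ is nonempty with $|\lambda|\ge 2$, delete a corner box to obtain a \emph{nonempty} partition $\lambda^-$ with $|\lambda^-|=|\lambda|-1$ and $c^{\lambda}_{\lambda^-,(1)}=1$. By the co-addition formula \pref{ss:coadd}, $\bS_\lambda(V\oplus W)$ contains $\bS_{\lambda^-}(V)\otimes W$ as a direct summand; applying $\Sym^n$ and the binomial expansion $\Sym^n(X\oplus Y)=\bigoplus_{a+b=n}\Sym^a(X)\otimes\Sym^b(Y)$ (also \pref{ss:coadd}), the object $a^*(\Sym^n(\bS_\lambda))\in\cV^{\otimes 2}$, namely the functor $(V,W)\mapsto\Sym^n(\bS_\lambda(V\oplus W))$, contains $\Sym^n(\bS_{\lambda^-}(V)\otimes W)$ as a summand. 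Now invoke the Cauchy identity \pref{schur:cauchy} with the two vector spaces $\bS_{\lambda^-}(V)$ and $W$:
\[
\Sym^n(\bS_{\lambda^-}(V)\otimes W)=\bigoplus_{\rho\vdash n}(\bS_\rho\circ\bS_{\lambda^-})(V)\otimes\bS_\rho(W),
\]
and keep the $\rho=(1^n)$ summand $(\bw{n}\circ\bS_{\lambda^-})(V)\otimes\bS_{(1^n)}(W)$. The object $\bw{n}\circ\bS_{\lambda^-}$ of $\cV$ is nonzero: evaluating on $\bC^m$ gives $\bw{n}(\bS_{\lambda^-}(\bC^m))$, and $\dim\bS_{\lambda^-}(\bC^m)$ is a polynomial in $m$ of positive degree $|\lambda^-|$ by the hook-content formula \pref{art:hookcontent}, hence exceeds $n$ for $m$ large. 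So $\bw{n}\circ\bS_{\lambda^-}$ contains some $\bS_\gamma$, and therefore $a^*(\Sym^n(\bS_\lambda))$ contains $\bS_\gamma\uotimes\bS_{(1^n)}$.

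It remains to read off the conclusion. Writing $\Sym^n(\bS_\lambda)=\bigoplus_\sigma\bS_\sigma^{\oplus m_\sigma}$, additivity of $a^*$ together with the co-addition formula give $a^*(\Sym^n(\bS_\lambda))=\bigoplus_\sigma\bigoplus_{\alpha,\beta}(\bS_\alpha\uotimes\bS_\beta)^{\oplus m_\sigma c^{\sigma}_{\alpha,\beta}}$, so the occurrence of $\bS_\gamma\uotimes\bS_{(1^n)}$ forces $m_\sigma c^{\sigma}_{\gamma,(1^n)}>0$ for some $\sigma$; fix such a $\sigma$, so that $\bS_\sigma$ occurs in $\Sym^n(\bS_\lambda)$ and $c^{\sigma}_{\gamma,(1^n)}\ne 0$. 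By the Littlewood--Richardson consequences recalled in \pref{ss:lw}, $c^{\sigma}_{\gamma,(1^n)}\ne 0$ implies $(1^n)\subseteq\sigma$, i.e. $\ell(\sigma)\ge n$, as desired. Nothing here is subtle; the only point demanding care is the semisimple bookkeeping in this last step. (The argument degenerates precisely in the excluded case $|\lambda|=1$: then $\lambda^-=\emptyset$ and $\bw{n}\circ\bS_\emptyset=\bw{n}(\bC)=0$ for $n\ge 2$, consistent with $\Sym(\bC\langle 1\rangle)$ being bounded.)
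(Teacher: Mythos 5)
Your proof is correct, but it takes a genuinely different route from the paper's. The paper argues geometrically: supposing $\ell(\Sym(\bS_\lambda)) \le n$, every element of $\bS_\lambda(\bC^m)$ would lie in some $\bS_\lambda(W)$ with $\dim W = n$, so $\bS_\lambda(\bC^m)$ would coincide with the rank variety $X_\lambda^{\le n}$, the image of $\bS_\lambda\cR$ over $\Gr(n,\bC^m)$; a dimension count then shows $\dim X_\lambda^{\le n}$ grows only linearly in $m$, contradicting the degree-$|\lambda|>1$ polynomial growth of $\dim\bS_\lambda(\bC^m)$. Your argument instead stays entirely inside the combinatorics of $\cV$: stripping a corner box from $\lambda$ and passing through co-addition, the binomial theorem, and the Cauchy identity, you exhibit a constituent $\bS_\gamma\uotimes\bS_{(1^n)}$ in $a^*(\Sym^n(\bS_\lambda))$, and the Littlewood--Richardson constraint $(1^n)\subseteq\sigma$ then forces some constituent $\bS_\sigma$ of $\Sym^n(\bS_\lambda)$ to have $\ell(\sigma)\ge n$. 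Your route is more elementary (no algebraic geometry) and yields a slightly sharper conclusion, namely $\ell(\Sym^n(\bS_\lambda))\ge n$ for every $n$; the paper's geometric approach, on the other hand, introduces the rank varieties $X_\lambda^{\le n}$, which it then reuses in the discussion immediately after the proposition. Both proofs rest on the same input from \pref{art:hookcontent}, that the dimension of a Schur module of a nonempty partition grows as a polynomial of positive degree.
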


\begin{proof}
If this were not true, say that $\ell(\Sym(\bS_\lambda)) \le n$. Then every element of $\bS_\lambda(\bC^\infty)$ belongs to a subspace $\bS_\lambda(W)$ where $\dim W = n$. In more geometric terms, consider $\bS_\lambda(\bC^m)$ as an affine space. Consider the Grassmannian $\Gr(n,\bC^m)$ of $n$-dimensional subspaces of $\bC^m$ and let $\mc{R}$ be its rank $n$ tautological bundle. We have an inclusion $\bS_\lambda \cR \subset \bS_\lambda(\bC^m) \times \Gr(n,\bC^m)$. The image $X_\lambda^{\le n}$ of $\bS_\lambda \cR$ under the projection onto the first factor is precisely all those elements of $\bS_\lambda(\bC^m)$ that belong to a subspace $\bS_\lambda(W)$ where $\dim W = n$. So the dimension of $X_\lambda^{\le n}$ is bounded above by $\rank \bS_\lambda \mc{R} + \dim \Gr(n,\bC^m)$. The first term is a constant (depending only on $n$) and the second term is $n(m-n)$, a linear polynomial in $m$. But $\dim \bS_\lambda(\bC^m)$ is a polynomial in $m$ of degree $|\lambda|>1$ (see \pref{art:hookcontent}), so we get a contradiction.
\end{proof}

\begin{corollary}
Let $V$ be a finite length object of $\cV$.  Then $\Sym(V)$ is bounded if and only if $V$ is concentrated in degrees $\le 1$.
\end{corollary}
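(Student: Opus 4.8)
The plan is to treat the two implications separately, in each case reducing to results already established for the building-block symmetric algebras $\Sym(U\langle 1\rangle)$ and $\Sym(\bS_\lambda)$. The structural facts I will lean on throughout are: that $\cV$ is semisimple and carries the grading of \pref{ss:grade}, so a finite length object decomposes as $V = \bigoplus_{d\ge 0} V_d$ with all but finitely many $V_d$ zero and each $V_d$ a finite direct sum of simples $\bS_\lambda$ with $\vert\lambda\vert=d$; that $\Sym$ converts direct sums into tensor products, $\Sym(M\oplus N)\cong \Sym(M)\otimes\Sym(N)$ (the ``binomial theorem'' of \pref{ss:coadd}); and that $\ell(M\otimes N)=\ell(M)+\ell(N)$ for nonzero $M,N$ in $\cV$, which applies here with no degenerate case since a symmetric algebra always contains its unit in degree $0$.

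For the ``if'' direction, suppose $V$ is concentrated in degrees $\le 1$. Then $V = V_0 \oplus V_1$, where $V_0$ is a finite-dimensional vector space viewed as a degree $0$ object, and $V_1$, being a finite direct sum of copies of the unique degree-one simple $\bC\langle 1\rangle$, is isomorphic to $U\langle 1\rangle$ for some finite-dimensional $U$. Hence $\Sym(V) \cong \Sym(V_0)\otimes\Sym(V_1)$. The first factor lies entirely in degree $0$, so $\ell(\Sym(V_0))=0$, while $\ell(\Sym(U\langle 1\rangle))=\dim U<\infty$ by the Proposition asserting $\ell(\Sym(U\langle 1\rangle))=\dim U$ for finite-dimensional $U$. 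Therefore $\ell(\Sym(V))=\dim U<\infty$, so $\Sym(V)$ is bounded.

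For the ``only if'' direction I argue the contrapositive. If $V$ is not concentrated in degrees $\le 1$, then by semisimplicity some simple $\bS_\lambda$ with $\vert\lambda\vert\ge 2$ is a direct summand of $V$, say $V = \bS_\lambda \oplus W$. Then $\Sym(V)\cong\Sym(\bS_\lambda)\otimes\Sym(W)$, so $\ell(\Sym(V)) = \ell(\Sym(\bS_\lambda))+\ell(\Sym(W))$, which is infinite because $\ell(\Sym(\bS_\lambda))=\infty$ by Proposition~\pref{prop:rankvar}. Hence $\Sym(V)$ is unbounded.

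There is no serious obstacle: all the content sits in Proposition~\pref{prop:rankvar} and the computation $\ell(\Sym(U\langle 1\rangle))=\dim U$, and the corollary is the bookkeeping that combines them through additivity of $\ell$ under tensor products. The only points needing a word of care are the identification of a finite length object concentrated in degrees $\le 1$ as $V_0\oplus U\langle 1\rangle$ (using that $(1)$ is the only partition of size $1$) and verifying that additivity of $\ell$ is invoked only for nonzero objects.
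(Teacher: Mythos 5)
Your proof is correct and is the argument the paper clearly intends, though the paper leaves it unstated (the corollary follows the two propositions with no explicit proof). The decomposition $V = V_0 \oplus V_1 \oplus (\text{higher})$, the exponential identity $\Sym(M\oplus N)\cong\Sym(M)\otimes\Sym(N)$, and additivity of $\ell$ under tensor products are exactly the tools the placement of the corollary invites, and your care about excluding the degenerate $\ell(0)$ case and about identifying the degree-$1$ piece with $U\langle 1\rangle$ (since $(1)$ is the unique partition of size $1$) tidies up the one or two small points a reader might stumble on.
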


\article
Note however that there are non-polynomial tca's which are bounded but not generated in degree 1. For example, the coordinate rings of the varieties $X_\lambda^{\le n}$ in the proof of Proposition~\pref{prop:rankvar} are bounded tca's. These are the {\bf rank varieties} which were studied in \cite{porras} (see also \cite[Chapter 7]{weyman}). The basic examples of rank varieties are in the tca's $\Sym(U\langle 1\rangle)$, $\Sym(\Sym^2)$, and $\Sym(\bigwedge^2)$. These include the determinantal varieties mentioned in \pref{ss:detlvar}.

\subsection{Domains and prime ideals}
\label{ss:bdprime}

\begin{Lemma}
\label{ind:2}
Let $A$ be an object of $\mc{V}$ and let $x \in A(\bC^n)$.  Then the locus in $\Hom(\bC^n, \bC^m)$ consisting of maps $f$ such that $f(x)=0$ is a Zariski closed subset.
\end{Lemma}

\begin{proof}
This is clear.
\end{proof}

\begin{Lemma}
\label{ind:3}
Let $A$ be an object of $\mc{V}$ with $\ell(A) \le n$ and let $x \in A(\bC^m)$ be non-zero, with $m \ge n$.  Then there exists a map $f \colon \bC^m \to \bC^n$ such that $f(x) \ne 0$.
\end{Lemma}

\begin{proof}
One immediately reduces to the case where $A=\bS_{\lambda}$ with $\ell(\lambda) \le n$.  Let $V$ be the set of elements $x \in A(\bC^m)$ such that $f(x)=0$ for all $f \colon \bC^m \to \bC^n$.  Since each such $f$ induces a linear map $A(\bC^m) \to A(\bC^n)$, the space $V$ is a vector subspace of $A(\bC^m)$, and is clearly stable by $\GL(m)$.  As $A(\bC^m)$ is irreducible under $\GL(m)$ and $V$ is not all of $A(\bC^m)$, we have $V=0$, which completes the proof.
\end{proof}

\begin{Proposition}
\label{ind:4}
Let $A$ be a bounded tca and let $n \ge \ell(A)$.  Then $A$ is a domain if and only if $A(\bC^n)$ is.
\end{Proposition}

\begin{proof}
If $A$ is a domain then $A(\bC^n)$ is for any $n$.  Thus suppose that $A(\bC^n)$ is a domain, with $n \ge \ell(A)$.  We may as well suppose that $\ell(A)<\infty$, otherwise we are already done.  Let $m>n$.  Suppose that $A(\bC^m)$ is not a domain and let $x$ and $y$ in $A(\bC^m)$ satisfy $xy=0$.  Let $X$ (resp.\ $Y$) be the locus in $\Hom(\bC^m, \bC^n)$ consisting of maps $f$ such that $f(x)$ (resp.\ $f(y)$) is zero.  Then $X$ and $Y$ are Zariski closed subsets of $\Hom(\bC^m, \bC^n)$ by Lemma~\pref{ind:2}, and neither is the full space $\Hom(\bC^m, \bC^n)$ by Lemma~\pref{ind:3}.  It follows that $X \cup Y$ is not the full space.  We can thus pick $f \colon \bC^m \to \bC^n$ which does not belong to $X$ or $Y$, and so $x'=f(x)$ and $y'=f(y)$ are non-zero elements of $A(\bC^n)$.  Since the map $f \colon A(\bC^m) \to A(\bC^n)$ is a ring homomorphism, we have $x'y'=0$, and so $A(\bC^n)$ is not a domain.
\end{proof}

\begin{Proposition} \label{prop:primebijection}
Let $A$ be a bounded tca and let $n \ge \ell(A)$.  Then $\fp \mapsto \fp(\bC^n)$ gives an order-preserving bijection between prime ideals of $A$ and $\GL(n)$-stable prime ideals of $A(\bC^n)$.
\end{Proposition}

\begin{proof}
By Corollary~\pref{cor:boundedideals}, $\fp \mapsto \fp(\bC^n)$ is an order-preserving bijection between ideals of $A$ and $\GL(n)$-stable ideals of $A(\bC^n)$. Proposition~\pref{ind:4} says that $\fp$ is prime if and only if $\fp(\bC^n)$ is.
\end{proof}

\begin{Proposition}
\label{ind:5}
Let $A$ a noetherian bounded tca and let $n \ge \ell(A)$.  Then $\fp \mapsto \fp(\bC^n)$ gives a bijection between minimal primes of $A$ and minimal primes of $A(\bC^n)$.
\end{Proposition}

\begin{proof}
This follows from Proposition~\pref{prop:primebijection} and Proposition~\pref{minprime}.
\end{proof}

\begin{corollary}
A noetherian bounded tca has finitely many minimal primes.
\end{corollary}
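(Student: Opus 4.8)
The plan is to deduce this directly from the two preceding results, which do all the real work. First I would fix $n = \ell(A)$, which is a finite integer precisely because $A$ is bounded. By Proposition~\pref{ind:5}, the assignment $\fp \mapsto \fp(\bC^n)$ is a bijection between the minimal primes of $A$ and the minimal primes of the ring $A(\bC^n)$. Hence it suffices to show that $A(\bC^n)$ has only finitely many minimal primes.

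Next I would observe that $A(\bC^n)$ is a noetherian ring. Since $A$ is a noetherian bounded tca, the equivalence of conditions (a) and (d) in Proposition~\pref{cor:noeth} gives that $A(\bC^n)$ is noetherian (alternatively, one may run the implication (a) $\Rightarrow$ (c) of Proposition~\pref{finite:imps} to see that $A(\bC^n)$ is a finitely generated algebra over the noetherian ring $A_0$, hence noetherian). Now I would invoke the standard fact from commutative algebra that a noetherian commutative ring has only finitely many minimal primes — equivalently, that $\Spec$ of a noetherian ring has finitely many irreducible components. Combining this with the bijection of the previous paragraph, $A$ has finitely many minimal primes.

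The main obstacle is not in this corollary at all: it has been absorbed into Propositions~\pref{ind:4}, \pref{prop:primebijection} and \pref{ind:5} (the reduction to a finite-dimensional vector space via the boundedness principle of Corollary~\pref{cor:boundedtcasubspaces}) and into the noetherianity transfer of Proposition~\pref{cor:noeth}. Given those, the argument is essentially one line; the only points to be careful about are that one must take $n \ge \ell(A)$ with $\ell(A)$ finite — which is exactly the boundedness hypothesis — and that it is noetherianity of $A$ (equivalently, by Proposition~\pref{cor:noeth}, of each $A(\bC^n)$) that guarantees finiteness of the minimal primes of $A(\bC^n)$ in the last step.
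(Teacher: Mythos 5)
Your proof is correct and is exactly the argument the paper intends: reduce to $A(\bC^n)$ with $n \ge \ell(A)$ via the bijection of Proposition~\pref{ind:5}, use noetherianity of $A(\bC^n)$ (Proposition~\pref{cor:noeth} or Proposition~\pref{finite:imps}), and invoke the standard fact that a noetherian commutative ring has finitely many minimal primes. No gaps, and no difference in approach worth noting.
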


\begin{Proposition}
\label{ind:6}
The nilradical of a noetherian bounded tca is nilpotent.
\end{Proposition}

\begin{proof}
Let $n=\ell(A)$.  Since $\rad(A)(\bC^n)$ is the nilradical of $A(\bC^n)$, it is nilpotent, and so $\rad(A)(\bC^n)^k=0$ for some $k$.  Since $\rad(A)^k(\bC^n)=\rad(A)(\bC^n)^k$, we have $\rad(A)^k(\bC^n)=0$.  This implies $\rad(A)^k=0$, as $\ell(A)=n$.
\end{proof}

\begin{remark}
The above argument shows that in an arbitrary noetherian tca, given any $N$ there exists $n$ so that all partitions appearing in $\rad(A)^n$ have at least $N$ rows.
\end{remark}

\begin{Corollary}
\label{cor:bd-equiv}
For noetherian bounded tca's we have the following:
\begin{enumerate}[\rm (a)]
\item ``Domain'' is equivalent to ``weak domain.''
\item ``Prime'' is equivalent to ``weakly prime'' (for ideals).
\item ``Reduced'' is equivalent to ``weakly reduced.''
\item ``Nilpotent'' is equivalent to ``strongly nilpotent'' (for elements).
\end{enumerate}
\end{Corollary}

\begin{proof}
Let $A$ be a noetherian bounded tca.  Let $\fp$ be a weak prime of $A$.  Then $A/\fp$ is a weak domain which is still noetherian and bounded; by the above proposition its nilradical is nilpotent and so by Proposition~\pref{weak:rad} it is a domain, i.e., $\fp$ is prime.  This proves (b), and (a) follows by considering the zero ideal.  From (b) and Propositions~\pref{rad1} and~\pref{rad2}, we see that $\rad(A)=\srad(A)$.  This proves (c) and (d).
\end{proof}

\subsection{Noetherian induction}

\noarticle
We have the following version of a noetherian induction type result in the bounded case:

\begin{Proposition}
\label{ind:1}
Let $A$ be a noetherian bounded tca.  Let $\mc{P}$ be a property of ideals of $A$, i.e., a function assigning to each ideal $I$ of $A$ a boolean value $\mc{P}(I)$.  Suppose that
\begin{enumerate}[\rm (a)]
\item If $I$ is a prime ideal of $A$ and $\mc{P}(J)$ holds for all $J$ properly containing $I$ then $\mc{P}(I)$ holds.
\item If $\mc{P}(I)$ holds and $I \subset J$ then $\mc{P}(J)$ holds.
\item If $\mc{P}(I)$ and $\mc{P}(J)$ hold then $\mc{P}(IJ)$ holds.
\end{enumerate}
Then $\mc{P}(I)$ holds for all ideals $I$.
\end{Proposition}

\begin{Lemma}
\label{ind:8}
Let $A$ be a noetherian bounded tca and let $I$ be an ideal of $A$.  Then either $I$ is prime or there exist prime ideals $J_1, \ldots, J_n$ properly containing $I$ and $k \ge 1$ such that $(J_1 \cdots J_n)^k$ is contained in $I$.
\end{Lemma}

\begin{proof}
Suppose $I$ is not prime.  Write $\rad(I)=\bigcap_{i=1}^n J_i$ with $J_i$ prime, per Proposition~\pref{rad1}.  We have $I \subset J_i$ for each $i$, and since $I$ is not prime this containment is proper.  Let $\ol{J}_i$ denote the image of $J_i$ in $A/I$.  Then $\rad(A/I)=\bigcap_{i=1}^n \ol{J}_i$.  Thus $\bigcap_{i=1}^n \ol{J}_i$ is nilpotent by Proposition~\pref{ind:6}.  Since $\ol{J}_1 \cdots \ol{J}_n$ is contained in $\bigcap_{i=1}^n \ol{J}_i$, it too is nilpotent.  Letting $k$ be such that $(\ol{J}_1 \cdots \ol{J}_n)^k=0$, we see that $(J_1 \cdots J_n)^k \subset I$.
\end{proof}

\begin{Lemma}
\label{ind:9}
Let $\mc{S}$ be a poset satisfying the ascending chain condition and let $\mc{P}$ be a property of elements of $\mc{S}$, i.e., a function assigning to each $x \in \mc{S}$ a boolean value $\mc{P}(x)$.  Suppose that $\mc{P}$ satisfies the following condition:
\begin{itemize}
\item[($\ast$)] If $\mc{P}(y)$ holds for all $y>x$ then $\mc{P}(x)$ holds.
\end{itemize}
Then $\mc{P}(x)$ holds for all $x$.
\end{Lemma}

\begin{proof}
This is standard.
\end{proof}

\article
We now prove Proposition~\pref{ind:1}.  Let $\mc{S}$ be the set of ideals of $A$, given the structure of a poset via the relationship of inclusion.  This poset satisfies the ascending chain condition since $A$ is noetherian.  Let $I$ be an ideal of $A$ and suppose that $\mc{P}(J)$ holds for all $J$ strictly containing $I$.  If $I$ is prime then $\mc{P}(I)$ holds by (a).  Thus suppose that $I$ is not prime.  By Lemma~\pref{ind:8} we can find ideals $J_1, \ldots, J_n$ which properly contain $I$ such that $(J_1 \cdots J_n)^k$ is contained in $I$ for some $k \ge 1$.  Thus $\mc{P}(I)$ holds by (b) and (c).  The proposition now follows from Lemma~\pref{ind:9}.

\xsection{Existing applications} \label{sec:existingapps}

\subsection{Construction of pure resolutions} \label{ss:efw}

\noarticle
The constructions in this section appeared in \cite{efw} and were motivated by the Boij--S\"oderberg conjectures \cite{boijsoderberg}, which were solved in \cite{es:bs}. The specifics of the conjectures will not be discussed here, but we refer the reader to the survey articles \cite{es:survey} and \cite{floystad}. The constructions also appear in a different language in \cite{olver}.

First consider the polynomial ring $A = \bC[x_1,\dots,x_n]$, which we consider as a graded ring with $\deg(x_i) = 1$. Given a finitely generated graded $A$-module $M$, we note that the Tor modules $\Tor_i^A(M,\bC)$ are naturally graded. These can also be interpreted as the generators for the $i$th term of a minimal $A$-free resolution for $M$. We say that $M$ has a {\bf pure resolution} of type $(d_0, d_1, \dots)$ if for each $i$, we have that $\Tor_i^A(M,\bC)_j \ne 0$ if and only if $j = d_i$. By minimality, a necessary condition for such a sequence to be realized is that $d_0 < d_1 < \cdots$.

An equivariant construction for pure resolutions for any degree sequence $(d_0, d_1, \dots, d_n)$ was given in \cite{efw}. So now we write $A = \Sym(V)$ for an $n$-dimensional vector space $V$, which carries the action of $G = \GL(V)$.

Let $\alpha$ be a partition with $\ell(\alpha) \le n$ and let $\beta$ be a partition with $\beta_1 > \alpha_1$ and $\beta_i = \alpha_i$ for $i\ge 2$. Set $e_1 = \beta_1 - \alpha_1$ and $e_i = \alpha_{i-1} - \alpha_i + 1$ for all $i \ge 2$, and define $d_i = e_1 + e_2 + \cdots + e_i$ (we set $d_0 = 0$). Also define the partitions
\begin{align}
\alpha(d,i) = (\alpha_1 + e_1, \alpha_2 + e_2, \dots, \alpha_i + e_i, \alpha_{i+1}, \dots, \alpha_n)
\end{align}
with the convention that $\alpha(d,0) = \alpha$. We define graded free $A$-modules by
\begin{align}
\mb{F}_i = \bS_{\alpha(d,i)}(V) \otimes A(-d_i)
\end{align}
for $i=0,1,\dots,n$.

Note that $\alpha(d,i) / \alpha(d,i-1) \in \HS_{e_i}$ for all $i=1,\dots,n$, and so from \pref{art:gln-pieri}, we see that we have a unique up to scalar map
\begin{align}
\bS_{\alpha(d,i)}(V) \to \bS_{\alpha(d,i-1)}(V) \otimes \Sym^{e_i}(V)
\end{align}
which can be extended to an $A$-linear map
\begin{align}
d_i \colon \mb{F}_i \to \mb{F}_{i-1}.
\end{align}
It is also clear from Pieri's rule \pref{art:gln-pieri} that no $G$-equivariant maps $\bS_{\alpha(d,i)}(V) \to \mb{F}_{i-2}$ exist, so $\mb{F}_\bullet$ is a complex. 

One of the main results of \cite{efw} is that $\mr{H}_i(\mb{F}_\bullet) = 0$ for $i>0$, and hence $\mb{F}_\bullet$ is a pure resolution of $M = \mr{H}_0(\mb{F}_\bullet)$ (this also follows from \cite[Theorem 8.11]{olver}). This can also be proved using the following lemma: if $\bS_\lambda(V) \subset \bS_{\alpha(d,i)}(V) \otimes A$ and $\bS_\lambda(V) \subset \bS_{\alpha(d,i-1)}(V) \otimes A$, then its image under the differential $d_i$ is nonzero, and hence $d_i$ maps it isomorphically onto its image by irreducibility. This lemma appears in \cite[\S 8]{olver} and \cite[Lemma 1.6]{sw}. These differentials can be explicitly calculated using the {\tt Macaulay 2} package {\tt PieriMaps} written by the first author \cite{pierimaps}.

Although we started with a pair of partitions $\alpha, \beta$ and produced the degree sequence, it is easy to see that one can reverse the construction, and construct partitions $\alpha, \beta$ that give any degree sequence $(d_0 < d_1 < \cdots)$.

Now we point out that the dimension of $V$ was not important in the above discussion. In fact, the construction above makes since when we replace $V$ with $\bC^\infty$, so we get pure resolutions over the tca $\Sym(\bC\langle 1\rangle)$. Note that the complex $\mb{F}_\bullet$ becomes eventually linear (cf. Theorem~\pref{thm:fglinearstrands}).

\subsection{FI-modules} \label{ss:fimod}

\noarticle
FI-modules were introduced in \cite{fimodules} and are equivalent to modules over the tca $\Sym(\bC\langle 1 \rangle)$.  It was observed that many natural sequences of $S_n$-representations, such as the $i$th cohomology group of the configuration space of $n$ points of a fixed manifold $X$, carry the structure of a finitely generated $\Sym(\bC\langle 1 \rangle)$-module.  Therefore, algebraic results about $\Sym(\bC\langle 1 \rangle)$-modules yield (typically new) information about these sequences of representations.  For instance, the dimension of the $n$th graded piece of a finitely generated $\Sym(\bC\langle 1 \rangle)$-module is a polynomial in $n$, at least for $n \gg 0$, and so this property transfers to all examples.

\subsection{$\Delta$-modules} \label{ss:deltamod}

\noarticle
$\Delta$-modules were introduced in \cite{snowden} with the purpose of studying syzygies of the Segre embedding, and related embeddings.  We recall the definition.  A {\bf $\Delta$-module} consists of the following:
\begin{itemize}
\item For each non-negative integer $n$, a polynomial functor $F_n \colon \Vec^n \to \Vec$.
\item An $S_n$-equivariant structure on $F_n$.
\item A natural transformation
\begin{displaymath}
F_n(V_1, \ldots, V_{n-1}, V_{n} \otimes V_{n+1}) \to F_{n+1}(V_1, \ldots, V_{n+1}).
\end{displaymath}
\end{itemize}
The data are required to satisfy certain compatibility conditions.  The notion of a morphism of $\Delta$-modules is evident, and the resulting category of $\Delta$-modules is abelian.

The main examples of $\Delta$-modules are given by syzygies of Segre embeddings.  To be precise, fix $p \ge 0$.  Let $F_n(V_1, \ldots, V_n)$ be the $p$th syzygy module of the Segre embedding
\begin{displaymath}
\bP(V_1) \times \cdots \times \bP(V_n) \to \bP(V_1 \otimes \cdots \otimes V_n).
\end{displaymath}
Then the sequence $\{F_n\}$ constitutes a $\Delta$-module.  The map from $F_n$ to $F_{n+1}$ comes from an obvious commutative triangle of Segre embeddings.

The main results of \cite{snowden} state that finitely generated $\Delta$-modules are noetherian and have rational Hilbert series.  (Actually, these results are only proved for a certain class of $\Delta$-modules, the ``small'' ones.)  These results apply in particular to the $\Delta$-modules coming from syzygies of Segre embeddings, and show, in a sense, that there is only a finite amount of data in the $p$-syzygies.

The connection with twisted commutative algebras is this:  if $F$ is a small $\Delta$-module then, for fixed $V$, the sequence $\{F_n(V, \ldots, V)\}$ admits the structure of a finitely generated module over a finitely generated bounded tca.  Furthermore, the formation of this module often does not lose very much information.  The main results about $\Delta$-modules are deduced from corresponding results about modules over tca's.  As far as we are aware, this is the first application of the theory of tca's more general than $\Sym(\bC\langle 1 \rangle)$.

\subsection{Invariant theory}

\noarticle
Let $X$ be a projective variety and let $\mc{L}$ be a line bundle on $X$.  For an integer $n$, let $B_n$ denote the global sections of $\mc{L}^{\boxtimes n}$ on $X^n$.  Then $B_n$ has an action of $S_n$ and outer multiplication of sections defines a map $B_n \otimes B_m \to B_{n+m}$.  Thus $B$ forms a twisted commutative algebra.  If a group $G$ acts on $X$ and $\mc{L}$ is $G$-equivariant then $B$ carries an action of $G$, and $A_n=B_n^G$ can be identified with the global sections of the bundle on the GIT quotient $X^n/\!\!/G$ induced by $\mc{L}^{\boxtimes n}$.  The algebras $A$ gotten in this manner are bounded and finitely generated, and thus noetherian, though typically not generated in degree 1.

This point of view on GIT quotients can be very useful.  For instance, let $A$ be the tca coming from the above set-up with $X=\bP^1$, $\mc{L}=\mc{O}(k)$ and $G=\SL(2)$.  Elements of $A_n$ can be represented as regular graphs on $n$ vertices of degree $k$, modulo certain relations called the Pl\"ucker relations; see \cite[\S 2]{hmsv}.  The fact that $A$ is finitely generated can be interpreted combinatorially as follows:  there is an integer $r$ such that any regular degree $k$ graph on $>r$ vertices can be written as a sum of disconnected graphs using the Pl\"ucker relations.  This was proven ``by hand'' in \cite[\S 6]{hmsv} for certain small values of $k$.  The proofs given there are very combinatorial in nature, and did not give a conceptual explanation of this phenomenon; in particular, it was not clear if it would continue for higher values of $k$.  The perspective offered by tca's shows that it does, even for general $X$.

Let us give one more instance where the tca point of view is useful, again from \cite{hmsv}.  The main theorem of loc.\ cit.\ states that the defining equations of $(\bP^1)^n/\!\!/\SL(2)$ are quadratic, except when $n=6$ where there is a single cubic relation.  Furthermore, a generating set of quadratic relations are given and it is evident that in some sense they all come from the $n=8$ case.  This can be made precise using tca's:  the quadratic part of the ideal forms a module over a tca, and the theorem from loc.\ cit.\ simply states that it is generated in degree 8.

\section{Announcement of new results} \label{sec:announce}

\subsection{Hilbert series}

\article
The results from this section are from \cite{hilbert}, for the most part.  Let $M$ be a graded-finite object of $\cV$, taken in the sequence model.  We define its {\bf Hilbert series} by
\begin{align}
H_M(t)=\sum_{n \ge 0} \dim(M_n) \frac{t^n}{n!}.
\end{align}
We then have the following rationality theorem, proved in \cite[Thm.~3.1]{snowden}:  if $M$ is finitely generated over a tca finitely generated in degree 1 then $H_M(t)$ is a polynomial in $t$ and $e^t$.

\article
We now discuss two generalizations of this theorem.  The first concerns a rationality result for a modification of the Hilbert series.  To define it, we must first introduce some notation.  Let $\lambda$ be a partition.
\begin{itemize}
\item We write $c_{\lambda}$ for the conjugacy class in $S_{\vert \lambda \vert}$ corresponding to $\lambda$ (see \pref{ss:conj}).
\item We write $t^{\lambda}$ for $t_1^{m_1(\lambda)} t_2^{m_2(\lambda)} \cdots$, where $m_i(\lambda)$ denotes the number of times $i$ appears in $\lambda$.
\item We write $\lambda!$ for $m_1(\lambda)! m_2(\lambda)! \cdots$.
\end{itemize}
For example, if $\lambda=(2,1,1,1)$ then $c_{\lambda}$ is the conjugacy class of transpositions in $S_5$, $t^{\lambda}$ is $t_1^3 t_2$ and $\lambda!=3! \cdot 1!=6$.

Let $M$ be a graded-finite object of $\cV$.  We define its {\bf enhanced Hilbert series} by
\begin{align}
\wt{H}_M(t)=\sum_{\lambda} \trace(c_{\lambda} \vert M) \frac{t^{\lambda}}{\lambda!}.
\end{align}
The isomorphism class of $M$ is completely determined by $\wt{H}_M$.  The enhanced Hilbert series therefore contains much more information than the usual Hilbert series.  In fact, the usual Hilbert series is recovered easily from the enhanced Hilbert series by putting $t_i=0$ for $i>1$.  The enhanced Hilbert series is multiplicative:
\begin{align}
\wt{H}_{M \otimes N}=\wt{H}_M \wt{H}_N
\end{align}
(see, for example, \cite[Proposition 7.18.2]{stanley}).  It follows that the map
\begin{align}
K(\cV_{\gfin}) \otimes \bQ \to \bQ \lbb t_i \rbb, \qquad M \mapsto \wt{H}_M
\end{align}
is an isomorphism of rings.

\article
To state our main theorem on these series, we introduce a bit more notation.  For $n \ge 0$, put
\begin{align}
T_n=\sum_{i \ge 1} i(i-1) \cdots (i-n+1) t_i.
\end{align}

\begin{theorem}
Let $M$ be a finitely generated module over a tca finitely generated in degree $1$.  Then $\wt{H}_M$ belongs to $\bQ[t_i, T_j, \exp(T_0)]_{i,j \ge 1}$.
\end{theorem}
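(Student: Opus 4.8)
\emph{Sketch of the argument.} The plan is to reduce to the universal tca $A=\Sym(U\langle 1\rangle)$ with $U$ finite dimensional, compute $\widetilde{H}_A$, and then analyze the (infinite) minimal free resolution of $M$ over $A$, the crucial input being the behaviour of that resolution in the limit of evaluating on $\bC^N$.

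Since the tca is finitely generated in degree $1$ it is a quotient of $A=\Sym(U\langle 1\rangle)$ with $\dim U=d<\infty$, and a finitely generated module over the quotient is a finitely generated $A$-module; so I may assume the tca is $A$. I will use that $\widetilde{H}$ is additive on short exact sequences (trace is additive) and multiplicative for $\otimes$, hence factors through $K(\cV_{\gfin})\otimes\bQ$, and that, writing $\mathrm{ch}(V)=\sum_{\lambda}[\bM_{\lambda}:V]\,s_{\lambda}$ for the image of $V$ under the characteristic map, one has $\widetilde{H}_V=\phi(\mathrm{ch}(V))$ where $\phi$ is the ring homomorphism $p_i\mapsto i\,t_i$ on symmetric functions. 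Thus $\widetilde{H}_V\in\bQ[t_i]$ for $V$ of finite length, $\phi\big(\prod_{j}(1-x_j)\big)=\exp(-T_0)$, and $\phi(\tau_j)=T_j$ for $\tau_j:=\sum_{i\ge1}(i-1)(i-2)\cdots(i-j+1)\,p_i$. By the Cauchy decomposition \eqref{eq:multipol-1}, $\mathrm{ch}(A)=\sum_{\ell(\lambda)\le d}s_\lambda(1^d)\,s_\lambda$, which equals $\prod_j(1-x_j)^{-d}$ by the Cauchy identity, so $\widetilde{H}_A=\exp(d\,T_0)$. Finally, $A$ is noetherian (it is bounded and each $A(\bC^n)$ is a polynomial ring, so Proposition~\pref{cor:noeth} applies) and every finitely generated $A$-module is bounded, in particular graded-finite, so $\widetilde{H}_M$ is defined (Proposition~\pref{prop:boundedtcasubmodules}).

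Let $F_\bullet\to M$ be the minimal free resolution over $A$, $F_i=A\otimes V_i$ with $V_i=\Tor^A_i(M,\bC)$ of finite length. Then $[M]=\sum_i(-1)^i[F_i]$ in the degree-completed Grothendieck group, so $\mathrm{ch}(M)=\mathrm{ch}(A)\cdot Q$ with $Q:=\sum_{i\ge0}(-1)^i\,\mathrm{ch}(V_i)$; minimality makes the generating degrees of the $V_i$ strictly increase, so the sum converges and $\widetilde{H}_M=\widetilde{H}_A\cdot\phi(Q)$. To get a handle on $Q$ I evaluate $F_\bullet$ on $\bC^N$ for $N\ge\ell(M)$ (which loses nothing about $M$, by Proposition~\pref{prop:truncation}): then $F_\bullet(\bC^N)$ is a \emph{finite} $\GL(N)$-equivariant free resolution of the finitely generated graded $A(\bC^N)=\Sym(\bC^N\otimes U)$-module $M(\bC^N)$, finite because $A(\bC^N)$ is a polynomial ring of global dimension $dN$, whence $V_i(\bC^N)=0$ for $i>dN$. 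Equivariant Hilbert--Serre then writes the $\GL(N)$-character of $M(\bC^N)$ as $P_N\cdot\prod_{j=1}^N(1-x_j)^{-d}$ with $P_N=\sum_i(-1)^i\,\mathrm{ch}_{\GL(N)}V_i(\bC^N)$ a symmetric polynomial; the $P_N$ are compatible under $x_N\mapsto 0$, and their limit recovers $Q$.

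The one genuinely hard point is to deduce $\widetilde{H}_M=\widetilde{H}_A\cdot\phi(Q)\in R:=\bQ[t_i,T_j,\exp(T_0)]_{i,j\ge1}$, i.e.\ that $\phi(Q)$ is a $\bQ[t_i,T_j]$-combination of $\exp(-m\,T_0)$ with $0\le m\le d$. This is where the finiteness properties of these resolutions enter: by the eventual-linearity theorem for minimal free resolutions over tca's generated in degree $1$ (\cite{koszul}; cf.\ Theorem~\pref{thm:fglinearstrands}), for $i\gg0$ the $V_i$ organize into finitely many linear strands, each a finitely generated module over the Koszul-dual algebra --- equivalently, the Betti numbers of $M(\bC^N)$ stabilize as $N\to\infty$. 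This stabilization is exactly what allows the tail $\sum_{i\gg0}(-1)^i\,\mathrm{ch}(V_i)$ to be summed in closed form, and because the homological sign $(-1)^i$ turns exterior characters into $\prod_j(1-x_j)$-factors --- already visible in $\sum_i(-1)^i\mathrm{ch}\,\bigwedge^i(U\langle 1\rangle)=\prod_j(1-x_j)^d$, the Koszul resolution of $\bC$ --- one expects $Q\in\sum_{m=0}^{d}\prod_j(1-x_j)^m\cdot\bQ[p_i,\tau_j]$; applying $\phi$ and multiplying by $\widetilde{H}_A=\exp(d\,T_0)$ then lands in $R$. The base cases already exhibit the shape, and the origin of the generators $T_j$: $M=\bC$ gives $Q=\prod_j(1-x_j)^d$ and $\widetilde{H}_M=1$; $M=A\otimes\bC\langle n\rangle$ gives $Q=p_1^n$ and $\widetilde{H}_M=t_1^n\exp(d\,T_0)$; and the quotient of $\Sym(\bC^2\langle1\rangle)$ by its $2\times 2$ minors gives $Q=\prod_j(1-x_j)\big(1+\sum_i p_i\big)$, hence $\widetilde{H}_M=\exp(T_0)\,(1+T_1)$. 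Everything else is routine manipulation with power sums; the main obstacle is precisely the control of the tail of the infinite resolution, which is the content of the eventual-linearity theorem of \cite{koszul}.
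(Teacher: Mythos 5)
The paper does not actually contain a proof of this theorem: it is stated in \S\ref{sec:announce} as an announcement from the then-in-preparation reference \cite{hilbert}, so there is no ``paper's own proof'' to compare against. Judging your sketch on its own terms, the setup is sound and the worked examples all check out: the reduction to $A=\Sym(U\langle 1\rangle)$, the identity $\wt{H}_V=\phi(\mathrm{ch}(V))$ for $\phi(p_i)=i\,t_i$, the computation $\wt{H}_A=\exp(dT_0)$ via Cauchy, the relation $\wt{H}_M=\wt{H}_A\cdot\phi(Q)$ with $Q=\sum_i(-1)^i\mathrm{ch}\,\Tor_i^A(M,\bC)$ (well defined because the generating degrees of a minimal resolution grow), and the base cases are all correct.

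The gap is the step you flag yourself, and I think it is genuine rather than ``routine.'' The appeal to Theorem~\pref{thm:fglinearstrands} does not close it, and as written the argument is circular. That theorem says the strands $\mc{T}_n(M)^\vee$ are finitely generated over $\bw{}(U^*\langle 1\rangle)$; after transposing, each $\mc{F}_n(M)$ is a finitely generated module over $A'=\Sym(U^*\langle 1\rangle)$, a tca of exactly the same type as $A$. Controlling the alternating character sum $\sum_j(-1)^j\mathrm{ch}\big((\mc{T}_n(M))_j\big)$ therefore \emph{is} the problem of bounding $\wt{H}$ of a finitely generated module over a degree-one tca --- the original problem, with no decrease in complexity, since Koszul duality just swaps the two sides. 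If instead you resolve a strand over the exterior algebra and push to the Grothendieck group, you only learn that its Euler characteristic lies in $\prod_j(1-x_j)^d\cdot\bQ\lbb p_i\rbb$, a formal power series, not the polynomial closed form in $\bQ[p_i,\tau_j]$ you need. So some independent input is required to turn ``one expects $Q\in\sum_{m=0}^d\prod_j(1-x_j)^m\cdot\bQ[p_i,\tau_j]$'' into a proof. The paper's own surrounding material (the system-of-parameters theorem about $\partial_{V_1}\cdots\partial_{V_n}(M)$ being acyclic, and \cite[Thm.~3.1]{snowden} for the plain Hilbert series) points at a non-circular route: directly exhibit enough differential-operator annihilators of $\wt{H}_M$, using $\bD\leftrightarrow\partial/\partial t_1$ and its refinements, rather than detouring through $\Tor$. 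That is the kind of input your sketch is missing; without it, your formula $\wt{H}_M=\exp(dT_0)\,\phi(Q)$ is a correct rewriting but not yet a proof of the membership claim.
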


\article
We now describe our second result on Hilbert series.  The theorem that $H_M$ belongs to $\bQ[t,e^t]$ can be stated equivalently using differential equations:  it amounts to the existence of a polynomial $p$, whose roots are non-negative integers, such that $p(d/dt) H_M=0$.  Now, the operation $d/dt$ on Hilbert series is induced by the Schur derivative (see \pref{ss:deriv}) on $\cV$; that is, we have
\begin{align}
H_{\bD{M}}=\frac{d}{dt} H_M.
\end{align}
This follows immediately from the description of each in the sequence model.  Suppose now that $M$ is an $A$-module, and that $V$ is a subspace of $A_1$ (or, more generally, any space mapping to $A_1$).  We have a multiplication map
\begin{align}
A_1 \otimes M_n \to M_{n+1},
\end{align}
which, when restricted to $V$, yields a map
\begin{align}
V \otimes M \to \bD{M}.
\end{align}
We define $\partial_V(M)$ to be the complex $[V \otimes M \to \bD{M}]$.  More generally, for a complex of $A$-modules $M$ the above process gives a map of complexes $V \otimes M \to \bD{M}$, and we define $\partial_V(M)$ to be the cone on this complex.  The operation $\partial_V$ lifts the operation $d/dt-\dim{V}$ on Hilbert series.  It thus follows that we can find spaces $V_1, \ldots, V_n$ such that $\partial_{V_1} \cdots \partial_{V_n}(M)$ has Hilbert series 0.  The obvious question then is whether one can choose the $V_i$ so that $M$ itself is annihilated; this is answered affirmatively by our result:

\begin{theorem}
Let $A$ be a tca finitely generated in degree $1$ and let $M$ be a finitely generated $A$-module.  Then there exist subspaces $V_1, \ldots, V_n$ of $A_1$ such that the complex $\partial_{V_1} \cdots \partial_{V_n}(M)$ is acyclic.
\end{theorem}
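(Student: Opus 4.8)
The plan is to induct on a numerical invariant of $M$ read off from its Hilbert series, exploiting the two facts recalled above: $\partial_V$ lifts the operator $\frac{d}{dt}-\dim V$ on Hilbert series, and $H_M\in\bQ[t,e^t]$. First I would reduce to the universal case. Since $A$ is finitely generated in degree $1$ there is a surjection $\Sym(A_1\langle 1\rangle)\to A$, the object $A_1$ is just a finite dimensional vector space, and for $V\subseteq A_1$ the operation $\partial_V$ is the same whether $M$ is viewed over $A$ or over $\Sym(A_1\langle 1\rangle)$; so I may assume $A=\Sym(U\langle 1\rangle)$ with $U=A_1$ finite dimensional. Write $H_M=\sum_{j=0}^{N}p_j(t)e^{jt}$ with $p_N\ne 0$, setting $N=0$ if $M$ has finite length. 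The point that makes the induction possible is that $N\le\dim U$: indeed $M$ is a quotient of $A\otimes W$ for some finite length $W$, and $H_{A\otimes W}=e^{(\dim U)t}H_W$, so the exponential growth rate of $\dim M_n$ is at most $\dim U$. I induct on the pair $(N,\deg p_N)$ in the lexicographic (hence well founded) order on $\bZ_{\ge 0}^2$.

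In the base case $N=0$, the module $M$ has finite length as an object of $\cV$, say supported in degrees $\le d$; since $\partial_0=\bD$ is the derivative/shift functor, $\partial_0^{\,d+1}(M)=\bD^{d+1}M=0$ is acyclic. For the inductive step, pick a subspace $V\subseteq U$ with $\dim V=N$ (possible as $N\le\dim U$). Then $\partial_V(M)$ is the two term complex $[\,V\otimes M\xrightarrow{\mu_V}\bD M\,]$ with $\bD M$ in homological degree $0$, so its only homology is $H_0=\coker\mu_V$ and $H_1=\ker\mu_V$, and its Euler characteristic on Hilbert series is $H_{\bD M}-H_{V\otimes M}=(\frac{d}{dt}-N)H_M$. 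A direct computation shows $\frac{d}{dt}-N$ replaces the coefficient $p_N$ by $p_N'$ and leaves the degrees of $p_j$ for $j<N$ unchanged, so $(\frac{d}{dt}-N)H_M$ has strictly smaller invariant than $M$ (reading the $e^{Nt}$ term as absent when $p_N$ is constant). The heart of the argument is the following claim: \emph{for a generic choice of $V$ of dimension $N$, the kernel $\ker\mu_V$ has finite length.} Granting this, $H_{\ker\mu_V}$ is a polynomial, so $H_{\coker\mu_V}=H_{\ker\mu_V}+(\frac{d}{dt}-N)H_M$ still has invariant strictly smaller than that of $M$; thus both homology modules of $\partial_V(M)$ are under control, $\ker\mu_V$ by the base case and $\coker\mu_V$ by the inductive hypothesis.

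To assemble these, I would use two formal facts. First, $\partial_W$ is exact on complexes: it is the cone of the natural transformation $W\otimes(-)\to\bD(-)$ between exact functors, so it carries short exact sequences of complexes to short exact sequences of complexes, carries acyclic complexes to acyclic complexes (hence preserves quasi-isomorphisms), and gives a long exact sequence in homology. Second, $\partial_W$ commutes with the shift $[1]$. Now let $P$ be a composite of operators $\partial_{W_i}$ with $P(\coker\mu_V)$ acyclic (inductive hypothesis), and let $Q=\partial_0^{\,e+1}$ with $e$ chosen so that $Q(\ker\mu_V)$ is acyclic (base case). Filtering the complex $\partial_V(M)$ in two steps, with associated graded pieces quasi-isomorphic to $H_1(\partial_V M)[1]$ and $H_0(\partial_V M)$, and applying exactness of $\partial_{W}$ together with the two facts above, one gets that $Q\circ P$ applied to $\partial_V(M)$ is acyclic; hence $Q\circ P\circ\partial_V$ applied to $M$ is acyclic, exhibiting the required sequence $V_1,\dots,V_n$ (namely $V$, the subspaces appearing in $P$, and the zero subspace repeated $e+1$ times).

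The main obstacle is the claim that a generic $V$ with $\dim V=N$ (the exponential growth rate of $M$) has $\ker\mu_V$ of finite length. That $\dim V=N$ is the only workable choice is a growth comparison: $\dim(V\otimes M)_n=\dim V\cdot\dim M_n$ while $\dim(\bD M)_n=\dim M_{n+1}\sim N\dim M_n$, so $\mu_V$ can have finite length kernel only if $\dim V\le N$ and finite length cokernel only if $\dim V\ge N$. To prove the claim I would pass to the truncated category $\cV^{\le n}$ for $n\gg 0$ — legitimate by Proposition~\pref{prop:truncation} and Proposition~\pref{prop:boundedtcasubmodules}, since a finitely generated $M$ is bounded — where it becomes a statement about a finitely generated $\GL(n)$-equivariant graded module over the polynomial ring $A(\bC^n)=\Sym(U\otimes\bC^n)$: one shows by a Bertini/prime-avoidance argument that for generic $V$ multiplication by $V$ is injective modulo a submodule supported in a bounded range of degrees. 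Equivalently, after discarding a finite length submodule (which exists by noetherianity of $\Sym(U\langle 1\rangle)$, see the discussion following Proposition~\pref{cor:noeth}), $M$ becomes torsion-free over the subalgebra generated by a generic $V$, and a dimension count forces the discarded submodule to have finite length precisely when $\dim V=N$. Making that last dimension count rigorous — translating the exponential growth rate of the tca Hilbert series into Krull-dimensional information about the evaluations $M(\bC^n)$ — is the step I expect to require the most care.
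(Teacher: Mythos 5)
The paper itself gives no proof of this theorem: it is stated as an announcement of a result from the in-preparation paper \cite{hilbert}, so there is no argument in the text to compare your proposal against. Assessed on its own terms, your inductive framework --- lexicographic induction on $(N,\deg p_N)$ read off from $H_M=\sum_j p_j(t)e^{jt}$, base case for finite length modules via $\partial_0=\bD$, and assembly of the two-term complex $\partial_V(M)$ using exactness and commutativity of the cone functors $\partial_W$ --- is a reasonable guess at the intended shape; the paper's own commentary relating the theorem to systems of parameters points in exactly this direction, and the reduction to $A=\Sym(U\langle 1\rangle)$ and the bound $N\le\dim U$ are both correct.

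However, the central claim --- that for a generic $V\subseteq U$ with $\dim V=N$ the kernel $\ker\mu_V$ has finite length --- is false, and this is not a detail left to be checked but a step that fails. Take $A=\Sym(U\langle 1\rangle)$ with $\dim U=2$, a basis $e_1,e_2$ of $U=A_1$, and $M=A\oplus A/\langle e_1\rangle$. Then $H_M=e^{2t}+e^{t}$, so $N=2=\dim U$ and the only choice is $V=U$: genericity offers nothing. On the free summand $\mu_U\colon U\otimes A\to\bD A$ is an isomorphism, but on $A/\langle e_1\rangle$ the element $e_1$ acts as zero, so $\ker\mu_U\supseteq \bC e_1\otimes(A/\langle e_1\rangle)$, which has Hilbert series $e^{t}$ and hence infinite length. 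The Bertini/prime-avoidance route you sketch is aimed at a statement that is not true, and in the boundary case $N=\dim U$ there is no generic choice available at all. In the example $\ker\mu_U$ does happen to drop to $N=1$, so one could hope to repair the argument by feeding the kernel into the inductive hypothesis rather than into the base case; but that requires a different and strictly stronger assertion --- that for suitable $V$ \emph{both} $\ker\mu_V$ and $\coker\mu_V$ drop strictly in the invariant $(N,\deg p_N)$ --- and this does not follow merely from $\ker\mu_V\subseteq V\otimes M$, since a submodule can a priori share the leading pair of the ambient module. Establishing that strict drop (or finding the correct replacement invariant) is the real content that your proposal is missing.
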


This result can be viewed as providing a kind of system of parameters for modules over tca's.  To explain, consider the graded polynomial ring $A=\Sym(V)$.  For an $A$-module $M$ supported in non-negative degrees, let $\bD(M)$ be the $A$-module which is $M_{n+1}$ in degree $n \ge 0$, and 0 in negative degrees.  We have a multiplication map $A_1 \otimes M_n \to M_{n+1}$, and so given $V \subset A_1$ we can define $\partial_V(M)$ to be the complex $[V \otimes M \to \bD{M}]$.  The existence of a system of parameters for $M$ is equivalent to the existence of a space $V$ such that $\partial_V(M)$ has finite length homology.  Note that an $A$-module being finite length is equivalent to it being annihilated by some power of $\bD=\partial_0$, and so a system of parameters for $M$ gives a differential operator annihilating $M$ and vice versa.

\begin{Remark}
There is a common generalization of the above two theorems on Hilbert series.  We have not established this statement yet, but hope to soon.
\end{Remark}

\subsection{Finiteness properties of resolutions} \label{ss:finiteres}

\article
The results from this section are from \cite{koszul}.  The tca $A=\Sym(U\langle 1 \rangle)$ has infinite global dimension:  for example, the Koszul resolution of the simple module $\bC$ has infinite length.  Nonetheless, we have shown that the resolution of a finitely generated $A$-module has only a finite amount of data in it.  To state this result precisely, let $M$ be a finitely generated $A$-module and put
\begin{align}
\mc{T}_n(M)=\bigoplus_{p \ge 0} \Tor_p^A(M, \bC)_{p+n}.
\end{align}
The terms of the resolution of $M$ are recoverable from the $\mc{T}_n(M)$.  Standard properties of the Koszul complex show that $\mc{T}_n(M)$ is a comodule over $\bw{}(U\langle 1 \rangle)$.  Therefore $\mc{T}_n(M)^{\vee}$ is a module over $\bw{}(U^*\langle 1 \rangle)$.  Our main result is then:

\begin{theorem} \label{thm:fglinearstrands}
In the above situation, $\mc{T}_n(M)^{\vee}$ is finitely generated over $\bw{}(U^*\langle 1 \rangle)$ for all $n$ and non-zero for only finitely many $n$.
\end{theorem}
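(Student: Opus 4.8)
The plan is to repackage the assertion as a single finite-generation statement. Over $A=\Sym(U\langle 1\rangle)$ the complex $A\otimes\bw{\bullet}(U\langle 1\rangle)$ is a free resolution of the residue field $\bC$ (see \S\ref{ss:tca-ex}), so $\Tor^A_p(M,\bC)=H_p\bigl(M\otimes\bw{\bullet}(U\langle 1\rangle)\bigr)$, the differential being built from the multiplication $A_1\otimes M\to M$ and the comultiplication $\bw{p}(U\langle 1\rangle)\to U\langle 1\rangle\otimes\bw{p-1}(U\langle 1\rangle)$. Since $\bw{p}(U\langle 1\rangle)$ lies in degree $p$ (by the Cauchy identity \pref{schur:cauchy}), the internal-degree-$(p+n)$ part of this complex is precisely the complex computing $\mc{T}_n(M)$. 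Now put $T(M)=\bigoplus_p\Tor^A_p(M,\bC)^{\vee}$; as a graded object this is exactly $\bigoplus_n\mc{T}_n(M)^{\vee}$, and the comodule structure over $\Tor^A_\bullet(\bC,\bC)=\bw{\bullet}(U\langle 1\rangle)$ dualizes to a module structure over $E:=\bw{}(U^{*}\langle 1\rangle)$ --- the structure referred to in the statement, equivalently the linear part of the minimal free resolution of $M$. The coaction preserves the quantity $(\text{internal degree})-(\text{homological degree})$, so $E$ acts on $T(M)$ preserving the summands $\mc{T}_n(M)^{\vee}$; as $E$ is connected, $T(M)$ is finitely generated over $E$ exactly when every $\mc{T}_n(M)^{\vee}$ is and all but finitely many vanish. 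Thus the theorem is equivalent to: $M\mapsto T(M)$ carries finitely generated $A$-modules to finitely generated $E$-modules.

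\textbf{Noetherianity of $E$.} By Proposition~\pref{prop:transp} the transpose functor is an equivalence of symmetric tensor categories $(\cV,\tau)\to(\cV,\sigma)$; using $(U^{*}\langle 1\rangle)^{\dag}=U^{*}\langle 1\rangle$ and $\Sym^{k}(U^{*}\langle 1\rangle)^{\dag}=\bw{k}(U^{*}\langle 1\rangle)$ (again \pref{schur:cauchy}), it identifies the polynomial tca $\Sym(U^{*}\langle 1\rangle)$ with the graded-commutative algebra $\bw{}(U^{*}\langle 1\rangle)=E$, so $\Mod_{E}\cong\Mod_{\Sym(U^{*}\langle 1\rangle)}$. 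Since $\dim U^{*}<\infty$, the tca $\Sym(U^{*}\langle 1\rangle)$ is bounded and noetherian (Proposition~\pref{cor:noeth}), and hence so is $E$.

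\textbf{Dévissage.} A short exact sequence $0\to M'\to P\to M\to 0$ of finitely generated $A$-modules produces, via the long exact sequence of $\Tor^A(-,\bC)$ and the naturality of the $E$-action, a long exact sequence of graded $E$-modules, from which $T(M)$ is an extension of an $E$-submodule of $T(P)$ by an $E$-quotient of $T(M')$ (up to the degree shift introduced by the connecting maps). Since $E$ is noetherian, the class of finitely generated $A$-modules $M$ with $T(M)$ finitely generated satisfies the two-out-of-three property for short exact sequences, and in particular is closed under extensions. For a free module $P=A\otimes V$ with $V$ of finite length, $\Tor^A_p(P,\bC)=0$ for $p>0$ and $T(P)=V^{\vee}$, a finite-length $E$-module. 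Finally, because $A=\Sym(U\langle 1\rangle)$ is bounded and noetherian, every finitely generated $M$ admits a finite filtration with successive quotients shifts of $A/\fp$ for $\GL(\infty)$-stable primes $\fp$ --- an equivariant prime filtration, obtained after evaluating on $\bC^{N}$ for $N\gg 0$ (Proposition~\pref{prop:boundedtcasubmodules}), the associated primes being $\GL$-stable since $\GL(\infty)$ is connected (cf.\ Proposition~\pref{minprime}). Combining this with extension-closedness, the theorem reduces to showing that $T(A/\fp)$ is finitely generated over $E$ for each $\GL(\infty)$-stable prime $\fp$.

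\textbf{The main obstacle} is exactly this last point, which amounts to finiteness of the Castelnuovo--Mumford regularity of $A/\fp$ over the infinite-variable ring $A$ --- the feature that genuinely requires the tca structure and is absent from naive ``generated in degree $>1$'' reasoning. For $\fp=0$ one has $T(A)=\bC$. For $\fp\neq 0$ I would run a secondary induction on a size invariant of $\fp$ (its codimension, or the growth rate of $\dim (A/\fp)(\bC^{N})$ in $N$), using the classification of $\GL(\infty)$-stable primes of the degree-$1$ polynomial tca $A$ together with the geometric technique --- an equivariant Kempf-type collapsing of a bundle over a Grassmannian, as in the rank-variety discussion of \S\ref{sec:tca-bd} --- to produce a resolution of $A/\fp$ over $A$ whose Koszul-dual data assemble into a finitely generated $E$-module; already in the simplest case $A/\fp_{L}\cong\Sym(\bC\langle 1\rangle)$ a direct computation gives that $T(A/\fp_{L})$ is the \emph{cyclic} $E$-module $E/\langle L^{*}\langle 1\rangle\rangle$, which is the behaviour to expect in general. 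Alternatively the needed regularity bound can be imported from the ``system of parameters'' statement announced in \S\ref{sec:announce} --- the existence of subspaces $V_{i}\subset A_{1}$ with $\partial_{V_{1}}\cdots\partial_{V_{n}}(M)$ acyclic --- which directly forces $\mc{T}_n(M)$ to vanish for $n$ outside a finite range. Making either route work uniformly, i.e.\ establishing finite regularity for all finitely generated $A$-modules, is where the substance of the proof lies.
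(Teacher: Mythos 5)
The paper does not actually contain a proof of this theorem: \S\ref{ss:finiteres} announces it from \cite{koszul}, which was still in preparation, so there is no ``paper's own proof'' to compare against and your proposal must be judged on its own.

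Your preliminary reductions are correct and well chosen. The observation that the $\bw{}(U\langle 1 \rangle)$-coaction preserves (internal degree) $-$ (homological degree), so that the theorem repackages as finite generation of $T(M)=\bigoplus_n \mc{T}_n(M)^\vee$ over $E=\bw{}(U^*\langle 1 \rangle)$, is exactly the right starting point, and your noetherianity of $E$ --- via the transpose equivalence $\Mod_E \cong \Mod_{\Sym(U^*\langle 1\rangle)}$ supplied by Proposition~\pref{prop:transp} together with Proposition~\pref{cor:noeth} --- is a genuinely elegant application of the transpose machinery.

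That said, the proposal does not prove the theorem, and there are two gaps. The lesser one is the dévissage: you assert that every finitely generated equivariant $A$-module admits a $\GL$-equivariant filtration whose successive quotients are $\bS_\lambda$-shifts of $A/\fp$ for $\GL$-stable primes $\fp$. This is not automatic. A classical prime filtration has cyclic quotients $A/\fp$, but the cyclic generator need not be $\GL$-invariant, so the filtration need not be $\GL$-stable; the natural equivariant construction (peeling off $\Gamma_\fp(M)$ for maximal associated primes $\fp$) produces quotients that are finitely generated equivariant $(A/\fp)$-modules, not shifts of $A/\fp$ itself. This can probably be repaired by a secondary induction, but as written it is unjustified. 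The decisive gap you concede yourself: you do not establish the base case, namely that $T(A/\fp)$ is finitely generated over $E$ --- equivalently, that $A/\fp$ has finite regularity with finitely generated linear strands. Everything preceding this point (the reformulation, noetherianity of $E$, long exact sequences and two-out-of-three) is scaffolding that any proof would need; the content of the theorem lies entirely in the step you leave open. Of your two suggested routes, the Kempf-collapsing argument is not carried out and is not obviously uniform across the primes of $A$, and invoking the system-of-parameters statement from \S\ref{sec:announce} is circular in this setting since it is itself an unproved announcement from the same forthcoming paper. What you have is a clean reduction of the theorem to its essential content, not a proof of it.
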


\article
Recall that the quantity
\begin{align}
{\rm reg}(M) = \max_i \{j \mid \Tor^A_i(M,\bC)_{i+j} \ne 0\}
\end{align}
is called the {\bf regularity} of $M$.  Clearly, the regularity of $M$ is just the largest value of $n$ for which $\mc{T}_n(M)$ is non-zero.  We thus obtain the following corollary:

\begin{corollary}
The regularity of a finitely generated $A$-module is finite.
\end{corollary}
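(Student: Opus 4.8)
The plan is to deduce this directly from Theorem~\pref{thm:fglinearstrands} by unwinding the definition of regularity. First I would recall the reformulation already noted above. Since
\[
\mc{T}_n(M)=\bigoplus_{p \ge 0}\Tor_p^A(M,\bC)_{p+n},
\]
the object $\mc{T}_n(M)$ of $\cV$ is non-zero exactly when $\Tor_p^A(M,\bC)_{p+n}\neq 0$ for some $p$, that is, exactly when $n$ occurs among the values $j$ in the defining formula for $\reg(M)$. Hence
\[
\reg(M)=\max\{\,n\mid \mc{T}_n(M)\neq 0\,\},
\]
where we adopt the usual convention that $\max\emptyset=-\infty$; the empty case arises only if $M=0$.

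Next I would observe that passing to duals does not affect vanishing: the functor $(\cdot)^{\vee}$ on $\cV$ is computed degreewise as the linear dual of vector spaces (see \pref{ss:cvbasic}), so for any object $X$ of $\cV$ we have $X=0$ if and only if $X^{\vee}=0$. Applying this to $X=\mc{T}_n(M)$ gives $\{\,n\mid \mc{T}_n(M)\neq 0\,\}=\{\,n\mid \mc{T}_n(M)^{\vee}\neq 0\,\}$.

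Finally, Theorem~\pref{thm:fglinearstrands} tells us that $\mc{T}_n(M)^{\vee}$ is non-zero for only finitely many $n$. Therefore the set appearing in the displayed formula for $\reg(M)$ is finite, and so $\reg(M)$ is a finite integer (interpreted as $-\infty$ in the degenerate case $M=0$). This proves the corollary. The point worth emphasizing is that there is essentially no obstacle here: all the real content --- the finite generation of $\mc{T}_n(M)^{\vee}$ over $\bw{}(U^*\langle 1\rangle)$ and its vanishing for large $n$ --- is packaged into Theorem~\pref{thm:fglinearstrands}, and the corollary is merely a translation of that vanishing statement through the definitions of $\mc{T}_n$ and regularity.
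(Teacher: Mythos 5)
Your argument is correct and is exactly the paper's route: the paper simply notes that $\reg(M)$ is the largest $n$ with $\mc{T}_n(M)\ne 0$ and then invokes Theorem~\pref{thm:fglinearstrands}. Your additional remark that $(\cdot)^{\vee}$ preserves (non)vanishing is the right small point to make explicit, but it is the same proof.
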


\article
The $\delta$-functor $\{\mc{T}^{\vee}_n\}$ lifts to a functor $\mc{T}^{\vee}$ between derived categories, and induces the following version of Koszul duality.

\begin{theorem}
The functor $\mc{T}^{\vee}$ gives an anti-equivalence of $\rD^b(A)$ with $\rD^b(\bw{}(U^*\langle 1 \rangle))$.
\end{theorem}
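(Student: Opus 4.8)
The plan is to recognize $\mc{T}^\vee$ as one half of a Koszul-duality equivalence for the Koszul pair $(A, B)$, with $B = \bw{}(U^*\langle 1 \rangle)$, to construct the quasi-inverse by the mirror construction on $B$, and to identify the two compositions with the identity. Recall that $A = \Sym(U\langle 1 \rangle)$ is a Koszul algebra in the tensor category $\cV$: its Koszul complex $K^A_\bullet = A \otimes \bw{\bullet}(U\langle 1 \rangle)$ resolves the residue field $\bC$, and since $\bw{\bullet}(U\langle 1 \rangle)$ is dual to the exterior algebra $B$, the complex $K^A_\bullet$ is simultaneously a complex of $A$-modules and of $B$-comodules, so that its total object is a Koszul $(A,B)$-bimodule. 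Up to the ``linear strand'' regrading that defines the modules $\mc{T}_n$, the functor $\mc{T}^\vee$ is nothing but $M \mapsto \mathbf{R}\Hom_A(M, \bC)$ with its induced $B$-module structure; concretely it is $\Hom_A(M, K^A_\bullet)$, regraded so that a class in $\Tor^A_p$ of internal degree $p+n$ lands in cohomological degree $n$. Theorem~\pref{thm:fglinearstrands} says exactly that after this regrading one obtains a bounded complex of finitely generated $B$-modules, so $\mc{T}^\vee$ is a well-defined exact contravariant functor $\rD^b(A) \to \rD^b(B)$.

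Next I would set up the candidate quasi-inverse $\mc{S}^\vee \colon \rD^b(B) \to \rD^b(A)$ by running the same recipe with the roles of $A$ and $B$ exchanged: $B$ is likewise Koszul, with Koszul dual $A$, so $\mathbf{R}\Hom_B(-, \bC)$ carries an $A$-module structure and, after the analogous regrading, lands in $\rD^b(A)$. Here the needed boundedness is immediate rather than deep: $B_+$ is nilpotent, since $\bw{k}(U^*\langle 1 \rangle) = 0$ for $k > \dim U$, so every finitely generated $B$-module has finite length and hence only finitely many nonzero linear strands. I would then record the values on the usual families of objects --- $\mc{T}^\vee(A \otimes W) = \bC \otimes W^\vee$, $\mc{T}^\vee(\bC \otimes W) = B \otimes W^\vee$, and the mirror identities for $\mc{S}^\vee$, valid for $W$ in $\cV_{\fin}$ --- which are immediate from $\Tor^A_\bullet(A, \bC) = \bC$ and $\Tor^A_\bullet(\bC, \bC) = \bw{\bullet}(U\langle 1 \rangle)$, and which display $\mc{T}^\vee$ and $\mc{S}^\vee$ as interchanging ``free'' and ``residue'' objects while swapping $A$ and $B$ --- the signature of Koszul duality.

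It remains to show the biduality morphisms $\mathrm{id} \to \mc{S}^\vee \circ \mc{T}^\vee$ and $\mathrm{id} \to \mc{T}^\vee \circ \mc{S}^\vee$ are isomorphisms. Both come from the Koszul \emph{bimodule} complex: for $A$ Koszul, $A \otimes \bw{\bullet}(U\langle 1 \rangle) \otimes A$ with its Koszul differential is a resolution of the diagonal $A$-bimodule, and applying $M \otimes_A -$ together with the relevant dualities identifies $\mc{S}^\vee \mc{T}^\vee(M)$ with the total complex of $\Hom_B(\Hom_A(M, K^A_\bullet), K^B_\bullet)$, the biduality map being the evident evaluation morphism. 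The composition $\mc{T}^\vee \circ \mc{S}^\vee$ is the easy case: on the $B$-side every module has finite length, the double complex is bounded, and one checks the evaluation map is an isomorphism by d\'evissage down to the objects $\bC \otimes W$, which generate $\rD^b(B)$ as a triangulated category (again because $B_+$ is nilpotent), where it is the identity by the computations above. For $\mc{S}^\vee \circ \mc{T}^\vee \simeq \mathrm{id}_{\rD^b(A)}$ the argument has the same shape, but now Theorem~\pref{thm:fglinearstrands} is what ensures that the a priori unbounded double complex, after the linear-strand regrading, has bounded amplitude, so that the spectral sequence of the double complex converges and collapses onto $M$.

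The real obstacle is exactly this last convergence point. Since the Koszul resolution of $\bC$ over $A$ is infinite, $\Hom_A(M, K^A_\bullet)$ and the iterated construction are a priori unbounded, and the whole statement rests on the fact --- furnished by Theorem~\pref{thm:fglinearstrands}, i.e.\ finiteness of the regularity (the number of nonzero strands) --- that the strand regrading straightens them into bounded objects; one must moreover keep the homological grading and the intrinsic $\cV$-grading that the strand index subtracts off cleanly separated throughout, so that the biduality map is an honest quasi-isomorphism and not merely a strand-by-strand isomorphism. The remaining ingredients --- Koszulity of $A$ and $B$, nilpotence of $B_+$, exactness of the functors, and the values on generators --- are routine.
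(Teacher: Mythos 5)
There is a genuine gap, and it sits exactly where you think the argument is ``easy.'' You assert that $B_+$ is nilpotent because $\bw{k}(U^*\langle 1 \rangle)=0$ for $k>\dim U$, but this is false: $U^*\langle 1 \rangle$ is not the finite-dimensional vector space $U^*$, it is the object of $\cV$ whose $\GL$-model realization is $U^*\otimes\bC^{\infty}$. By the Cauchy identity \pref{schur:cauchy}, $\bw{k}(U^*\otimes\bC^{\infty})=\bigoplus_{\lambda\vdash k,\ \ell(\lambda)\le\dim U}\bS_{\lambda}(U^*)\otimes\bS_{\lambda^{\dag}}(\bC^{\infty})$, which is nonzero in every degree $k$ (even when $\dim U=1$, where $\bw{k}(\bC^{\infty})\ne 0$ for all $k$). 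So $B_+$ is not nilpotent, finitely generated $B$-modules generally do not have finite length, and the d\'evissage to the objects $\bC\otimes W$ as generators of $\rD^b(B)$ collapses. With it collapses the claimed boundedness of $\mc{S}^{\vee}$ and your argument that $\mc{T}^{\vee}\circ\mc{S}^{\vee}\simeq\mathrm{id}$; neither side of the duality is ``easy.''

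In fact the two sides are symmetric in a much stronger sense than your proposal uses, and this symmetry is what rescues the argument. As the paper itself observes immediately after this theorem, the transpose of $B=\bw{}(U^*\langle 1 \rangle)$ is the tca $A'=\Sym(U^*\langle 1 \rangle)$, and transpose gives an equivalence of module categories $\Mod_B\simeq\Mod_{A'}$ (Proposition~\pref{prop:transp} and \pref{transp:comp}). Thus $B$-modules are exactly as large and as complicated as $A$-modules, and the boundedness you need for $\mc{S}^{\vee}$ is \emph{not} free --- it must be supplied by applying Theorem~\pref{thm:fglinearstrands} to $A'$ and transporting the conclusion across the transpose equivalence. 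Once you replace the nilpotence claim by this transpose identification, your overall strategy (Koszul bimodule complex, mirror construction for $\mc{S}^{\vee}$, biduality by evaluation, d\'evissage to projectives and to the residue field) becomes sound and is essentially the ``Fourier transform'' formulation $\mc{F}=(\mc{T}^{\vee})^{\dag}$ that the paper records right afterward; but both compositions then rest on Theorem~\pref{thm:fglinearstrands}, on each side, and there is no shortcut via nilpotence of $B_+$.
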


Here we use the derived categories of finitely generated modules.  The corresponding result for polynomial rings is equivalent to the fact that finitely generated modules have finite resolutions.  Thus the above result allows us to make the following statement:  $A$-modules lack finite projective resolutions only because $\bw{}(U^*\langle 1 \rangle)$ has infinite length.

\article
We can take Koszul duality one step further, as follows.  The transpose of $\bw{}(U^*\langle 1 \rangle)$ is the tca $A'=\Sym(U^*\langle 1 \rangle)$.  Furthermore, transpose puts the categories of $\bw{}(U^*\langle 1 \rangle)$-modules and $A'$-modules in equivalence.  In particular, if $M$ is an $A$-module then $\mc{F}_n(M)=(\mc{T}_n^{\vee})^{\dag}$ is an $A'$-module.  

\begin{theorem}
The functor $\mc{F}$ gives an anti-equivalence of $\rD^b(A)$ with $\rD^b(A')$.
\end{theorem}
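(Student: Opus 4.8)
The plan is to realize $\mathcal{F}$ as the composite of two functors that have already been identified: the contravariant equivalence $\mathcal{T}^{\vee}\colon \rD^b(A)\to\rD^b(\bw{}(U^*\langle 1\rangle))$ of the preceding Koszul-duality theorem, followed by the transpose functor $(-)^{\dag}$. Thus the whole argument reduces to checking that $(-)^{\dag}$ descends to a covariant equivalence $\rD^b(\bw{}(U^*\langle 1\rangle))\xrightarrow{\ \sim\ }\rD^b(A')$ of the bounded derived categories of finitely generated modules, and that composing it with $\mathcal{T}^{\vee}$ recovers the $\delta$-functor $M\mapsto\bigoplus_n\mathcal{F}_n(M)=\bigoplus_n(\mathcal{T}_n(M)^{\vee})^{\dag}$.

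First I would record the relevant properties of transpose. In the sequence model it is given degreewise by $V_n\mapsto V_n\otimes\sgn$, so it is an exact involution of $\cV$; by Proposition~\pref{prop:transp} it is moreover an equivalence of tensor categories $(\cV,\tau)\to(\cV,\sigma)$. Using \pref{transp:comp} (composition with a $\cV$-object concentrated in odd degrees) one has $(\bw{}(U^*\langle 1\rangle))^{\dag}=\Sym(U^*\langle 1\rangle)=A'$, and since transpose is a tensor functor it carries the action map $\bw{}(U^*\langle 1\rangle)\otimes M\to M$ of a module $M$ to an action map $A'\otimes M^{\dag}\to M^{\dag}$; this is exactly the equivalence $\Mod_{\bw{}(U^*\langle 1\rangle)}\cong\Mod_{A'}$ recorded earlier in the text. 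Transpose also preserves finite generation: a generating object $\bC\langle n\rangle$ satisfies $\bC\langle n\rangle^{\dag}\cong\bC\langle n\rangle$ because $\bC[S_n]\otimes\sgn\cong\bC[S_n]$, so a presentation $A'\otimes V\twoheadrightarrow N$ transposes to $\bw{}(U^*\langle 1\rangle)\otimes V^{\dag}\twoheadrightarrow N^{\dag}$ with $V^{\dag}$ again of finite length. Being an exact equivalence of abelian categories that preserves the subcategories of finitely generated modules, transpose extends canonically to an equivalence of the corresponding bounded derived categories.

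Now I would conclude. By the Koszul-duality theorem, $\mathcal{T}^{\vee}$ is a contravariant equivalence $\rD^b(A)\to\rD^b(\bw{}(U^*\langle 1\rangle))$; postcomposing with the covariant equivalence $(-)^{\dag}$ gives a contravariant equivalence $\mathcal{F}\colon\rD^b(A)\to\rD^b(A')$. Because transpose is exact, its derived functor is computed termwise, so the composite of the derived functors agrees with the derived functor of the composition; and since transpose commutes with the internal grading that singles out the pieces $\mathcal{T}_n$, on objects one indeed obtains $M\mapsto\bigoplus_n(\mathcal{T}_n(M)^{\vee})^{\dag}=\bigoplus_n\mathcal{F}_n(M)$, i.e.\ the functor $\mathcal{F}$ of the statement. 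Hence $\mathcal{F}$ is an anti-equivalence, as claimed.

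The one genuinely non-formal point — which is where I expect any difficulty to lie — is the compatibility of transpose with module structures: one must know that the transpose of a $\bw{}(U^*\langle 1\rangle)$-module really is a module over the \emph{tca} $A'$ with its standard ($\tau$-)commutative product, the sign discrepancy between a graded-commutative exterior algebra and a twisted-commutative one being absorbed into the symmetric-group actions on the graded pieces. This is precisely the content of the equivalence $\Mod_{\bw{}(U^*\langle 1\rangle)}\cong\Mod_{A'}$ already granted above, so beyond invoking it no new work is required, and everything else is a formal assembly of the pieces.
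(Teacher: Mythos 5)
Your proposal is correct and matches the argument the paper itself sketches in the paragraph preceding the theorem: the theorem is a formal consequence of the preceding Koszul-duality theorem (giving the anti-equivalence $\mathcal{T}^{\vee}\colon \rD^b(A)\to\rD^b(\bw{}(U^*\langle 1\rangle))$) together with the observation that transpose carries $\bw{}(U^*\langle 1\rangle)$-modules to $A'$-modules, and $\mathcal{F}$ is by definition the composite. The paper (an announcement section citing a paper in preparation) gives no further detail, and the supporting points you supply — exactness and involutivity of transpose, that it is a symmetric tensor functor $(\cV,\tau)\to(\cV,\sigma)$ so it converts the $\sigma$-commutative exterior algebra into the $\tau$-commutative $A'$ and respects module structures, and that $\bC\langle n\rangle^{\dag}\cong\bC\langle n\rangle$ so finite generation is preserved — are exactly the right things to check.
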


Choosing an identification of $U$ with $U^*$ gives an identification of $A$ with $A'$, and so:

\begin{corollary}
The category $\rD^b(A)$ is equivalent to its opposite.
\end{corollary}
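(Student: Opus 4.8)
The plan is to deduce the corollary formally from the preceding theorem, whose content is an anti-equivalence $\mc{F}\colon \rD^b(A) \to \rD^b(A')$ with $A' = \Sym(U^*\langle 1 \rangle)$; the only extra ingredient is that $A$ and $A'$ are isomorphic as tca's, and the latter is immediate since $U$ is a finite-dimensional vector space.

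First I would choose a linear isomorphism $\phi\colon U \xrightarrow{\cong} U^*$. Applying the (symmetric tensor) functor $V \mapsto \Sym(V\langle 1 \rangle)$ on $\cV$ to $\phi$ produces an isomorphism of tca's $\Phi\colon A \xrightarrow{\cong} A'$. Pullback of modules along $\Phi$ is then an exact equivalence $\Phi^*\colon \Mod_{A'} \xrightarrow{\cong} \Mod_A$ which preserves finite generation, and hence passes to an equivalence of triangulated categories $\Phi^*\colon \rD^b(A') \xrightarrow{\cong} \rD^b(A)$ on the derived categories of finitely generated modules used in the theorem.

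Finally I would compose. Regarding the anti-equivalence $\mc{F}$ as an equivalence $\rD^b(A) \xrightarrow{\cong} \rD^b(A')^{\op}$, and taking the opposite of $\Phi^*$ to obtain an equivalence $(\Phi^*)^{\op}\colon \rD^b(A')^{\op} \xrightarrow{\cong} \rD^b(A)^{\op}$, the composite $(\Phi^*)^{\op} \circ \mc{F}\colon \rD^b(A) \xrightarrow{\cong} \rD^b(A)^{\op}$ is the asserted equivalence. There is no genuine obstacle here: the statement is a formal consequence of the theorem together with the self-duality of a finite-dimensional vector space. The one mild subtlety worth flagging is that the identification of $A$ with $A'$ depends on the choice of $\phi$, so the resulting self-duality of $\rD^b(A)$ is not canonical — but this does not affect the truth of the assertion.
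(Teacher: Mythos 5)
Your proposal is correct and matches the paper's argument: the paper states ``Choosing an identification of $U$ with $U^*$ gives an identification of $A$ with $A'$,'' and then deduces the corollary from the anti-equivalence $\mc{F}\colon \rD^b(A) \to \rD^b(A')$. Your write-up simply spells out the same one-line reduction in more detail, including the correct caveat that the resulting self-duality depends on the choice of $\phi$ and is therefore non-canonical.
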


We call the functor $\mc{F}$ the {\bf Fourier transform}.  Nothing like it exists in usual commutative algebra due to the lack of the transpose operation.

\article
We now give an application of the above results.  Define the {\bf Poincar\'e series} of an $A$-module $M$ by
\begin{align}
P_M(t,q)=\sum_{n \ge 0} (-q)^n H_{\Tor^A_n(M, \bC)}(t).
\end{align}
This is a more subtle invariant than the Hilbert series.  For instance, it detects information about the $A$-module structure on $M$ (which the Hilbert series does not) and it is not additive in short exact sequences.  In typical commutative algebra, the Poincar\'e series of a module over a polynomial ring is obviously a polynomial, while Poincar\'e series over more general rings need not have any sort of rationality properties \cite[\S 7]{anick}.  In the case of $A$-modules that we are now considering, the situation is more interesting:  the Poincar\'e series is typically infinite, but since the ring $A$ is ``nice'' one can expect it to be well-behaved.  And indeed this is the case:

\begin{theorem}
Let $M$ be a finitely generated $A$-module.  Then $P_M$ belongs to $\bQ[t,q^{\pm 1},e^{-tq}]$
\end{theorem}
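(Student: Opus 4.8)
The plan is to split the Poincaré series along the ``shifted linear strands'' $\mc{T}_n(M)$ introduced in \pref{ss:finiteres}, observe that the resulting sum is finite by Theorem~\pref{thm:fglinearstrands}, and then reduce each summand to the already-known rationality of ordinary Hilbert series by way of the Fourier transform. Throughout I work with $A=\Sym(U\langle 1\rangle)$ in the sequence model, where for an $A$-module the internal grading and the grading of $\cV$ are literally the same index; in particular the internal-degree-$j$ part $\Tor^A_p(M,\bC)_j$ is exactly the $\cV_j$-component of $\Tor^A_p(M,\bC)$. Consequently $\mc{T}_n(M)=\bigoplus_{p\ge 0}\Tor^A_p(M,\bC)_{p+n}$ is a graded object of $\cV$ whose degree-$m$ piece is $\Tor^A_{m-n}(M,\bC)_m$.

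\textbf{Step 1: the bookkeeping identity.} First I would record
\[
P_M(t,q)=\sum_{n}\,(-q)^{-n}\,H_{\mc{T}_n(M)}(-qt),
\]
obtained by writing $P_M(t,q)=\sum_{p,m}(-q)^p\dim(\Tor^A_p(M,\bC)_m)\,t^m/m!$, reindexing the double sum by $n=m-p$, and using $(-q)^p t^{p+n}=(-q)^{-n}(-qt)^{p+n}$ together with the identification of $\mc{T}_n(M)_m$ above. Since $A$ is noetherian (it is finitely generated in degree $1$) and $M$ is finitely generated, each $\Tor^A_p(M,\bC)$ has finite length, so every $\mc{T}_n(M)$ is graded-finite and its Hilbert series is a well-defined power series; by Theorem~\pref{thm:fglinearstrands}, $\mc{T}_n(M)^{\vee}$, hence $\mc{T}_n(M)$, is non-zero for only finitely many $n$, so the displayed sum is finite.

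\textbf{Step 2: controlling each strand via the Fourier transform.} For each such $n$ I would pass to $\mc{F}_n(M)=(\mc{T}_n(M)^{\vee})^{\dag}$. By Theorem~\pref{thm:fglinearstrands}, $\mc{T}_n(M)^{\vee}$ is finitely generated over $\bw{}(U^*\langle 1\rangle)$, and since transpose is an equivalence of categories (Proposition~\pref{prop:transp}) carrying $\bw{}(U^*\langle 1\rangle)$ to the tca $A'=\Sym(U^*\langle 1\rangle)$, the object $\mc{F}_n(M)$ is a finitely generated $A'$-module, with $A'$ generated in degree $1$. Both duality $(\cdot)^{\vee}$ and transpose $(\cdot)^{\dag}$ preserve the dimensions of all graded pieces, so $H_{\mc{T}_n(M)}=H_{\mc{F}_n(M)}$, and the rationality theorem for Hilbert series (\cite[Thm.~3.1]{snowden}) gives $H_{\mc{F}_n(M)}(t)\in\bQ[t,e^t]$. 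Substituting $t\mapsto -qt$ then shows $H_{\mc{T}_n(M)}(-qt)\in\bQ[t,q,e^{-tq}]$, and multiplying by $(-q)^{-n}\in\bQ[q^{\pm 1}]$ keeps us in $\bQ[t,q^{\pm 1},e^{-tq}]$; summing the finitely many terms from Step~1 yields $P_M(t,q)\in\bQ[t,q^{\pm 1},e^{-tq}]$.

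\textbf{Main obstacle.} This statement is essentially a corollary of the structural results of \pref{ss:finiteres}, so I do not expect a serious difficulty. The one point needing care is the grading bookkeeping in Step~1 --- checking that $\Tor^A_p(M,\bC)$ in homological degree $p$ and internal degree $p+n$ genuinely sits in $\cV_{p+n}$, which is precisely what legitimizes the single ``diagonal'' reindexing that converts $P_M$ into a finite sum of rescaled strand Hilbert series.
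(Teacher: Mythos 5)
Your proof is correct and follows essentially the same route as the paper's: reindex the Poincaré series along the shifted linear strands $\mc{T}_n(M)$, invoke Theorem~\pref{thm:fglinearstrands} to get finiteness of the sum and finite generation of $\mc{F}_n(M)$ over $A'$, and then apply the Hilbert series rationality theorem. The only cosmetic difference is that your bookkeeping produces the coefficient $(-q)^{-n}$ where the paper writes $q^{-n}$, a sign that is immaterial for membership in $\bQ[t,q^{\pm 1},e^{-tq}]$.
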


\begin{proof}
A simple manipulation shows that
\begin{align*}
P_M(t,q)=\sum_{n \ge 0} q^{-n} H_{\mc{F}_n(M)}(-tq),
\end{align*}
where $\mc{F}_n$ denotes the homology of $\mc{F}$.  Since the above sum is finite and each $\mc{F}_n(M)$ is finitely generated over $A'$, the result follows from what we know about Hilbert series.
\end{proof}

\subsection{Depth and dimension}

\article
The results of this section are from \cite{koszul} (see also \cite{symc1} for the case $\dim U = 1$).  Let $A=\Sym(U\langle 1 \rangle)$ and let $M$ be a finitely generated $A$-module. Let $d_M(n)$ be the depth of the $A(\bC^n)$-module $M(\bC^n)$. Then this function is well-behaved:

\begin{proposition}
There are integers $0 \le a \le \dim U$ and $b$ such that $d_M(n) = an+b$ for $n \gg 0$. If $a = \dim U$, then $M$ is projective and $b=0$.
\end{proposition}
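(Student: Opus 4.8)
The plan is to combine the Auslander--Buchsbaum formula with the finiteness of linear strands (Theorem~\pref{thm:fglinearstrands}). Since in the Schur model $A(\bC^n)=\Sym(U\otimes\bC^n)$ is a polynomial ring in $n\dim U$ variables, hence regular of dimension $n\dim U$, Auslander--Buchsbaum gives $d_M(n)=n\dim U-\pi(n)$ with $\pi(n):=\pdim_{A(\bC^n)}M(\bC^n)$. So it suffices to show that $\pi(n)$ is eventually of the form $cn+d$ with $c\in\{0,1,\dots,\dim U\}$, and that $c=0$ forces $d=0$ and $M$ projective. First I would observe that evaluation at $\bC^n$ is exact, carries $A\otimes V$ to $A(\bC^n)\otimes V(\bC^n)$, and preserves the minimality condition (differentials with entries in $A_+$ go to differentials with entries in $A(\bC^n)_+$); hence it sends the minimal free resolution of $M$ over $A$ to that of $M(\bC^n)$ over $A(\bC^n)$ (up to discarding terms with $V(\bC^n)=0$). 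Writing $T_p:=\Tor_p^A(M,\bC)\in\cV$, this gives $\Tor^{A(\bC^n)}_p(M(\bC^n),\bC)=T_p(\bC^n)$, so $\pi(n)=\max\{p: T_p\text{ has a simple constituent }\bS_\lambda\text{ with }\ell(\lambda)\le n\}$. As a sanity check: minimality yields an inclusion $T_p\hookrightarrow A_+\otimes T_{p-1}$ of objects of $\cV$, and since $\ell(\mu)\ge\ell(\nu)$ whenever $\bS_\mu\subset\bS_\sigma\otimes\bS_\nu$, the sequence $\rho_p:=\min\{\ell(\lambda):\bS_\lambda\subset T_p\}$ is nondecreasing and tends to $\infty$ unless $T_p=0$ for $p\gg0$; so $\pi(n)=\max\{p:\rho_p\le n\}$ is already seen to be nondecreasing.

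The key reorganization uses the linear strands $\mc{T}_m(M)=\bigoplus_p(T_p)_{p+m}$ (internal grading). A constituent of $T_p$ of internal degree $p+m$ is the same as one of $\mc{T}_m(M)$ of internal degree $p+m$, so
\[
\pi(n)=\max_m\bigl(g_m(n)-m\bigr),\qquad g_m(n):=\bigl(\text{top internal degree of }\mc{T}_m(M)(\bC^n)\bigr),
\]
with $g_m(n):=-\infty$ if $\mc{T}_m(M)(\bC^n)=0$. Since $\mc{T}_m(M)$ and $\mc{T}_m(M)^\vee$ have the same simple constituents, $g_m(n)$ is also the top degree of the graded module $\mc{T}_m(M)^\vee(\bC^n)$, which by Theorem~\pref{thm:fglinearstrands} is finitely generated over the exterior algebra $R(\bC^n)$, where $R:=\bw{}(U^*\langle 1 \rangle)$ and $R(\bC^n)=\bw{}(U^*\otimes\bC^n)\cong\bw{}(\bC^{n\dim U})$; moreover $\mc{T}_m(M)^\vee=0$ for $m>\reg M$, so the maximum is over the finite range $0\le m\le\reg M$. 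Each $g_m$ is nondecreasing in $n$ (the inclusions $\bC^n\hookrightarrow\bC^{n+1}$ induce injections on polynomial functors).

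Everything thus reduces to a \emph{Main Lemma}: for any finitely generated $R$-module $F$, the function $g_F(n)=\bigl(\text{top degree of }F(\bC^n)\text{ over }\bw{}(\bC^{n\dim U})\bigr)$ is eventually linear in $n$ with slope in $\{0,1,\dots,\dim U\}$. Granting it, $\pi(n)$ is a maximum of finitely many eventually-linear functions, hence eventually $cn+d$ with $c$ the largest slope, so $c\in\{0,\dots,\dim U\}$ and $d_M(n)=n\dim U-\pi(n)$ is eventually $an+b$ with $a=\dim U-c\in\{0,\dots,\dim U\}$. For the Main Lemma the upper bound $g_F(n)\le n\dim U+O(1)$ is immediate from a finite presentation $R\otimes W_1\to R\otimes W_0\to F\to 0$ (so $F(\bC^n)$ is a quotient of $\bw{}(\bC^{n\dim U})\otimes W_0(\bC^n)$, whose top degree is $n\dim U$ plus the eventually-constant top degree of $W_0(\bC^n)$). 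The exact eventual linearity is the hard part: it amounts to controlling, uniformly in $n$, the Castelnuovo--Mumford-type regularity of the coherent family $F(\bC^n)$ over the Gorenstein Artinian ring $\bw{}(\bC^{n\dim U})$. I would attack this through the transpose equivalence $\Mod_R\cong\Mod_{A'}$ with $A'=\Sym(U^*\langle 1 \rangle)=R^{\dag}$ (Proposition~\pref{prop:transp}), using that $A'$ is a \emph{bounded} noetherian tca (Propositions~\pref{prop:truncation} and~\pref{prop:boundedtcasubmodules}) and that $R$ is finitely generated in degree $1$, so that both the boundedness dictionary and the rationality of the enhanced Hilbert series are available; a dimension count of the support of $F$ in the style of Proposition~\pref{prop:rankvar} (whose dimension over $\bC^n$ grows linearly with slope $\le\dim U$) should then pin down the slope.

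Finally, the boundary case $a=\dim U$, i.e.\ $c=0$: then $\pi(n)=\max_m(g_m(n)-m)$ is eventually constant, and since each $g_m$ is nondecreasing this forces each $g_m$ eventually constant, i.e.\ (the slope-$0$ case of the Main Lemma) each $\mc{T}_m(M)^\vee$ of finite length over $R$, i.e.\ each $\mc{T}_m(M)$ of finite length in $\cV$; by $\mc{T}_m(M)=\bigoplus_p(T_p)_{p+m}$ this means only finitely many $T_p$ are nonzero, so $\pdim_A M<\infty$. To conclude that $M$ is projective I would argue by descending induction on $\pdim_A M$: the first syzygy has strictly smaller projective dimension, hence by induction is of the form $A\otimes V$, reducing to $\pdim_A M\le 1$; and in a minimal presentation $0\to A\otimes V\xrightarrow{\psi}A\otimes T_0\to M\to 0$ the map $\psi$ is a $\GL(\infty)$-equivariant injection of $A$-modules, which forces $V=0$ --- otherwise $\operatorname{im}\psi$ would be a finitely generated $\GL(\infty)$-stable submodule whose generators acquire Koszul syzygies after evaluating at some $\bC^n$, contradicting injectivity --- as in \cite{koszul} (and \cite{symc1} for $\dim U=1$). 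Then $\pdim_A M=0$, so $M(\bC^n)$ is free, $\pi(n)\equiv 0$, and $d_M(n)=n\dim U$, i.e.\ $b=0$. The main obstacle throughout is the Main Lemma --- the uniform-in-$n$ top-degree/regularity control of the exterior-algebra modules $\mc{T}_m(M)^\vee(\bC^n)$ --- together with the homological input that over $\Sym(U\langle 1 \rangle)$ finite projective dimension implies projectivity; the rest is bookkeeping with Auslander--Buchsbaum and the strand decomposition.
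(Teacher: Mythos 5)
This proposition is stated in \S\ref{sec:announce} as an announcement, with its proof deferred to \cite{koszul}, so there is no in-paper argument to compare against; I assess your proposal on its own terms. The reduction is sensibly organized: Auslander--Buchsbaum over $A(\bC^n)$, exactness of evaluation at $\bC^n$ on minimal free resolutions, the identity $\pi(n)=\max_m\bigl(g_m(n)-m\bigr)$ over the finitely many nonzero strands via Theorem~\pref{thm:fglinearstrands}, and the reduction of the $a=\dim U$ endpoint to ``finite projective dimension over $A$ implies projective.'' One correction on that endpoint: the ``Koszul syzygies'' argument you offer for $\pdim\le 1$ is not valid as stated, since minimal injections of nonzero free graded modules do exist over a polynomial ring (the top nonzero differential of a Koszul complex on a regular sequence, say); the clean way to finish that case is to use that projective objects of $\Mod_A$ are also injective (\S\ref{ss:symc1}, \cite{koszul}), so $\psi$ splits, and reducing the splitting modulo $A_+$ forces $V=0$.

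The substantive gap is your Main Lemma, which concentrates all the difficulty, and the route you sketch for it has an unaddressed problem: the transpose equivalence $\Mod_R\cong\Mod_{A'}$ of Proposition~\pref{prop:transp} does \emph{not} commute with evaluation at $\bC^n$. Transpose sends $\bS_\lambda\mapsto\bS_{\lambda^\dag}$, so the truncation $\ell(\lambda)\le n$ that determines which constituents of $F$ survive in $F(\bC^n)$ corresponds on the $A'$-side to $\lambda_1\le n$, which is not what evaluation at $\bC^n$ detects on $F^\dag$. A support-dimension count on $\Spec A'(\bC^n)$ in the style of Proposition~\pref{prop:rankvar} therefore controls a different truncation of $F^\dag$ than the one governing $g_F(n)$, and does not directly pin down the slope or the eventual constancy of $g_F(n)-an$. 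What boundedness of $A'$ does give you cheaply is $\lambda_1\le\ell(F^\dag)$ for every $\bS_\lambda\subset F$, but that only reproduces an upper bound of the same quality as the one you already have from a finite presentation; the eventual \emph{exact} linearity and the sharp slope range $\{0,\dots,\dim U\}$ remain unestablished. Until the Main Lemma is actually proved, the proposal is a plausible plan rather than a proof.
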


Specialized to $\dim U = 1$, we can also define the depth of $M$ to be the depth of the $A(\bC^{\infty})$-module $M(\bC^{\infty})$, forgetting the $\GL$-action. 

\begin{corollary}
Let $\dim U = 1$ and suppose $M$ is not projective.  Then the depth of $M$ is finite, and agrees with the depth of $M(\bC^n)$ (considered as an $A(\bC^n)$-module) for all $n \gg 0$.
\end{corollary}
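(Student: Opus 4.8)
The plan is to pin down $\depth_{A(\bC^\infty)}M(\bC^\infty)$ by comparing it directly with the numbers $d_M(n)$. Write $R=A(\bC^\infty)=\bC[x_1,x_2,\dots]$ and $R_n=A(\bC^n)=\bC[x_1,\dots,x_n]$, and $N=M(\bC^\infty)$, $N_n=M(\bC^n)$, so that $R=\bigcup_n R_n$ and $N=\varinjlim_n N_n$ with injective transition maps (a functor preserves the split injections $\bC^n\hookrightarrow\bC^{n+1}$). Since $M$ is not projective (so $M\ne 0$), the preceding proposition forces $a=0$: there is a fixed integer $d$ with $d_M(n)=\depth_{R_n}N_n=d$ for all $n\gg 0$, and $d$ is a well-defined finite nonnegative integer because $N_n$ is a nonzero finitely generated graded $R_n$-module for $n\gg 0$. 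It therefore suffices to prove $\depth_R N=d$. Throughout I will use the idempotent endomorphism $e_{n'}$ of $R$ (resp.\ the compatible semilinear endomorphism of $N$) induced, via the functor $\Sym$, by the projection $\bC^\infty\to\bC^{n'}\hookrightarrow\bC^\infty$: it fixes $R_n$, resp.\ $N_n$, pointwise whenever $n\le n'$, and has image exactly $R_{n'}$, resp.\ $N_{n'}$.

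The inequality $\depth_R N\le d$ (which already yields finiteness) I would prove by descending regular sequences. Let $f_1,\dots,f_r\in R_+$ be any $N$-regular sequence; all the $f_i$ lie in some $R_n$. For $n'\ge n$, applying $e_{n'}$ to a relation $v=\sum_{j<i}f_ju_j$ shows that a $v\in N_{n'}$ lying in $(f_1,\dots,f_{i-1})N$ already lies in $(f_1,\dots,f_{i-1})N_{n'}$; hence $f_1,\dots,f_r$ is an $N_{n'}$-regular sequence, and it is proper by graded Nakayama since $N_{n'}$ is a nonzero finitely generated graded $R_{n'}$-module and the $f_i$ lie in $(R_{n'})_+$. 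Thus $r\le\depth_{R_{n'}}N_{n'}=d$ for $n'\gg 0$, so $\depth_R N\le d<\infty$.

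For the reverse inequality I would build an $N$-regular sequence of $d$ linear forms. Fix $n\gg 0$. For each $m\ge n$ one has $\depth_{R_m}N_m=d$, so a generic $d$-tuple of linear forms on $\bC^m$ is $N_m$-regular; moreover being an $N_m$-regular sequence is a Zariski-open condition on the space of $d$-tuples of linear forms (vanishing of the finitely many positive-degree Koszul homologies). The key point I would then establish is that for each fixed $m\ge n$ an $N_m$-regular $d$-tuple can already be chosen inside the subspace $\bC^n\subset\bC^m$ of linear forms; granting this, the set of $N_m$-regular $d$-tuples in $(\bC^n)^{\oplus d}$ is a nonempty Zariski-open subset of $(\bC^n)^{\oplus d}$ for every $m\ge n$, and since $\bC$ is uncountable the intersection of these countably many dense opens is nonempty. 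Picking $\ell_1,\dots,\ell_d\in\bC^n$ in that intersection, the sequence is $N_m$-regular for all $m\ge n$ simultaneously. Using the $e_m$ once more one checks that $N_m/(\ell_1,\dots,\ell_i)N_m\hookrightarrow N_{m+1}/(\ell_1,\dots,\ell_i)N_{m+1}$ and that $N/(\ell_1,\dots,\ell_i)N=\varinjlim_m N_m/(\ell_1,\dots,\ell_i)N_m$, so multiplication by $\ell_{i+1}$ is injective on $N/(\ell_1,\dots,\ell_i)N$ as a filtered colimit of injections, while $N/(\ell_1,\dots,\ell_d)N\ne 0$ since $N$ is bounded below. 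Hence $\ell_1,\dots,\ell_d$ is $N$-regular and $\depth_R N\ge d$.

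The main obstacle is exactly the highlighted claim in the previous paragraph: that an $N_m$-regular sequence of $d=\depth_{R_m}N_m$ linear forms can be taken inside a fixed subspace $\bC^n$ with $n$ large but independent of $m$ --- equivalently, that $\bC^n$ (as a space of linear forms) is not swallowed by any of the finitely many associated primes of $N_m$ obstructing one of the $d$ steps, i.e.\ that $\depth_{(x_1,\dots,x_n)R_m}N_m=d$. I would attack this using the explicit classification of finitely generated $\Sym(\bC\langle 1\rangle)$-modules from \cite{symc1} (where the $\dim U=1$ case of the preceding proposition is itself proved), which should give enough control of the modules $M(\bC^m)$, and of how a generic linear form in $\bC^n$ acts on them as $m$ grows, to supply the needed genericity uniformly in $m$. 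An equivalent repackaging that isolates the same difficulty is to compute the local cohomology $H^i_{R_+}(N)=\varinjlim_n H^i_{(x_1,\dots,x_n)R}(N)$ and show it vanishes for $i<d$ and is nonzero for $i=d$.
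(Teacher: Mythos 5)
Since the corollary appears in the ``Announcement of new results'' section, the paper supplies no proof to compare against (the result is attributed to \cite{koszul} and \cite{symc1}); I therefore evaluate the proposal on its own terms. The overall plan and the upper-bound half are correct: the idempotents $e_{n'}$ do show that any $N$-regular sequence in $R_+$ of length $r$ descends to an $N_{n'}$-regular sequence for $n'\gg 0$, so $r\le d$ and $\depth_R N\le d<\infty$.

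The lower-bound half contains the gap you flag, and it is a genuine gap as written. It is, however, easier to close than you suggest, by exploiting the $\GL(m)$-equivariance of $N_m$, which your argument never invokes. For $m$ large, the collection of $N_m$-regular $d$-tuples of linear forms in $(\bC^m)^{\oplus d}$ is nonempty (by prime avoidance, since $\depth_{R_m}N_m=d$) and is $\GL(m)$-stable: any $g\in\GL(m)$ acts semilinearly on $N_m$, hence carries $N_m/(\ell_1,\dots,\ell_i)N_m$ isomorphically to $N_m/(g\ell_1,\dots,g\ell_i)N_m$ while intertwining multiplication by $\ell_{i+1}$ with multiplication by $g\ell_{i+1}$. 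Every such regular tuple is automatically linearly independent, since otherwise $\ell_d$ would lie in $(\ell_1,\dots,\ell_{d-1})$ and hence act on $N_m/(\ell_1,\dots,\ell_{d-1})N_m$ both injectively and by zero; that module is nonzero by graded Nakayama, a contradiction. Since $\GL(m)$ acts transitively on linearly independent $d$-tuples once $m\ge d$, the collection of regular tuples is exactly the $\GL(m)$-orbit of $(x_1,\dots,x_d)$; in particular $(x_1,\dots,x_d)$ is $N_m$-regular for every $m\ge\max(d,n_0)$, where $n_0$ is the threshold from the preceding Proposition. This supplies the required $d$-tuple inside $\bC^n$ simultaneously for every $m$, eliminating both the appeal to the structure theory of \cite{symc1} and the uncountability/Baire-style intersection of dense opens. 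Your filtered-colimit argument (via the $e_m$) then shows $(x_1,\dots,x_d)$ is $N$-regular, giving $\depth_R N\ge d$ and completing the proof.
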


\article
A similar result holds for the Krull dimension of $A$-modules. Since the Krull dimension of a module is defined to be the Krull dimension of its support algebra, we state the result for quotients of $A$. Let $B$ be a quotient of $A$ and define $\dim_B(n)$ to be the Krull dimension of $B(\bC^n)$.

\begin{proposition}
Let $B$ be a quotient of $A$. Then there exist integers $0 \le a \le \dim U$ and $0 \le b \le (\dim U - a)a$ such that $\dim_B(n) = an+b$ for $n \gg 0$.

Now assume that $B$ is a domain. If $b=0$, then $B = \Sym(U'\langle 1 \rangle)$ for some $a$-dimensional quotient $U'$ of $U$. If $b=(\dim U - a)a$, then $B$ is the determinantal variety of rank $\le a$ matrices.
\end{proposition}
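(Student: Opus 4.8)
\emph{Reduction to the case of a domain.} Since $A=\Sym(U\langle 1\rangle)$ is a noetherian bounded tca (Proposition~\pref{cor:noeth}), so is its quotient $B$; in particular $B$ has only finitely many minimal primes $\fp_1,\dots,\fp_r$, each $\GL(\infty)$-stable with $B/\fp_i$ a domain. For $n\ge\ell(B)$ the minimal primes of $B(\bC^n)$ are exactly the $\fp_i(\bC^n)$ (Proposition~\pref{ind:5}), so $\dim_B(n)=\dim B(\bC^n)=\max_i\dim(B/\fp_i)(\bC^n)=\max_i\dim_{B/\fp_i}(n)$ for $n\gg 0$. If the proposition holds for each domain $B/\fp_i$, with $\dim_{B/\fp_i}(n)=a_in+b_i$ for $n\gg 0$ and $0\le a_i\le\dim U$, $0\le b_i\le(\dim U-a_i)a_i$, then setting $a=\max_i a_i$ and $b=\max\{b_i:a_i=a\}$ yields $\dim_B(n)=an+b$ for $n\gg 0$, and the index achieving the maximum certifies $b\le(\dim U-a)a$. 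So I may assume $B$ is a domain. Write $d=\dim U$ and $X_n=\Spec B(\bC^n)\subseteq\Spec A(\bC^n)=U^*\otimes(\bC^n)^*$, a space of $d\times n$ matrices carrying the action of $\GL(n)$ by column operations.

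\emph{The dimension formula.} For $n\ge d\ge\ell(B)$, $X_n$ is a $\GL(n)$-stable closed subvariety (Corollary~\pref{cor:boundedideals}), and it is irreducible because $B$ is a domain (Proposition~\pref{ind:4}). Let $a=a(n)$ be the maximal matrix rank attained on $X_n$; then $X_n$ lies in the determinantal locus $\{\rank\le a\}$ and its rank-$a$ stratum $X_n^\circ$ is dense open in $X_n$. Sending a rank-$a$ matrix to its column space in $U^*$ defines a $\GL(n)$-equivariant morphism $\pi\colon X_n^\circ\to\Gr(a,U^*)$ whose nonempty fibres are single $\GL(n)$-orbits, each of dimension $an$ (an orbit is the set of surjections $\bC^n\to W$ for a fixed $a$-plane $W$). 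Letting $Y=Y(n)\subseteq\Gr(a,U^*)$ be the closure of the image of $\pi$, the theorem on fibre dimension gives $\dim X_n=an+\dim Y$, with $0\le a\le d$ and $0\le\dim Y\le\dim\Gr(a,U^*)=a(d-a)$. It remains to see that $a(n)$ and $Y(n)$ are eventually constant. The surjections $\bC^m\twoheadrightarrow\bC^n$ for $m\ge n$ induce, by tca functoriality, closed immersions $X_n\hookrightarrow X_m$ that preserve ranks and column spaces; hence $a(n)$ and $Y(n)$ are non-decreasing in $n$ and stabilize. Moreover a rank-$a$ matrix in $X_n$ can be moved by $\GL(n)$ into its first $a\le d$ columns, which shows that the stable values of $a$ and $Y$ are already reached at $n=d$. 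Thus $\dim_B(n)=an+b$ for all $n\ge d$ with $b=\dim Y\in[0,(d-a)a]$, proving the first assertion.

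\emph{The extremal cases.} Suppose $b=0$. Then $Y$ is a single point $\{W_0\}$, so for $n\ge d$ every rank-$a$ matrix in $X_n$ has column space $W_0$; taking closures, $X_n\subseteq\{M:\im(M)\subseteq W_0\}=W_0\otimes(\bC^n)^*$, a linear subspace of dimension $an=\dim X_n$, forcing equality. Hence the ideal $\fa(\bC^n)$ of $X_n$ is generated by $W_0^{\perp}\otimes\bC^n$ and $B(\bC^n)=\Sym((U/W_0^{\perp})\otimes\bC^n)$; by the injectivity of $\fa\mapsto\fa(\bC^n)$ on ideals of $A$ (Corollary~\pref{cor:boundedideals}) this identifies $B$, as a tca, with $\Sym(U'\langle 1\rangle)$ for the $a$-dimensional quotient $U'=U/W_0^{\perp}$ of $U$. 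Now suppose $b=(d-a)a$. Then $Y$, an irreducible closed subvariety of full dimension, equals all of $\Gr(a,U^*)$; consequently $X_n$ is an irreducible subvariety of the (irreducible) variety $D_a$ of matrices of rank $\le a$, and $\dim X_n=an+a(d-a)=\dim D_a$ forces $X_n=D_a$ for every $n\ge d$. As before, this identifies $B$ with the coordinate ring of the rank-$\le a$ determinantal variety, which is a quotient tca of $A$ (see \pref{ss:detlvar}).

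\emph{The main obstacle.} I expect the crux to be the stabilization step: that the generic rank $a(n)$ and the Grassmannian-image $Y(n)$ are constant for $n\ge d$. Monotonicity is immediate from the closed immersions $X_n\hookrightarrow X_m$, but locating the threshold at $n=d$ needs the ``move the rank into $d$ columns'' argument carried out compatibly with the tca structure, so that one genuinely recovers $B$ and not merely its evaluations $B(\bC^n)$. A second, more routine but error-prone, point is the dictionary ``$\GL(n)$-stable irreducible subvariety $\leftrightarrow$ (generic rank, subvariety of the Grassmannian of column spaces)''---in particular checking that $\pi$ is a morphism and that its fibres are exactly the column-space orbits---together with the $U$-versus-$U^*$ bookkeeping needed to phrase the conclusion via a quotient $U'$ of $U$.
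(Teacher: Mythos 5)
This proposition sits in the ``Announcement of new results'' part of the paper (\S\ref{sec:announce}), where the authors explicitly defer proofs to \cite{koszul}; the paper gives no proof of this statement, so there is nothing to compare your argument against line by line.

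That said, your proof is correct and, I believe, essentially the argument one would expect. The reduction to a domain via minimal primes and Proposition~\pref{ind:5} is sound, and the computation $\dim X_n = a n + \dim Y(n)$ via the $\GL(n)$-equivariant column-space morphism $\pi\colon X_n^\circ \to \Gr(a,U^*)$, with fibres equal to orbits of surjections $\bC^n \twoheadrightarrow W$ of dimension $an$, is the right mechanism. The stabilization at $n = d = \dim U$ is correctly handled: the closed immersions $X_n \hookrightarrow X_m$ (pad by zero columns, via a surjection $\bC^m \twoheadrightarrow \bC^n$) give monotonicity, while moving a rank-$a$ matrix by $\GL(n)$ so that its nonzero columns sit among the first $d$ and then applying the restriction map $X_n \to X_d$ gives the reverse containment of both rank and image, so $a(n)$ and $Y(n)$ are constant for $n \ge d$. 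The extremal-case arguments (dimension count in an irreducible ambient, followed by the bijection $\fa \mapsto \fa(\bC^n)$ of Corollary~\pref{cor:boundedideals} for $n \ge \ell(A)$ to pass from the evaluated ideal back to the tca-ideal) are also fine. Two very small things you might make explicit in a write-up: (i) $\dim(\im\pi) = \dim Y$ uses that the image is a constructible irreducible set, so its dimension equals that of its closure; and (ii) in the $b=0$ case the conclusion $B(\bC^n)$ reduced uses that $B$ is a domain, which you should invoke before identifying the ideal of $X_n$ with $(W_0^\perp \otimes \bC^n)$.
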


\subsection{Structure theory of $\Sym(\bC\langle 1 \rangle)$-modules} \label{ss:symc1}

\noarticle

The results of this section are from \cite{symc1}.  Let $A=\Sym(\bC\langle 1 \rangle)$ and let $\Mod_A$ be the category of finitely generated $A$-modules.  Denote by $\Mod_A^{\tors}$ the category of torsion (or, equivalently, finite length) objects in $\Mod_A$ and by $\Mod_K$ the Serre quotient of $\Mod_A$ by $\Mod_A^{\tors}$.  We think of $A$ as analogous to $\bC[t]$, and so $\Mod_K$ should be thought of as modules over the ``generic point'' of $A$.  (Note that the Serre quotient $\Mod_{\bC[t]}/\Mod_{\bC[t]}^{\tors}$ is the category of vector spaces over $\bC(t)$.)  One can describe $\Mod_K$ more concretely:  it is a certain category of semi-linear representations of $\GL(\infty)$ on vector spaces over the fraction field of $A(\bC^{\infty})$ (see \pref{modK}).

We prove the following results about $\Mod_K$:
\begin{itemize}
\item $\Mod_K$ is equivalent to $\Mod_A^\tors$.
\item Every object has finite length.
\item The injective objects are the images of $A$-modules of the form $A \otimes \bS_{\lambda}$.
\item The simple objects are labeled by partitions $\lambda$. Each simple object $L_\lambda$ occurs as the socle of a unique $A \otimes \bS_{\lambda}$, and all other simple objects $L_\mu$ in the composition series of $A \otimes \bS_\lambda$ satisfy $\mu \subset \lambda$. In particular, $\Mod_K$ is a highest weight category \cite{hwcat}.
\item Every object has finite injective dimension.  In fact, we construct explicit injective resolutions of the simple objects.
\item We give an explicit description of $\Mod_K$ as the category of representations of a locally finite quiver with relations.
\item The Grothendieck group of $\Mod_K$ is free of rank 1 as a module over $K(\cV_{\fin})$.
\end{itemize}

The above results imply certain results about the structure of $\Mod_A$.  In particular, we find that its Grothendieck group is free of rank 2 over $K(\cV_{\fin})$, with the free module $[A]$ and the simple module $[\bC]$ forming a basis.  Using these structural results, we give quick proofs of our results on Hilbert series and Koszul duality discussed above (in the case of $\Sym(\bC\langle 1 \rangle)$-modules).

Furthermore, we prove the following properties about $\Mod_A$:
\begin{itemize}
\item The projective objects of $\Mod_A$ are also injective.
\item Every object has finite injective dimension. Moreover, each finitely generated $A$-module has a finite injective resolution by finitely generated injective objects.
\item All finitely generated $A$-modules $M$ have finite regularity and the linear strands of a minimal projective resolution of $M$ are finitely generated over the Koszul dual exterior algebra $\bigwedge \bC^\infty$.
\end{itemize}

Furthermore, the localization functor $\Mod_A \to \Mod_K$ has a right adjoint which allows us to define a local cohomology theory for objects of $\Mod_A$. We also explicitly calculate the local cohomology for lifts $\tilde{L}_\lambda$ of the simple objects $L_\lambda$ of $\Mod_K$. The Hilbert series of $\tilde{L}_\lambda$ gives an algebraic model of the character polynomials \cite[Example I.7.14]{macdonald}, and its local cohomology gives a homological model for the behavior of character polynomials at small parameters.

\subsection{Representation theory of infinite rank groups} \label{ss:infrank}

\noarticle
The results of this section are from \cite{reptheory}.  As we have seen, $\cV$ can be described as the category of polynomial representations of $\GL(\infty)$.  It is natural to ask if one can enlarge this category to include rational representations of $\GL(\infty)$, or if there are good categories of representations of other infinite rank reductive groups like $\bO(\infty)$ or $\Sp(\infty)$.  In fact, these categories do exist, alongside many other examples. These categories were studied, for example, in \cite{penkovstyrkas},  \cite{koszulcategory}, and \cite{olshanskii}. For expository purposes, we focus on $\bO(\infty)$.  The basic phenomena here are present in other cases as well.

Let $\Rep(\bO)$ denote the category of ``algebraic'' representations of $\bO(\infty)$, i.e., those which appear as a subquotient of a finite direct sum of tensor powers of the standard representation $\bC^{\infty}$.  The most prominent difference between $\Rep(\bO)$ and $\cV$ is that the former is not semi-simple.  A simple example of a non-split surjection is given by the quadratic form $\Sym^2(\bC^{\infty}) \to \bC$.  Now, one may think that the failure for this to split is somewhat artificial:  the quadratic form should correspond to an element of $\Sym^2(\bC^{\infty})$ which is some kind of infinite sum.  In fact, one can work with pro-finite spaces and then $\Sym^2(\bC^{\infty})$ does have an invariant; however, the map $\Sym^2(\bC^{\infty}) \to \bC$ can no longer be defined, as one might need to add up infinitely many numbers. This shows that there are two candidates for $\Rep(\bO)$, one using ``ind-finite'' spaces and one using ``pro-finite'' spaces.  In fact, the two are opposite categories of each other, so it is enough to study one of them.  We stick to the ``ind-finite'' version.

Polynomial representations of $\GL(d)$ can be described using the symmetric group via Schur--Weyl duality.  There is a similar theory for the orthogonal group $\bO(d)$, where the Brauer algebra replaces the symmetric group \cite{brauer}.  Let us recall the definition of this algebra.  Let $n$ be a non-negative integer and let $t$ be a parameter.  Elements of the Brauer algebra, denoted $\Br_n(t)$, are formal linear combinations of matchings on $2n$ points, which are represented as two rows of $n$ points.  The product of two graphs is formed by placing one above the other, removing all loops and multiplying by $t^k$, where $k$ is the number of loops removed.  The symmetric group sits inside the Brauer algebra as the matchings which are bipartite (every edge has one vertex in the top row and one in the bottom row).

Equip $\bC^d$ with the standard symmetric inner product.  We define an action of $\Br_n(d)$ on $(\bC^d)^{\otimes n}$ as follows.  An edge between vertices in the top row pairs the corresponding vectors using the inner product; an edge between two vertices in the bottom row inserts a copy of the form into those two tensor factors; and edges going from the top row to the bottom row simply move tensor factors around.  It is important here that the parameter in the Brauer algebra equals the dimension of $\bC^d$, as the norm of the pairing is $d$.  Now, it is clear that the actions of $\Br_n(d)$ and $\bO(d)$ on $(\bC^d)^{\otimes n}$ centralize each other.  For $d$ large compared to $n$, the Brauer algebra is semi-simple and the multiplicity spaces form irreducible representations of the orthogonal group.

It is tempting to try to carry out these constructions for $d=\infty$.  However, one runs into two closely related problems:  first, one cannot set the parameter in the Brauer algebra to $\infty$ in a meaningful way; and second, the action of the Brauer algebra on $(\bC^d)^{\otimes n}$ makes use of both the maps $\Sym^2(\bC^d) \to \bC$ and $\bC \to \Sym^2(\bC^d)$, whereas in $\Rep(\bO)$ we only have the former map.  The solution to these problems is to simply use the piece of the Brauer algebra which does carry over.  Precisely, we define the {\bf downwards Brauer category}, denoted $\db$, to be the category whose objects are finite sets and where a morphism $L \to L'$ is a pair $(f, \Gamma)$, where $\Gamma$ is a (possibly non-perfect) matching on $L$ and $f$ is bijection of $L \setminus V(\Gamma)$ with $L'$.  To connect this with $\Br_n(t)$, one should think of $L$ as the top row of vertices, $L'$ as the bottom row of vertices, $\Gamma$ as the edges in the top row and $f$ as edges between the two rows; there are no edges in the bottom row.  The purpose of this definition is that the rule $L \mapsto (\bC^{\infty})^{\otimes L}$ defines an object $K$ of $\Vec^{\db}$:  functoriality is obtained using the pairing.  Of course, the object $K$ also carries an action of $\bO(\infty)$.  We thus obtain a contravariant functor
\begin{align}
\Vec^{\db} \to \Rep(\bO), \qquad M \mapsto \Hom_{\db}(M, K).
\end{align}
We show that this functor is an anti-equivalence.  This provides a perfect analogue of Schur--Weyl duality for $\bO(\infty)$.  We note that there is an ``upwards'' Brauer category $\ub$, and $\Vec^{\ub}$ is opposite to $\Vec^{\db}$.  Thus the above result can be rephrased as an equivalence $\Vec^{\ub} \to \Rep(\bO)$.

Although we motivated the study of $\Rep(\bO)$ as a category analogous to $\cV$ which might harbor analogues of tca's, it turns out that $\Rep(\bO)$ can be described in terms of tca's!  Precisely, $\Rep(\bO)$ is equivalent to the category of finite length modules over the tca $\Sym(\bC\langle 2\rangle)$ (though the tensor product is not the usual one).  In fact, it is just a matter of unraveling definitions to see that this module category is equivalent to $\Vec^{\ub}$.  This result is interesting for at least two reasons:  (1) it gives a description of representations of $\bO(\infty)$ in terms of polynomial representations of $\GL(\infty)$; and (2) to our knowledge, it is the first occurrence of an unbounded tca ``in nature.''

All of this is just the beginning of the story.  The most important results we establish give categorical interpretations to earlier results of Koike and Terada on universal character rings \cite{koiketerada}; for instance, we construct and study a specialization functor $\Rep(\bO) \to \Rep(\bO(d))$ which categorifies the Koike--Terada specialization homomorphisms. These specialization functors are left exact (using the ``pro-finite'' model, they would be right exact), and their derived functors applied to simple objects are nonzero in at most one homological degree. This is the main result of \cite{ssw}, which also explicit calculates these derived functors. This gives a refined version of the ``modification rules'' of \cite{koiketerada}. Furthermore, these modification rules can be phrased in terms of a Weyl group dot action, so one can view these results as an analogue of Bott's theorem \cite[Chapter 4]{weyman} for the cohomology of homogeneous bundles on projective homogeneous spaces.

\addtocontents{toc}{\vskip.6\baselineskip}

\end{document}